\newcolumntype{L}{>{$}l<{$}} 
\newcolumntype{C}{>{$}c<{$}}
\newtheorem{theorem}{Theorem}[section]
\newtheorem{lemma}[theorem]{Lemma}
\newtheorem{cor}[theorem]{Corollary}
\newtheorem{prop}[theorem]{Proposition}
\newtheorem{setup}[theorem]{Setup}
\theoremstyle{definition}
\newtheorem{definition}[theorem]{Definition}
\newtheorem{example}[theorem]{Example}
\newtheorem{obs}[theorem]{Observation}
\newtheorem{notation}[theorem]{Notation}
\theoremstyle{remark}
\newtheorem{remark}[theorem]{Remark}
\newtheorem{the context}[theorem]{The Context}
\newtheorem{question}[theorem]{Question}
\numberwithin{equation}{theorem}
\numberwithin{equation}{section}
\newcommand{\cat}[1]{\mathcal{#1}}
\newcommand{\tor}{\operatorname{Tor}}
\newcommand{\proj}{\operatorname{Proj}}
\newcommand{\Ker}{\operatorname{Ker}}
\newcommand{\ideal}[1]{\mathfrak{#1}}
\newcommand{\m}{\ideal{m}}
\newcommand{\supp}{\operatorname{Supp}}
\newcommand{\bbz}{\mathbb{Z}}
\newcommand{\bbn}{\mathbb{N}}
\renewcommand{\geq}{\geqslant}
\renewcommand{\leq}{\leqslant}
\renewcommand{\ker}{\Ker}
\newcommand{\st}{\operatorname{Star}}
\newcommand{\maps}[5]{\xymatrix{#1 \ar[r]^-{#3} & #2 \\
#4 \ar@{|->}[r] & #5 \\}}
\newcommand{\mfa}{\mathfrak{a}}
\newcommand{\lcm}{\textrm{lcm}}
\def\w{\wedge}
\def\proj{\operatorname{proj}}
\newcommand{\dstar}{\operatorname{**}}
\newcommand{\sq}{\operatorname{sq}}
\newcommand{\po}{\operatorname{po}}
\begin{document}
\title{On Constructions Related to the Generalized Taylor Complex}

\author{Keller VandeBogert }
\date{\today}

\maketitle

\begin{abstract}
    In this paper, we extend constructions and results for the Taylor complex to the generalized Taylor complex constructed by Herzog. We construct an explicit DG-algebra structure on the generalized Taylor complex and extend a result of Katth\"an on quotients of the Taylor complex by DG-ideals. We introduce a generalization of the Scarf complex for families of monomial ideals, and show that this complex is always a direct summand of the minimal free resolution of the sum of these ideals. We also give an example of an ideal where the generalized Scarf complex strictly contains the standard Scarf complex. Moreover, we introduce the notion of quasitransverse monomial ideals, and prove a list of results relating to Golodness, Koszul homology, and other homological properties for such ideals.
\end{abstract}

\section{Introduction}

In $1966$, Taylor \cite{taylor66} constructed a multigraded free resolution (now called the Taylor resolution) for all monomial ideals that is structurally analogous to the well-known Koszul complex. This complex has become a main theme in a long story on the characterization of the homological properties of monomial ideals, and is the basis of many common constructions and results relating to monomial ideals. 

As it has turned out, a closed form for the minimal free resolution of an arbitrary monomial ideal (a question posed by Kaplansky) is highly nontrivial. Many properties possessed by the Taylor resolution are now well-known to \emph{not} hold for minimal free resolutions of arbitrary monomial ideals. For instance, the frame of the Taylor resolution has differentials with coefficients $0$ or $\pm 1$, but Reiner and Welker \cite{reiner2001linear} constructed a monomial ideal for which no choice of basis for the minimal free resolution had coefficients $0$ or $\pm 1$. Likewise, the Taylor resolution does not depend on the characteristic of the underlying field, but there exist monomial ideals whose Betti numbers \emph{do} depend on the characteristic (see \cite[Example 12.4]{peeva2010graded}). Moreover, in \cite{bayer1998monomial}, the authors observe that the Taylor resolution is supported on a simplicial complex, but work of Velasco \cite{velasco2008minimal} has shown that there exist monomial ideals with minimal free resolution that cannot be supported even on a CW-complex.

Despite the above counterexamples, there have been many recent positive results on free resolutions of monomial ideals (obviously, we cannot name all of them here). In \cite{herzog2007generalization}, Herzog constructs a generalized Taylor complex and uses this resolution to prove that sums and intersections of ideals satisfy previously conjectured bounds for the regularity and projective dimension. Clark and Tchernev \cite{clark2019minimal} show that the minimal free resolution of any monomial ideal may be supported on a poset. Finally, Eagon, Miller, and Oordog \cite{eagon2019minimal} have recently presented an essentially complete solution to Kaplansky's question (with some outlier cases in small characteristic).

The purpose of the following paper is to extend results and constructions involving the Taylor resolution to the previously mentioned generalized Taylor resolution, introduced by Herzog. We first observe that one can truncate a particular direct summand of the generalized Taylor complex to obtain a free resolution of intersections of monomial ideals. We then introduce so-called quasitransverse families of monomial ideals, which are families of monomial ideals for which the generalized Taylor complex yields a minimal free resolution; in particular, a common theme in this paper is that many properties possessed by quasitransverse families of ideals are inherited by sums and intersections of these ideals (for instance, properties relating to Koszul homology/Golodness).

We show that the generalized Taylor complex may be given the structure of a DG-algebra that directly generalizes the well-known algebra structure on the classical Taylor resolution, and extend a result of Katth\"an related to quotients of the Taylor resolution by DG ideals. We construct a generalization of the Scarf complex and prove that the generalized Scarf complex is always a direct summand of the appropriate minimal free resolution. Moreover, we present an example where the generalized Scarf complex strictly contains the standard Scarf complex, suggesting that there exist ``quasiscarf" families of ideals that are not necessarily Scarf ideals. 

The paper is organized as follows. In Section \ref{sec:background}, we establish conventions and notation that will be used throughout the paper. This includes a brief introduction to the obstructions to algebra structures as introduced by Avramov in \cite{avramov1981obstructions}, which we will use in Section \ref{sec:obstructions}. In Section \ref{sec:genTaylor}, we recall Herzog's construction of the generalized Taylor complex, and observe that one can use a variant of the generalized Taylor complex to resolve intersections of monomial ideals. We also introduce quasitransverse monomial ideals, and show the aforementioned variant of the generalized Taylor resolution provides a minimal free resolution of the quotient defined by the intersection of quasitransverse ideals.

In Section \ref{sec:DGAonTaylor}, we prove that the generalized Taylor complex admits the structure of a multigraded DG-algebra (assuming the input data have algebra structures) that directly generalizes the well-known algebra structure on the classical Taylor complex. We use this to extend a result of Katth\"an by showing that if the minimal free resolution of a sum of squarefree monomial ideals admits the structure of a DG-algebra, then it can be obtained as a DG-quotient of the associated generalized Taylor complex. As an application, we reprove that the minimal free resolution of the quotient defined by the ideal $(x_1 x_2 , x_2 x_3, x_3x_4,x_4x_5,x_5x_6)$ does not admit the structure of an associative DG-algebra. 

In Section \ref{sec:genScarfComplex}, we introduce a generalization of the Scarf complex (introduced originally by Scarf \cite[2.8]{scarf1973computation}, and in the context of resolutions in \cite{bayer1998monomial}). The construction of this complex is directly analogous to the standard Scarf complex, but there are some additional subtleties to be conscious of in the more general setting. In particular, the generalized Scarf complex associated to a family of monomial ideals $I_1 , \dots , I_r$ is constructed as a subcomplex of the associated generalized Taylor resolution, and is always a direct summand of the minimal free resolution of $R/(I_1+ \cdots + I_r)$. We use this to define \emph{quasiscarf} families of ideals, and give an explicit example where the generalized Scarf complex strictly contains the standard Scarf complex.

In Section \ref{sec:KoszulHom}, we study the Koszul homology of sums and intersections of quasitransverse families of ideals. Using a result of Herzog and Steurich, we prove that the intersection of any quasitransverse family of ideals always defines a Golod ring. In the case that the family of quasitransverse ideals is squarefree, we compute the Koszul homology for both the sum and intersection explicitly; in particular, this allows us to compute an explicit trivial Massey operation on the associated Koszul complex. By Golod's construction, this yield a minimal free resolution of the residue field over the quotient defined by any intersection of a fmaily of quasitransverse monomial ideals.

In Section \ref{sec:obstructions}, we prove that the Avramov obstructions associated to quotients defined by intersections of quasitransverse families of monomial ideals are always trivial. We also use an algebra structure constructed by Geller in \cite{geller2021DG} to prove that the complex used for resolving intersections of monomial ideals defined in Section \ref{sec:genTaylor} admits the structure of an associative algebra structure under suitable hypotheses. Finally, after combining the above vanishing result with the Golodness previously established in Section \ref{sec:KoszulHom}, we prove that certain maps of Tor modules associated to quotients of intersections of quasitransverse families are always injective. We conclude the paper with further discussion and questions. 

\section{Background on Algebras, Golod Rings, and Avramov Obstructions}\label{sec:background}

In this section, we recall some background material that will appear throughout the paper. We first establish notation relating to resolutions and posets associated to resolutions. We then recall the definition of DG-algebras and Golod rings, and conclude with an introduction to the obstructions introduced by Avramov in \cite{avramov1981obstructions}.

Throughout the paper, all complexes will be assumed to have nontrivial terms appearing only in nonnegative homological degrees.

\begin{notation}
The notation $(F_\bullet , d_\bullet)$ will denote a complex $F_\bullet$ with differentials $d_\bullet$. When no confusion may occur, $F$ may be written, where the notation $d^F$ is understood to mean the differential of $F$ (in the appropriate homological degree). The notation $|f|$ will denote the homological degree of any $f \in F$.
\end{notation}

\begin{definition}
    Let $F$ be a free $R$-module with some fixed basis $B \subset F$. The \emph{support} of an element $f = \sum_{b \in B} c_b b \in F$ with respect to the basis $B$ is defined to be:
    $$\supp_B (f) := \{ b \in B \mid c_b \neq 0 \},$$
    where $c_b \in R$. When the basis $B$ is understood, the notation $\supp (f)$ will be used instead.
\end{definition}

\begin{definition}
    Let $F_\bullet$ be a complex with a fixed basis $B_i$ in each homological degree $i$. The associated poset $\po (F_\bullet)$ is defined as follows: given basis elements $e \in F_i$, $f \in F_{i-1}$,
    $$f \lessdot e \iff f \in \supp_{B_{i-1}} (d^F (f)),$$
    where $\lessdot$ denotes the covering relation. Extending $\lessdot$ transitively yields a partial order on basis elements appearing in all homological degrees.
\end{definition}

For a much more thorough treatment of DG-algebras and Golod rings, see \cite{avramov1998infinite}. We will only need a few definitions for our purposes.

\begin{definition}\label{def:dga}
A \emph{differential graded algebra} $(F,d)$ (DG-algebra) over a commutative Noetherian ring $R$ is a complex of finitely generated free $R$-modules with differential $d$ and with a unitary, associative multiplication $F \otimes_R F \to F$ satisfying
\begin{enumerate}[(a)]
    \item $F_i F_j \subseteq F_{i+j}$,
    \item $d_{i+j} (x_i x_j) = d_i (x_i) x_j + (-1)^i x_i d_j (x_j)$,
    \item $x_i x_j = (-1)^{ij} x_j x_i$, and
    \item $x_i^2 = 0$ if $i$ is odd,
\end{enumerate}
where $x_k \in F_k$.
\end{definition}

In the following definition, assume that $R$ is a local ring or a standard graded polynomial ring over a field $k$.

\begin{definition}
Let $A$ be a DG $R$-algebra with $H_0 (A) \cong k$. Then $A$ admits a \emph{trivial Massey operation} if for some $k$-basis $\mathcal{B} = \{ h_\lambda \}_{\lambda \in \Lambda}$, there exists a function
$$\mu : \coprod_{i=1}^\infty \cat{B}^i \to A$$
such that
\begingroup\allowdisplaybreaks
\begin{align*}
    &\mu ( h_\lambda) = z_\lambda \quad \textrm{with} \quad [z_\lambda] = h_\lambda, \ \textrm{and} \\
    &d \mu (h_{\lambda_1} , \dots , h_{\lambda_p} ) = \sum_{j=1}^{p-1} \overline{\mu (h_{\lambda_1} , \dots , h_{\lambda_j})} \mu (h_{\lambda_{j+1}} , \dots , h_{\lambda_p}). 
\end{align*}
\endgroup
\end{definition}

Observe that taking $p=2$ in the above definition yields that $H_{\geq 1} (A)^2 = 0$, so the induced algebra structure on $H(A)$ is trivial for a DG-algebra admitting a trivial Massey operation.

\begin{definition}\label{def:Golod}
Let $(R,\m)$ be a local ring and let $K^R$ denote the Koszul complex on the generators of $\m$. If $K^R$ admits a trivial Massey operation $\mu$, then $R$ is called a \emph{Golod ring}. An ideal $I$ in some ring $Q$ will be called \emph{Golod} if the quotient $Q/I$ is Golod.
\end{definition}

Next, we introduce the Avramov obstructions. In all of the following statements, one can instead replace $R$ with a standard graded polynomial ring over a field $k$ and work in the homogeneous setting.

\begin{definition}\label{def:avramovobstructions}
Let $(R, \m ,k )$ denote a local ring and $f : R \to S$ a morphism of rings. Let $\tor_+^R (S , k)$ denote the subalgebra of $\tor_\bullet^R (S , k)$ generated in positive homological degree. For any $S$-module $M$, there exists a map of graded vector spaces:
$$\frac{\tor_\bullet^R (M , k)}{\tor_+^R (S,k) \cdot \tor_\bullet^R (M , k)} \to \tor_\bullet^S (M , k).$$
The kernel of this map is denoted $o^f (M)$ and is called the \emph{Avramov obstruction}.
\end{definition}
The following Theorem makes clear why $o^f (M)$ is referred to as an obstruction.

\begin{theorem}[\cite{avramov1981obstructions}, Theorem 1.2]\label{thm:obstructions}
Let $(R, \m ,k )$ denote a local ring and $f : R \to S$ a morphism of rings. Assume that the minimal $R$-free resolution $F_\bullet$ of $S$ admits the structure of an associative DG-algebra. If $o^f (M) \neq 0$, then no DG $F_\bullet$-module structure exists on the minimal $R$-free resolution of $M$.
\end{theorem}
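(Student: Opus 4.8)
The plan is to prove the contrapositive: assuming a DG $F_\bullet$-module structure exists on the minimal $R$-free resolution $G_\bullet$ of $M$, I will show $o^f(M) = 0$, i.e.\ that the natural map $\tor_\bullet^R(M,k)/\bigl(\tor_+^R(S,k)\cdot\tor_\bullet^R(M,k)\bigr) \to \tor_\bullet^S(M,k)$ is injective. The starting point is the standard identification $\tor_\bullet^R(S,k) = \HH_\bullet(F_\bullet \otimes_R k)$ and $\tor_\bullet^R(M,k) = \HH_\bullet(G_\bullet \otimes_R k)$, where the first is a graded $k$-algebra and the second is a graded module over it via the DG $F_\bullet$-module structure on $G_\bullet$ (reduced mod $\m$). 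Likewise $\tor_\bullet^S(M,k) = \HH_\bullet(G_\bullet \otimes_R S \otimes_S k)$; the point is that $G_\bullet \otimes_R S$ is a (generally nonminimal) $S$-free complex with $\HH_0 = M$, so after choosing a comparison map to a minimal $S$-free resolution of $M$ one gets the map in question on homology. Because $G_\bullet$ is $R$-minimal, its differential lands in $\m G_\bullet$, so $\tor_\bullet^R(M,k) = G_\bullet \otimes_R k$ with zero differential; this makes the algebra action very concrete.

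The key step is to produce an explicit chain-level description of the map $\tor_\bullet^R(M,k) \to \tor_\bullet^S(M,k)$ and then identify its kernel with exactly $\tor_+^R(S,k)\cdot \tor_\bullet^R(M,k)$. First I would recall that, since $F_\bullet$ is a minimal $R$-free resolution of $S$, the complex $F_\bullet \otimes_R k$ has zero differential, so $\tor_\bullet^R(S,k) = F_\bullet \otimes_R k$ as a graded $k$-vector space with its induced (associative, by hypothesis) product. The subalgebra $\tor_+^R(S,k)$ is then spanned by products of elements of positive homological degree. Next, $G_\bullet \otimes_R S$ has homology $M$ in degree $0$ (since $F_\bullet$, hence $G_\bullet \otimes_R F_\bullet$-type tensor arguments, resolve correctly — more precisely $\HH_i(G_\bullet \otimes_R S) = \Tor_i^R(M,S)$ and one is in the situation where the relevant higher Tor's are handled by the module structure), and the augmentation $G_\bullet \otimes_R S \to M$ lifts through a minimal $S$-free resolution $G'_\bullet$ of $M$. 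Tensoring the comparison map $G_\bullet \otimes_R S \to G'_\bullet$ with $-\otimes_S k$ and passing to homology gives the map $\tor_\bullet^R(M,k) \to \tor_\bullet^S(M,k)$. The DG $F_\bullet$-module axioms — in particular the Leibniz rule $d(xm) = d(x)m \pm x\,d(m)$ — are what force the products $\tor_+^R(S,k)\cdot\tor_\bullet^R(M,k)$ into the kernel: an element of positive $\tor^R(S,k)$-degree acting on a cycle becomes a boundary after base change to $S$, precisely because over $S$ the image of $F_{\geq 1}\otimes_R S$ in the bar-type construction of $\tor^S$ is killed (the class of $F_{\geq 1}$ dies in $\Tor^S(S,k)$'s relevant filtration). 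Conversely, a dimension count / filtration argument (the skeleton filtration of $F_\bullet$ inducing a filtration on $G_\bullet \otimes_R F_\bullet \to G_\bullet \otimes_R S$) shows nothing else is in the kernel.

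The main obstacle I anticipate is making the "conversely" direction precise: showing the kernel is no larger than $\tor_+^R(S,k)\cdot\tor_\bullet^R(M,k)$. This is really the content of Avramov's theorem and is where one must use minimality of both $F_\bullet$ and $G'_\bullet$ together with the associativity of the algebra structure in an essential way — associativity is needed so that the action of the positively-graded part is well-behaved under the filtration, and without it the quotient in Definition~\ref{def:avramovobstructions} need not even receive a natural map. Concretely, I would set up the change-of-rings spectral sequence $\Tor_p^S\bigl(\Tor_q^R(M,S),k\bigr) \Rightarrow \Tor_{p+q}^R(M,k)$ (or the dual Cartan--Eilenberg one), observe that the DG-module structure splits off the $q>0$ contributions as precisely $\tor_+^R(S,k)\cdot\tor_\bullet^R(M,k)$, and read off that the edge map is the map of Definition~\ref{def:avramovobstructions} with the asserted kernel. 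I would then cite \cite{avramov1981obstructions} for the detailed verification that this edge map coincides with the one defined via the comparison map above, rather than reproving that compatibility here. The upshot: a DG $F_\bullet$-module structure on the minimal resolution of $M$ yields $o^f(M) = 0$, which is the contrapositive of the statement.
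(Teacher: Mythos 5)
The paper does not prove this statement: it is quoted from \cite{avramov1981obstructions} as Theorem~1.2 of that paper, with no argument following, so there is no internal proof to compare against.

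Evaluating the sketch on its own merits: the contrapositive framing is fine, but two of its claims are off. First, the Leibniz rule of a DG $F_\bullet$-module structure is \emph{not} what forces $\tor_+^R(S,k)\cdot\tor_\bullet^R(M,k)$ into the kernel of $\tor^R_\bullet(M,k)\to\tor^S_\bullet(M,k)$; that containment is automatic for any $S$-module $M$, since the comparison map is $\tor^R_\bullet(S,k)$-linear over the ring map $\tor^R_\bullet(S,k)\to\tor^S_\bullet(S,k)=k$, which annihilates everything of positive degree. Indeed this automatic containment is what makes the map in Definition~\ref{def:avramovobstructions} well-defined in the first place, with no DG hypotheses at all. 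Second, and more seriously, the real content of the theorem is the reverse containment — that the kernel is \emph{no larger} once a DG $F_\bullet$-module structure exists — and that is exactly the step you leave as a ``dimension count / filtration argument'' and then defer to Avramov. The change-of-rings spectral sequence $\tor_p^S(\tor_q^R(S,M),k)\Rightarrow\tor^R_{p+q}(M,k)$ does have the right edge map, but showing that a DG $F_\bullet$-module structure identifies the kernel of that edge map with $\tor_+^R(S,k)\cdot\tor^R_\bullet(M,k)$ \emph{is} the theorem, and the sketch supplies no argument for it. Since the paper treats the result as a citation, punting to \cite{avramov1981obstructions} is reasonable in context, but as a standalone proof the proposal stops short of the decisive step.
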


In the case that $S = R/J$ and $J$ is generated by a regular sequence, the Avramov obstructions break into ``graded" pieces:
$$o_i^f (M) := \ker \Big( \frac{\tor_i^R (M,k)}{\tor_1^R (S,k) \cdot \tor_{i-1}^R (M , k)} \to \tor_i^S (M , k) \Big).$$
Observe that this gives a method for detecting whether a minimal free resolution admits the structure of an associative DG-algebra:
\begin{prop}\label{prop:vanishofObs}
Let $(R , \m , k)$ denote a regular local ring. Suppose the minimal free resolution $F_\bullet$ of $R/I$ admits the structure of an associative DG-algebra. Then for any complete intersection $\mfa \subseteq I$, $o_i^f (R/I) = 0$ for all $i >0$, where $f : R \to R/\mfa$ is the natural quotient map.
\end{prop}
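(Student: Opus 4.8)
The strategy is to realize the minimal free resolution $F_\bullet$ of $R/I$ as a DG-module over the minimal free resolution of $S := R/\mfa$, and then invoke Theorem \ref{thm:obstructions} in its contrapositive form. Write $\mfa = (g_1, \dots, g_c)$ with $g_1, \dots, g_c$ a regular sequence, so that the Koszul complex $K := K^R(g_1, \dots, g_c)$, equipped with its standard exterior DG-algebra structure, is the minimal $R$-free resolution of $S$. Recall that $K$ is the free strictly graded-commutative $R$-algebra on generators $e_1, \dots, e_c$ in homological degree $1$, with $d^K(e_i) = g_i$.

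First I would construct a morphism of DG-algebras $\varphi\colon K \to F_\bullet$ lifting the canonical surjection $S \twoheadrightarrow R/I$. Since $\im(d_1^F) = I \supseteq \mfa$, for each $i$ I may choose $\varphi_1(e_i) \in F_1$ with $d_1^F(\varphi_1(e_i)) = g_i$; setting $\varphi_0 = \mathrm{id}_R$ and extending multiplicatively via the universal property of the exterior algebra (legitimate because $\varphi_1(e_i)^2 = 0$ and $\varphi_1(e_i)\varphi_1(e_j) = -\varphi_1(e_j)\varphi_1(e_i)$ in $F$ by axioms (c) and (d) of Definition \ref{def:dga}) yields a unital algebra map $\varphi\colon K \to F$. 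An induction on homological degree, using that $d^K$ and $d^F$ are derivations and that $\varphi d^K = d^F \varphi$ holds on the degree-$1$ generators, shows $\varphi$ is a chain map, hence a DG-algebra morphism; restriction of scalars along $\varphi$ then makes $F_\bullet$ into a DG-module over $K$.

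With this in place, Theorem \ref{thm:obstructions} applies with $K$ in the role of the DG-algebra resolution (of $S$): the minimal $R$-free resolution of $M := R/I$, namely $F_\bullet$ itself, does admit a DG $K$-module structure, so the contrapositive forces $o^f(R/I) = 0$. Finally, because $\mfa$ is generated by a regular sequence, $\tor_\bullet^R(S,k)$ is the exterior algebra on $\tor_1^R(S,k)$, whence $\tor_+^R(S,k)\cdot\tor_\bullet^R(R/I,k)$ in total degree $i$ coincides with $\tor_1^R(S,k)\cdot\tor_{i-1}^R(R/I,k)$ and $o^f(R/I)$ decomposes as $\bigoplus_i o_i^f(R/I)$; vanishing of the whole obstruction gives $o_i^f(R/I) = 0$ for every $i$, in particular for all $i > 0$. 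The only genuine content is the DG comparison map of the second step, and this is essentially the classical fact that the Koszul complex is initial among DG-algebra resolutions of $R/\mfa$ — the freeness of the exterior algebra on degree-one generators means there is no obstruction to the lift — so I anticipate no real difficulty; the main thing to be careful about is the clash of notation between the $F_\bullet$ of Theorem \ref{thm:obstructions} (the resolution of $S$) and the $F_\bullet$ of this proposition (the resolution of $R/I$).
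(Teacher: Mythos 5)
Your proof is correct, and it fills in details that the paper leaves implicit: the paper states Proposition \ref{prop:vanishofObs} without a proof, treating it as an immediate observation following Theorem \ref{thm:obstructions} and the discussion of the graded pieces $o_i^f$. Your argument is exactly the one the paper has in mind — build a DG-algebra morphism $K \to F_\bullet$ from the Koszul complex resolving $R/\mfa$, thereby giving $F_\bullet$ a DG $K$-module structure by restriction of scalars, and then invoke Theorem \ref{thm:obstructions} in contrapositive form. The two points you add beyond what the paper says explicitly (the freeness of the exterior algebra on degree-one generators over strictly graded-commutative algebras, which lets the lift extend multiplicatively, and the check that $\tor_+^R(S,k)\cdot\tor_\bullet^R(R/I,k)$ in degree $i$ equals $\tor_1^R(S,k)\cdot\tor_{i-1}^R(R/I,k)$ when $S$ is a complete intersection, so that $o^f = \bigoplus_i o_i^f$) are exactly the right things to verify, and both are handled correctly.
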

Proposition \ref{prop:vanishofObs} may tempt one to ask whether the vanishing of Avramov obstructions implies the existence of a DG algebra structure on the minimal free resolution of $R/I$. An answer in the negative was provided by Srinivasan (see \cite{srinivasan1992non}).

\section{The Generalized Taylor Complex}\label{sec:genTaylor}

In this section, we recall Herzog's construction of the generalized Taylor complex and deduce some consequences of this construction. We show that by truncating an appropriate direct summand of the Taylor resolution, one can construct a free resolution for the intersection of any family of monomial ideals. We then introduce the notion of quasitransverse families of monomial ideals, the properties of which will be studied more extensively in later sections. 

The following notation will be in play for the remainder of the paper.

\begin{notation}\label{not:ngrSetup}
Let $R = k[x_1 , \dots , x_n]$ where $k$ is any field. Throughout the paper, a \emph{multigraded} $R$-module $M$ will refer to an $R$-module that is multigraded with respect to the \emph{fine multigrading} on $R$; that is, the multigrading induced by giving each $x_i$ multidegree $\epsilon_i$ (where $\epsilon_i \in \bbz^n$ denotes the vector with a $1$ in the $i$th spot and $0$s elesewhere). Likewise, a complex of multigraded free $R$-modules $F$ will be referred to as \emph{multigraded} if all differentials are multigraded homomorphisms.
\end{notation}

\begin{notation}
Let $F $ be a multigraded complex. For any multigraded element $f \in F $, let $m_f$ denote the multidegree of $f$. Given two monomials $p_1$ and $p_2 \in R$, use the notation
$$[ p_1 , p_2] := \lcm (p_1 , p_2) \quad \textrm{and} \quad (p_1 , p_2) := \gcd (p_1 , p_2).$$
\end{notation}

The following definition comes directly from \cite{herzog2007generalization}, where we have made some adjustments to the notation to fit our needs.

\begin{definition}\label{def:genTaylor}
Let $F$ and $G$ be two multigraded complexes of free $R$-modules; by definition,
$$d^F (f) = \sum_{a} \alpha_{a,f} \frac{m_f}{m_a} a \quad \textrm{and} \quad d^G (g) = \sum_{b} \beta_{b,g} \frac{m_g}{m_b} b,$$
for some coefficients $\alpha_{a,f}, \ \beta_{b,g} \in k$, where each summation is taken over a chosen multigraded basis for $F$ and $G$, respectively. The \emph{generalized Taylor complex} on the complexes $F$ and $G$, denoted $F*G$, is the complex with
\begingroup\allowdisplaybreaks
\begin{align*}
    (F*G)_n &:= \bigoplus_{i+j = n} F_i * G_j, \quad \textrm{and differential} \\
    d^{F*G} (f * g) &:= \sum_{a} \alpha_{a,f} \frac{m_{f*g}}{m_{a*g}} a*g + (-1)^{|f|} \sum_b \beta_{b,g} \frac{m_{f*g}}{m_{f*b}} f*b, \\
    &\textrm{where} \  m_{f*g} = [m_f , m_g]. 
\end{align*}
\endgroup
\end{definition}

Notice that the generalized Taylor complex is always a direct summand of the classical Taylor complex on the respective monomial ideals. By the following result of Herzog, we see that the generalized Taylor complex is generally a closer approximation to the minimal free resolution than the classical Taylor complex.

\begin{theorem}[{\cite{herzog2007generalization}}]\label{thm:genTaylorRes}
If $F $ and $G $ are multigraded free resolutions of $R/I$ and $R/J$, where $I$ and $J$ are monomial ideals, then $F*G$ is a free resolution of $R/(I+J)$.
\end{theorem}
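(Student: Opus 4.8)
The plan is to prove this by a double-complex / mapping-cone argument, exploiting the observation (already noted in the excerpt) that $F*G$ is a direct summand of the classical Taylor complex and that its differential has the shape of a tensor-product differential. First I would set up $F*G$ as the total complex of a bicomplex whose $(i,j)$ spot is $F_i * G_j$, with horizontal maps induced by $d^F$ (twisted by the $\lcm$ rescaling factors $m_{f*g}/m_{a*g}$) and vertical maps induced by $d^G$ (similarly twisted, with the sign $(-1)^{|f|}$). The key structural point to check is that these twisted maps still square to zero and anticommute, so that the total complex is genuinely a complex; this is essentially bookkeeping with the multidegree factors, using that $[m_a, m_g]$ divides $[m_f, m_g]$ whenever $a \in \supp(d^F f)$ (because $m_a \mid m_f$), so all the fractions appearing are honest monomials in $R$.

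Next I would identify each row and each column with a more familiar object. Fix $j$ and consider the row $\cdots \to F_i * G_j \to F_{i-1}*G_j \to \cdots$. I claim this row, after accounting for the monomial twists, is isomorphic to the complex $F \otimes_R R(-\text{shift})$ localized/multigraded-shifted appropriately — more precisely, it computes $\operatorname{Tor}$ of $R/I$ against the cyclic module $R/(m_{g})$-type piece, but the cleanest route is: the row in homological degree $i$ with its twisted differential is the "monomial rescaling" of $F$ by the monomial $m_g$, and such a rescaling of an exact (in positive degrees) complex is still exact in positive degrees with $H_0$ equal to the corresponding multigraded piece. So the bicomplex has exact columns (or rows) except at the edge. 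Then the standard spectral-sequence / iterated-cone argument collapses $\operatorname{Tot}(F*G)$ onto the edge complex, and one checks the edge complex is a resolution of $R/(I+J)$: concretely, $H_0(F*G) = R/(I+J)$ because the degree-zero part is $R \xrightarrow{\binom{\cdot}{\cdot}} R \oplus R$ assembling the generators of $I$ and $J$, and higher homology vanishes because one filtration of the bicomplex has $E^1$-page already concentrated in a single row/column that is itself exact.

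The cleanest self-contained version avoids spectral sequences entirely: induct on the number of generators of $J$ (equivalently on the homological length of $G$, or peel off one basis element at a time) and realize $F*G$ as an iterated mapping cone, exactly as in the standard inductive proof that the Taylor complex resolves a monomial ideal. At each step one adds a new generator $u$ to the ideal $J' = J'' + (u)$, and $F * G'$ is the mapping cone of a map $F*G'' \to F*G''$ given by multiplication by the appropriate $\lcm$ ratios; the cone of a quasi-isomorphism-up-to-the-ideal computes the quotient by the new generator, and one invokes the short exact sequence $0 \to R/(I + J'' : u)(-m_u) \xrightarrow{\cdot u} R/(I+J'') \to R/(I+J') \to 0$ together with the inductive hypothesis.

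The main obstacle I anticipate is purely notational rather than conceptual: keeping the $\lcm$-rescaling factors $m_{f*g}/m_{a*g}$ consistent across the bicomplex so that the identification of a row/column with a genuine shift of $F$ (resp. $G$) is canonical, and verifying that the twisted vertical and horizontal differentials anticommute on the nose (the signs $(-1)^{|f|}$ must interact correctly with the monomial factors). Once that bookkeeping is pinned down — in particular the divisibility relations among the monomials $m_f, m_g, m_a, m_b, [m_f,m_g]$ — the homological content is the standard fact that the total complex of a bicomplex with exact columns and an exact augmented bottom row is a resolution of the remaining homology, which here is exactly $R/(I+J)$.
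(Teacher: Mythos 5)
This theorem is cited from Herzog \cite{herzog2007generalization} and is not reproved in the paper, so the comparison is against Herzog's argument, which is a multigraded-strand/K\"unneth argument: for every multidegree $\mathbf{a}$, the $\mathbf{a}$-strand of $F*G$ is canonically the tensor product over $k$ of the $\mathbf{a}$-strands of $F$ and $G$ (because $[m_f,m_g]\mid \mathbf{x}^{\mathbf{a}}$ if and only if both $m_f\mid\mathbf{x}^{\mathbf{a}}$ and $m_g\mid\mathbf{x}^{\mathbf{a}}$, and all the rescaling fractions become units in the strand), and K\"unneth over a field reduces everything to $H_0(F_{\mathbf{a}})\otimes_k H_0(G_{\mathbf{a}}) = (R/I)_{\mathbf{a}}\otimes_k (R/J)_{\mathbf{a}}$, which is $k$ exactly when $\mathbf{x}^{\mathbf{a}}\notin I+J$.

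Your bicomplex route has a gap at the central step. The row obtained by fixing a basis element $g\in G_j$ is \emph{not} isomorphic to a degree-shift of $F$: its differential carries $f$ to $\sum_a \alpha_{a,f}\,\tfrac{[m_f,m_g]}{[m_a,m_g]}\,a$, and $[m_f,m_g]/[m_a,m_g]\neq m_f/m_a$ in general. Concretely, with $I=(x^2,y^2)$ resolved by its Koszul complex and $m_g=xy$, the row becomes the Koszul complex on $(x,y)$, whose cokernel has length $1$ while $R/(x^2,y^2)$ has length $4$; these are not isomorphic complexes. Exactness of the row in positive degrees is still \emph{true}, but the ``monomial rescaling preserves exactness'' heuristic you offer is not a valid general principle, and the localization justification fails outright: a complex of free modules may be exact in positive degrees after inverting the variables yet have nonzero $H_1$ before localizing, e.g.\ $0\to R\xrightarrow{\binom{x}{0}}R^{2}\xrightarrow{(0,\,y)}R$ has $H_1\cong R/(x)\neq 0$, which vanishes upon inverting $x$. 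The clean way to see row-exactness is precisely the strand identification above applied to the single row, so your spectral-sequence scaffolding ends up requiring the K\"unneth-type input as its key lemma anyway. The iterated-mapping-cone alternative has a separate problem: peeling off one generator of $J$ at a time builds a filtration of the \emph{Taylor} complex of $J$, not of an arbitrary multigraded free resolution $G$, so the induction as written only proves the theorem when $G$ is the Taylor complex (and symmetrically for $F$); recovering the general case still needs an extra comparison step, e.g.\ exhibiting a general $F*G$ as a multigraded direct summand of $T^I*T^J$, which you neither state nor justify.
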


Whenever referring to a multigraded free resolution, we will tacitly assume that a multigraded basis has been chosen in each homological degree. The following observation is a simple description of the resolution poset associated to a generalized Taylor complex in terms of the original complexes:

\begin{obs}
Let $F $ and $G $ be multigraded free resolutions of $R/I$ and $R/J$, where $I$ and $J$ are monomial ideals. Then the poset $\po (F * G)$ is precisely the product poset $\po (F) \times \po (G)$, where each $(f,g) \in \po (F * G)$ is given multigrading $m_{f * g}$. 
\end{obs}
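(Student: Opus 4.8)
The statement to prove is that $\po(F*G) \cong \po(F) \times \po(G)$ as posets, with the multigrading on $(f,g)$ given by $m_{f*g} = [m_f, m_g]$. The proof is essentially unwinding the definition of the resolution poset together with the explicit formula for $d^{F*G}$.

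\medskip

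The plan is as follows. First I would fix multigraded bases $\{a\}$ for $F$ and $\{b\}$ for $G$, so that $\{a * b\}$ is the induced multigraded basis for $F*G$ in each homological degree. The covering relations of $\po(F*G)$ are, by definition, determined by which basis elements $a'*b'$ in homological degree $n-1$ appear with nonzero coefficient in $d^{F*G}(a*b)$ for a basis element $a*b$ in degree $n$. Reading off the differential from Definition~\ref{def:genTaylor}, the expansion of $d^{F*G}(a*b)$ is a sum of terms $\alpha_{a',a}\frac{m_{a*b}}{m_{a'*b}}\, a'*b$ over basis elements $a'$ of $F$ in degree $|a|-1$, plus terms $\pm\beta_{b',b}\frac{m_{a*b}}{m_{a*b'}}\, a*b'$ over basis elements $b'$ of $G$ in degree $|b|-1$. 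Crucially, the only basis elements of $F*G$ that can appear are of the form $a'*b$ (with $b$ unchanged) or $a*b'$ (with $a$ unchanged) — no ``diagonal'' term $a'*b'$ occurs. Hence $a'*b' \lessdot a*b$ in $\po(F*G)$ if and only if either ($b'=b$ and $a' \lessdot a$ in $\po(F)$, with the coefficient $\alpha_{a',a} \neq 0$) or ($a'=a$ and $b' \lessdot b$ in $\po(G)$, with $\beta_{b',b} \neq 0$). One subtlety to address here: the coefficient $\frac{m_{a*b}}{m_{a'*b}}$ is genuinely a monomial in $R$ (never zero), so it does not kill any term; thus the covering relations of $\po(F*G)$ coincide exactly with $a' \lessdot a$ in $\po(F)$ being witnessed by $\alpha_{a',a}\neq 0$. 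These are precisely the covering relations of the product poset $\po(F) \times \po(G)$ (where $(a',b') \lessdot (a,b)$ iff the pair differs in exactly one coordinate by a covering relation). Taking transitive closures on both sides then gives the poset isomorphism.

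\medskip

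Finally, for the multigrading claim, I would simply note that the basis element $a*b$ of $F*G$ carries, by construction in Definition~\ref{def:genTaylor}, the multidegree $m_{a*b} = [m_a, m_b] = \lcm(m_a, m_b)$, which is exactly what is asserted. So this part is immediate from the definition of $F*G$ and requires no further argument.

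\medskip

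I do not expect any serious obstacle here; the result is a bookkeeping consequence of the shape of the differential. The one point requiring a moment's care is confirming that no term of $d^{F*G}$ changes \emph{both} tensor factors simultaneously (so that covering relations in $\po(F*G)$ stay ``axis-parallel''), and that the monomial coefficients $\frac{m_{a*b}}{m_{a'*b}}$ and $\frac{m_{a*b}}{m_{a*b'}}$ are always nonzero in $R$ so that the support of $d^{F*G}(a*b)$ is governed purely by the field coefficients $\alpha_{a',a}$ and $\beta_{b',b}$, i.e. by the covering relations already present in $\po(F)$ and $\po(G)$. Once that is observed, matching covering relations and passing to transitive closures completes the proof.
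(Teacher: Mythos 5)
Your proof is correct and is exactly the argument the paper leaves implicit (the statement is given as an Observation with no written proof). The two key points you isolate — that the differential $d^{F*G}$ only produces axis-parallel terms $a'*b$ or $a*b'$, and that the monomial coefficients $\tfrac{m_{a*b}}{m_{a'*b}}$, $\tfrac{m_{a*b}}{m_{a*b'}}$ are honest nonzero monomials (which follows since $m_{a'}\mid m_a$ by multigradedness, hence $[m_{a'},m_b]\mid[m_a,m_b]$) so that the support is governed purely by $\alpha_{a',a}$ and $\beta_{b',b}$ — are precisely what makes the covering relations match those of $\po(F)\times\po(G)$, and the transitive-closure step is unambiguous because the declared relations are graded by homological degree.
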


Recall that two ideals are \emph{transverse} if $I \cap J = IJ$. In this situation, if $F $ and $G $ are minimal free resolutions of $R/I$ and $R/J$, respectively, then the minimal free resolution of $R/(I+J)$ may be obtained as the tensor product complex $F \otimes G $. This motivates the following definition:
\begin{definition}\label{def:qTransverse}
Let $I_1 , \dots , I_r$ be a family of monomial ideals and $F^i$ a minimal free resolution of $R/ I_i$ for each $1 \leq i \leq r$. Then the family $I_1 , \dots , I_r$ is a \emph{quasitransverse} family of monomial ideals if
$$F^1 * \cdots * F^r$$
is the minimal free resolution of $R/ (I_1 + \cdots + I_r)$. 
\end{definition}

Quasitransverseness is much less restrictive than transverseness for monomial ideals; for instance, two monomials $r_1$ and $r_2$ generate transverse ideals if and only if $r_1$ and $r_2$ are coprime. On the other hand, $r_1$ and $r_2$ generate quasitransverse ideals as long as neither divides the other. One easy consequence of the definition of quasitransverse is the following formulation in terms of the associated resolution posets:

\begin{obs}
Let $I$ and $J$ be monomial ideals with minimal free resolutions $F$ and $G$ of $R/I$ and $R/J$, respectively. Then $I$ and $J$ are quasitransverse if and only if for all $(f,g) \lessdot (f',g')$ in $\po (F) \times \po (G)$, one has $\deg m_{f * g} < \deg m_{f' * g'}$. 
\end{obs}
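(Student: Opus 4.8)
The plan is to prove the observation by unwinding the definition of quasitransverseness through the structure of the generalized Taylor complex $F*G$, using the fact (noted immediately before the definition) that the poset $\po(F*G)$ is the product poset $\po(F)\times\po(G)$ with each node $(f,g)$ carrying multidegree $m_{f*g}=[m_f,m_g]$. The key conceptual point is that $F^1*\cdots*F^r$ (here just $F*G$) always \emph{is} a free resolution of $R/(I+J)$ by Theorem \ref{thm:genTaylorRes}, and a multigraded free resolution of a module with residue field $k$ at the bottom is minimal if and only if its differential has all entries in the maximal ideal $\m=(x_1,\dots,x_n)$.

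First I would translate the minimality condition into the multidegree language already set up in the excerpt. For a multigraded complex whose basis element $e$ in degree $i$ maps to $\sum_a \alpha_{a,e}\,\frac{m_e}{m_a}\,a$, the coefficient of $a$ in $d(e)$ is the monomial $\frac{m_e}{m_a}$ (up to the scalar $\alpha_{a,e}\in k$), which lies in $\m$ precisely when $m_e \neq m_a$, i.e.\ when $\deg m_e > \deg m_a$ for every $a \in \supp(d(e))$ with $a \lessdot e$ in the resolution poset. (Note $m_a \mid m_e$ always, so $\deg m_a \le \deg m_e$, with equality iff $m_a = m_e$.) Hence a multigraded free resolution is minimal if and only if $\deg m_f < \deg m_e$ for every covering relation $f \lessdot e$ in its resolution poset. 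Applying this criterion to $F*G$: by Theorem \ref{thm:genTaylorRes} it is a resolution of $R/(I+J)$, and by the preceding observation its poset is $\po(F)\times\po(G)$ with multidegrees $m_{f*g}$, so $F*G$ is the \emph{minimal} free resolution — equivalently, $I,J$ are quasitransverse — exactly when $\deg m_{f*g} < \deg m_{f'*g'}$ for all $(f,g)\lessdot(f',g')$ in $\po(F)\times\po(G)$, which is the claimed statement.

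The one genuine subtlety — and the step I expect to be the main obstacle — is checking that the covering relations of $F*G$ as a \emph{resolution poset} (i.e.\ those coming from actual nonzero entries of $d^{F*G}$) coincide with the covering relations of the product poset $\po(F)\times\po(G)$, rather than just being contained in them. From the differential formula in Definition \ref{def:genTaylor}, the support of $d^{F*G}(f*g)$ is contained in $\{a*g : a\in\supp d^F(f)\}\cup\{f*b : b\in\supp d^G(g)\}$, and the coefficients $\frac{m_{f*g}}{m_{a*g}}$, $\frac{m_{f*g}}{m_{f*b}}$ are genuinely nonzero monomials scaled by the original nonzero field coefficients $\alpha_{a,f}$, $\beta_{b,g}$; so no cancellation occurs within a single summand, and a covering $(f,g)\lessdot(f',g')$ in the product poset (meaning $f'=f$, $g\lessdot g'$, or $g'=g$, $f\lessdot f'$) does occur in $\po(F*G)$. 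I would remark that this is exactly what the preceding observation asserts and may simply cite it. With that in hand the proof is the two-line equivalence above; I would write it as: $I,J$ quasitransverse $\iff$ $F*G$ minimal $\iff$ (by the minimality criterion applied to the resolution $F*G$) $\deg m_{f*g}<\deg m_{f'*g'}$ for all $(f,g)\lessdot(f',g')$ in $\po(F)\times\po(G)$.
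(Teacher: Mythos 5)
Your proof is correct. The paper states this as an ``easy consequence'' of the definition and does not supply an argument, but yours---translating minimality of the resolution $F*G$ into the condition that every covering-relation entry $\frac{m_{f'*g'}}{m_{f*g}}$ of $d^{F*G}$ is a nonunit monomial, and invoking the preceding observation (together with the fact that for finite posets, equality of posets forces equality of covering relations) to identify the covers in $\po(F*G)$ with those in $\po(F)\times\po(G)$---is exactly the expected unwinding of the definitions.
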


Implicit in Herzog's paper \cite{herzog2007generalization} is the fact that one can augment a generalized Taylor resolution to obtain a free resolution of the intersection of any collection of monomial ideals; we make this observation explicit in Proposition \ref{prop:resOfInt}; first, we need to introduce some notation:

\begin{notation}
Let $(F ,d )$ denote a complex. The notation $(F_{\geq n}, d_{\geq n} )$ will denote the complex with
$$\big( F_{\geq n} \big)_i := \begin{cases}
F_i & \textrm{if} \ i \geq n \\
0 & \textrm{if} \ i < n \\
\end{cases}$$
$$\big( d_{\geq n} \big)_i := \begin{cases}
d_i & \textrm{if} \ i > n \\
0 & \textrm{if} \ i \leq n \\
\end{cases}$$
\end{notation}

\begin{definition}\label{def:resOfInt}
Let $F $ and $G $ be multigraded free resolutions of $R/I$ and $R/J$, respectively, where $I$ and $J$ are both monomial ideals. Then $F\dstar G$ is defined to be the complex induced by the differentials
$$d^{F\dstar G}_n := \begin{cases}
d^F_1 * d^G_1 & \textrm{if} \ n=1 \\
d^{F_{\geq 1} * G_{\geq 1} }_{n+1} & \textrm{otherwise}, \\
\end{cases}$$
where the notation $d^F_1 * d^G_1$ denotes the map
$$d^F_1 * d^G_1 (f_1 * g_1) := m_{f_1*g_1}.$$
Likewise, the notation $\st (F,G)$ will denote the \emph{star product}; that is, the complex induced by the differentials
$$d^{\st (F,G)}_n := \begin{cases}
d^F_1 \otimes d^G_1 & \textrm{if} \ n=1 \\
d^{F_{\geq 1} \otimes G_{\geq 1} }_{n+1} & \textrm{otherwise}. \\
\end{cases}$$
\end{definition}

\begin{remark}
In \cite{geller2021minimal} and \cite{vandebogert2020vanishing}, the the authors use the notation $F * G$ to denote the complex $\st (F,G)$ of Definition \ref{def:resOfInt}. Since the former notation conflicts with the generalized Taylor complex notation, the latter notation has been chosen to differentiate between the star product and the generalized Taylor complex.
\end{remark}

Observe that $F\dstar G$ is indeed a complex since upon localizing at the variables of $R$, $F\dstar G$ is isomorphic to $\st (F,G)$. The following proposition illustrates the relationship between the complex of Definition \ref{def:resOfInt} and free resolutions of intersections of monomial ideals.
\begin{prop}\label{prop:resOfInt}
Let $(R, \m , k)$ denote a standard graded polynomial ring over a field. Let $I$ and $J$ be monomial ideals and let $F $, $G $ denote free resolutions of $R/I$ and $R/J$, respectively. Then $F \dstar G $ is a free resolution of $R/I \cap J$. If $I$ and $J$ are quasitransverse and $F$ and $G$ are minimal, then $F \dstar G$ is minimal.
\end{prop}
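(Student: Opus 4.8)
The plan is to deduce everything from Theorem~\ref{thm:genTaylorRes} together with the observation (recorded just before this proposition) that $F\dstar G$ becomes isomorphic to $\st(F,G)$ after localizing at the product of the variables. First I would recall Herzog's result that $F*G$ is a free resolution of $R/(I+J)$, and analyze the structure of its terms in low homological degree: $(F*G)_0 = F_0*G_0 \cong R$, $(F*G)_1 = (F_1*G_0)\oplus(F_0*G_1)$, and for $n\geq 2$ the term $(F*G)_n = \bigoplus_{i+j=n} F_i*G_j$ contains the summand $\bigoplus_{i+j=n,\, i,j\geq 1} F_i*G_j = (F_{\geq 1}*G_{\geq 1})_n$. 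The key point is the standard ``mapping cone / truncation'' principle: if $C$ is a free resolution of $R/(I+J)$ whose augmented complex is exact, then truncating in the right way produces a resolution of a syzygy module, and here the relevant syzygy is exactly $I\cap J$. Concretely, the short exact sequence $0\to I\cap J\to I\oplus J\to I+J\to 0$ (equivalently $0\to R/(I\cap J)\to R/I\oplus R/J\to R/(I+J)\to 0$) identifies $R/(I\cap J)$ as controlled by the part of the resolution of $R/(I+J)$ that mixes strictly positive contributions from both $F$ and $G$.

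The main steps, in order: (1) Write down $F\dstar G$ explicitly from Definition~\ref{def:resOfInt} and check it is a complex — this is already granted by the remark preceding the proposition, via the localization isomorphism with $\st(F,G)$, but I would reiterate that a sequence of maps is a complex iff it becomes one after localizing at a faithfully flat extension (or, more simply, iff $d^2=0$, and $d^2=0$ can be verified entry-wise, which is insensitive to localization). (2) Show $\HH_0(F\dstar G)\cong R/(I\cap J)$: the cokernel of $d_1^{F\dstar G} = d_1^F * d_1^G$ sends $f_1*g_1\mapsto m_{f_1*g_1} = [m_{f_1},m_{g_1}]$, and the image of this map is precisely the ideal generated by all $\lcm$'s of a generator of $I$ with a generator of $J$, which is exactly $I\cap J$ for monomial ideals (this is the elementary fact that $I\cap J$ is generated by the pairwise lcm's of the minimal generators). (3) Show $\HH_n(F\dstar G) = 0$ for $n\geq 1$: this is where I use exactness of $F*G$. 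The complex $F_{\geq 1}*G_{\geq 1}$, shifted, together with the augmentation $d_1^F*d_1^G$ onto $I\cap J\subseteq R$, fits into a comparison with the truncation of $F*G$; exactness of $F*G$ in degrees $\geq 2$ transfers directly to exactness of $F\dstar G$ in degrees $\geq 2$, and exactness at the spot $n=1$ reduces to showing $\ker(d_1^F*d_1^G)$ equals the image of $d^{F_{\geq 1}*G_{\geq 1}}_2$, which again follows by chasing the exact sequence $F*G$ and the known description of its differential (the degree-$2$ part of $d^{F*G}$ restricted to $F_1*G_1$ lands in $(F_1*G_0)\oplus(F_0*G_1)$ and composes with the augmentation to $R$ to give $\pm$ the lcm relations). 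A clean way to package steps (2)–(3) is: localize at $x_1\cdots x_n$, where $F\dstar G \cong \st(F,G)$ and the latter is known to resolve $R/(I\cap J)$ localized (cf.\ \cite{geller2021minimal, vandebogert2020vanishing}); then argue that both $F\dstar G$ and a genuine resolution of $R/(I\cap J)$ have homology supported only at degree $0$ with the right degree-$0$ homology, using that $F\dstar G$ is a complex of free modules whose homology is multigraded and whose localization is concentrated in degree $0$ — multigraded modules that vanish after inverting all variables need not vanish, so I cannot conclude purely by localization, which means I genuinely need the mapping-cone argument above rather than the shortcut.

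For the minimality statement: if $I$ and $J$ are quasitransverse and $F$, $G$ are minimal, then by Definition~\ref{def:qTransverse} the complex $F*G$ is the minimal free resolution of $R/(I+J)$, so all its differentials have entries in $\m$. The differentials of $F\dstar G$ in degrees $\geq 2$ are (a shift of) differentials of $F_{\geq 1}*G_{\geq 1}$, hence also have entries in $\m$. For $n=1$, the differential $d_1^F*d_1^G(f_1*g_1) = [m_{f_1},m_{g_1}]$: minimality here requires that no $[m_{f_1},m_{g_1}]$ equal a unit, i.e.\ that $\deg[m_{f_1},m_{g_1}]\geq 1$, which is automatic since $m_{f_1},m_{g_1}$ are non-unit monomials (each $I_i$ is a proper ideal), \emph{and} that the generating set $\{[m_{f_1},m_{g_1}]\}$ is a minimal generating set of $I\cap J$ — the latter is not automatic from quasitransverseness of $F,G$ alone, so the honest claim is that quasitransverseness forces exactly this, which I would extract from the ``$\deg m_{f*g} < \deg m_{f'*g'}$ along covers'' reformulation recorded in the observation above: distinctness of multidegrees along the first-to-zeroth layer of the poset $\po(F)\times\po(G)$ prevents any lcm from being redundant. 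The main obstacle I anticipate is precisely step~(3) — verifying exactness of the truncated complex at homological degree $1$, since this is where the combinatorics of the generalized Taylor differential and the identification $\im(d_1^F*d_1^G) = I\cap J$ interact, and one must be careful that the ``truncate and re-augment'' operation genuinely produces a resolution rather than just a complex with the right $\HH_0$.
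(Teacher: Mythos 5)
Your approach is essentially the same as the paper's, and your instinct that a mapping-cone/truncation argument using the exactness of $F*G$ is needed is exactly right. The paper packages this cleanly via the short exact sequence of complexes
\[
0 \to F_{\geq 1}\oplus G_{\geq 1} \to (F*G)_{\geq 1} \to (F\dstar G)_{\geq 1}[-1] \to 0,
\]
whose long exact sequence of homology immediately gives $H_i(F\dstar G)=0$ for $i\geq 2$ (both outer complexes have vanishing homology there) and, at the bottom, $0\to H_1((F\dstar G)_{\geq 1}) \to I\oplus J \to I+J \to 0$, identifying $H_1((F\dstar G)_{\geq 1})\cong I\cap J$ by an explicit connecting morphism. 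You gesture at the same content but don't write down this SES; note in particular that $(F\dstar G)_{\geq 1}[-1]$ is a \emph{quotient} of $(F*G)_{\geq 1}$, not a summand, so exactness does not ``transfer directly'' as you claim — one must account for the kernel $F_{\geq 1}\oplus G_{\geq 1}$, and it is only because that piece has no homology above degree $1$ that the conclusion falls out. Also, correctly, you flag that the localization shortcut cannot finish the job; the paper uses localization only to see that $F\dstar G$ is a complex, not to compute its homology.

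On minimality, you introduce an unnecessary worry. Once one knows $F\dstar G$ is a resolution and all its differentials have entries in $\m$, it is a minimal free resolution by definition — there is nothing further to verify. The degree-$1$ differential has entries $m_{f_1*g_1}=[m_{f_1},m_{g_1}]$, which are non-unit monomials (so automatically in $\m$), and the differentials in degree $\geq 2$ are literally differentials of $F_{\geq 1}*G_{\geq 1}$, which are in $\m$ precisely when $I,J$ are quasitransverse and $F,G$ minimal. Your claim that one must \emph{separately} check that $\{[m_{f_1},m_{g_1}]\}$ is a minimal generating set of $I\cap J$ gets the implication backwards: that minimality is a \emph{consequence} of the resolution being minimal (any redundancy among the generators would manifest as a unit entry in $d_2$, which is already ruled out), not an extra hypothesis to be imported from the poset reformulation of quasitransverseness.
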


\begin{proof}
There is a short exact sequence of complexes
$$0 \to F_{\geq 1} \oplus G_{\geq 1} \to (F * G)_{\geq 1} \to (F\dstar G)_{\geq 1}[-1] \to 0,$$
so by the long exact sequence of homology, $H_i (F \dstar G) = 0$ for $i \geq 2$. For $i=1$, there is a short exact sequence
$$0 \to H_1 ((F\dstar G)_{\geq 1}) \to H_1 (F_{\geq 1}) \oplus H_1 (G_{\geq 1}) \to H_1 ((F * G)_{\geq 1}) \to 0,$$
and the connecting morphism is computed as the map 
$$[f_1 * g_1] \mapsto \Big( -\frac{m_{f_1*g_1}}{m_{f_1}} [f_1] , \frac{m_{f_1*g_1}}{m_{g_1}} [g_1] \Big)$$ 
(where $[-]$ denotes homology class). Identifying the homology appearing in the latter two terms of the short exact sequence, this implies that $H_1 (F\dstar G_{\geq 1}) \cong I \cap J$ via the map $[f_1 \dstar g_1] \mapsto m_{f_1*g_1}$. Thus, augmenting $(F\dstar G)_{\geq 1}$ by the map $d_1^F * d^G_1$ remains acyclic. For the last statement, observe that since the differentials of $F \dstar G$ are built from the differentials of $F_{\geq 1} * G_{\geq 1}$, the former differentials will be minimal if the latter differentials are.
\end{proof}

\begin{remark}
By iterating Proposition \ref{prop:resOfInt}, one finds that if $I_1 , \dots , I_r$ is a family of monomial ideals with $F^i$ a free resolution of $R/I_i$ for each $1 \leq i \leq n$, then a free resolution of $R/I_1 \cap \cdots \cap I_r$ is obtained as
$$F^1 \dstar F^2 \dstar \cdots \dstar F^r .$$
\end{remark}

The following corollary shows that there exists a variant of the Taylor complex for resolving intersections of monomial ideals; we will prove in Section \ref{sec:obstructions} that this complex admits the structure of an associative DG-algebra.

\begin{cor}
Let $I_1 , \dots , I_r$ be a family of monomial ideals and let $T^i$ denote the Taylor complex on $I_i$ for each $1 \leq i \leq r$. Then $T^1 \dstar \cdots \dstar T^r$ is a free resolution of $R/I_1 \cap \cdots \cap I_r$.
\end{cor}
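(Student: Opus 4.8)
This corollary is an immediate consequence of Proposition \ref{prop:resOfInt} together with the Remark that follows it, once we recall that the Taylor complex $T^i$ is a free resolution of $R/I_i$ (this is Taylor's original theorem, which we take as given). The strategy is simply to invoke the iterated version of Proposition \ref{prop:resOfInt}: since each $T^i$ resolves $R/I_i$, the Remark after Proposition \ref{prop:resOfInt} gives that $T^1 \dstar T^2 \dstar \cdots \dstar T^r$ is a free resolution of $R/I_1 \cap \cdots \cap I_r$.

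The one point that deserves a sentence of care is associativity of the $\dstar$ operation — that is, that the iterated complex $T^1 \dstar \cdots \dstar T^r$ is well-defined independent of parenthesization, so that ``$T^1 \dstar T^2 \dstar \cdots \dstar T^r$'' is unambiguous. The cleanest way to see this is to localize at the product of the variables $x_1 \cdots x_n$: as observed just before Proposition \ref{prop:resOfInt}, $F \dstar G$ becomes isomorphic to $\st(F,G)$ after this localization, and $\st(-,-)$ is built from the tensor product $F_{\geq 1} \otimes G_{\geq 1}$, which is associative up to the usual canonical isomorphism. Since a map of multigraded complexes of free modules is an isomorphism if and only if it is after inverting the variables, associativity of $\dstar$ follows from associativity of $\otimes$. (One checks analogously that $\dstar$ is commutative, so the corollary's conclusion does not depend on the ordering of the $I_i$ either, although this is not needed.)

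With well-definedness in hand, the proof is a one-line induction: the base case $r=1$ is Taylor's theorem, and the inductive step applies Proposition \ref{prop:resOfInt} with $F = T^1 \dstar \cdots \dstar T^{r-1}$ (a free resolution of $R/(I_1 \cap \cdots \cap I_{r-1})$ by the inductive hypothesis) and $G = T^r$ (a free resolution of $R/I_r$), yielding that $F \dstar G$ resolves $R/\big((I_1 \cap \cdots \cap I_{r-1}) \cap I_r\big) = R/(I_1 \cap \cdots \cap I_r)$. I do not anticipate a genuine obstacle here; the only thing one must not gloss over is that Proposition \ref{prop:resOfInt} is stated for arbitrary (not necessarily minimal) resolutions $F$ and $G$, so it applies verbatim to the iterated complex $T^1 \dstar \cdots \dstar T^{r-1}$, which is typically far from minimal. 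In other words, the proposition was deliberately phrased so that this corollary is essentially free.
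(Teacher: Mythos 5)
Your proof is correct and takes essentially the same approach as the paper: the corollary is just the Remark preceding it (iterate Proposition \ref{prop:resOfInt}) specialized to $F^i = T^i$, using Taylor's theorem that each $T^i$ resolves $R/I_i$. Your extra paragraph on associativity of $\dstar$ is a reasonable safeguard but not needed for the argument, since the conclusion (``is a free resolution'') holds for any fixed parenthesization produced by the induction.
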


\begin{remark}
One advantage to using $T^1 \dstar \cdots \dstar T^r$ as a free resolution of $R / I_1 \cap \cdots \cap I_r$ is that it is built using resolutions of each $R/I_i$; this means that one can use information about each $I_i$ separately to deduce information about the intersection.
\end{remark}

As a final corollary, we observe that cellularity of minimal free resolutions is preserved for sums and intersections of quasitransverse ideals.

\begin{cor}
Let $I$ and $J$ be quasitransverse monomial ideals. If the minimal free resolutions of $R/I$ and $R/J$ can be supported on a (polyhedral) cell complex, then the minimal free resolution of both $R/(I+J)$ and $R/ I \cap J$ can be supported on a (polyhedral) cell complex.
\end{cor}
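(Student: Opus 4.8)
The plan is to combine the generalized Taylor resolution $F * G$ with the standard machinery for cellular resolutions and with the structural observations already in hand, namely that $\po(F*G) = \po(F) \times \po(G)$ and that $F \dstar G$ is built from the differentials of $F_{\geq 1} * G_{\geq 1}$. First I would recall the setup: if the minimal free resolution of $R/I$ is supported on a polyhedral cell complex $X$ and that of $R/J$ on a polyhedral cell complex $Y$, then by definition there are bijections between the $i$-cells of $X$ (resp.\ $Y$) and the chosen multigraded basis elements of $F_i$ (resp.\ $G_i$), compatible with incidence and the monomial labels; here I allow $X$ and $Y$ to include the empty cell in dimension $-1$ corresponding to the rank-one free module in homological degree $0$, and I label each cell by the lcm of the labels of its vertices.

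Next I would produce the cell complex supporting $F * G$. The natural candidate is the product cell complex $X \times Y$ (with the product polyhedral structure: cells are products $\sigma \times \tau$ of a cell of $X$ with a cell of $Y$, of dimension $\dim\sigma + \dim\tau$), labeled by $m_{\sigma * \tau} = [m_\sigma, m_\tau]$. The key step is to check that the algebraic differential of $F * G$ from Definition~\ref{def:genTaylor} agrees, up to sign and up to the scalar monomial multipliers forced by the multigrading, with the cellular (topological) differential of $X \times Y$ — but this is exactly the content of the Leibniz-type formula $d^{F*G}(f*g) = \sum_a \alpha_{a,f}\tfrac{m_{f*g}}{m_{a*g}} a*g + (-1)^{|f|}\sum_b \beta_{b,g}\tfrac{m_{f*g}}{m_{f*b}} f*b$, which is precisely the formula for the boundary map in the cellular chain complex of a product of CW-complexes, with the monomial multipliers inserted exactly as required for a labeled cell complex to support a multigraded complex. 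Since $F$ and $G$ are minimal and $I,J$ are quasitransverse, Theorem~\ref{thm:genTaylorRes} and Definition~\ref{def:qTransverse} give that $F*G$ is the minimal free resolution of $R/(I+J)$; hence $R/(I+J)$ has minimal free resolution supported on the polyhedral cell complex $X \times Y$. (If $X, Y$ are simplicial one gets the simplicial join / appropriate product; the polyhedral case is what the statement asks for, and products of polytopes are polytopes, so polyhedrality is preserved.)

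For the intersection, I would use Proposition~\ref{prop:resOfInt}: $F \dstar G$ is the minimal free resolution of $R/I\cap J$ (again using quasitransverseness and minimality), and its basis elements in homological degree $i\geq 1$ are exactly the basis elements of $(F * G)_{i+1}$ coming from $F_{\geq 1} * G_{\geq 1}$, i.e.\ the pairs $f * g$ with $|f|\geq 1$ and $|g|\geq 1$. Topologically, these correspond to the cells $\sigma \times \tau$ of $X \times Y$ with $\dim\sigma \geq 0$ and $\dim\tau \geq 0$ — that is, to the product $X' \times Y'$ where $X'$ (resp.\ $Y'$) is the subcomplex of $X$ (resp.\ $Y$) obtained by deleting the empty $(-1)$-cell, shifted down one homological degree. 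One checks that the differential $d^{F\dstar G}_n = d^{F_{\geq 1}*G_{\geq 1}}_{n+1}$ for $n\geq 2$ is the cellular differential of this product restricted appropriately, and that the augmentation $d_1^{F\dstar G} = d_1^F * d_1^G$ sending $f_1 * g_1 \mapsto m_{f_1 * g_1}$ records exactly the monomial labels of the top-incidence cells — so $R/I\cap J$ has minimal free resolution supported on the (shifted) polyhedral cell complex $X' \times Y'$.

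The main obstacle I anticipate is purely bookkeeping: matching the signs and the monomial scalar multipliers in the algebraic differentials of Definitions~\ref{def:genTaylor} and~\ref{def:resOfInt} against the topological boundary maps of the product cell complex, including the degenerate behavior at the bottom of the complex (the $(-1)$-cells, the augmentation map, and the $\dstar$ shift). None of this is conceptually hard — it is the same verification that makes the classical Taylor complex cellular on the simplex and the tensor product of two cellular resolutions cellular on a product — but it must be done carefully enough to confirm that the labeled product complex genuinely \emph{supports} the resolution in the technical sense (the relevant subcomplexes $(X\times Y)_{\preceq m}$ have the right reduced homology). I would therefore organize the proof around a single lemma: ``if minimal multigraded free resolutions $F, G$ are supported on polyhedral cell complexes $X, Y$, then $F * G$ is supported on $X \times Y$ with lcm labels,'' prove the corollary's two statements as immediate consequences of that lemma together with Theorem~\ref{thm:genTaylorRes} and Proposition~\ref{prop:resOfInt}, and iterate for families of more than two ideals via the remark following Proposition~\ref{prop:resOfInt}.
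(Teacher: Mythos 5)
The paper leaves this corollary without an explicit proof, so there is no internal argument to compare against; your overall strategy --- transport the polyhedral supports of $F$ and $G$ to a labeled polyhedral complex supporting $F*G$, invoke quasitransverseness so that $F*G$ is already the minimal free resolution of $R/(I+J)$, and then handle $F\dstar G$ via the same picture --- is clearly the intended one. However, there is a genuine off-by-one error in the identification of the supporting complex for $F*G$. In the cellular-resolution convention you adopt (the $(-1)$-dimensional empty cell sits in homological degree $0$), a rank-one summand in homological degree $n$ must correspond to an $(n-1)$-dimensional cell. A generator $f*g$ of $(F*G)_n$ with $|f|+|g|=n$ corresponds to faces $\sigma\in X$, $\tau\in Y$ of dimensions $|f|-1$ and $|g|-1$; under your rule $\dim(\sigma\times\tau)=\dim\sigma+\dim\tau$ this cell has dimension $n-2$, one too low, and $\varnothing_X\times\varnothing_Y$ ends up in dimension $-2$, so your $X\times Y$ is not even a labeled cell complex with empty face in the usual sense. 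A minimal test case exposes the problem: if $I=(m_1)$ and $J=(m_2)$ with $F$, $G$ each supported on a single labeled vertex, then $F*G$ is the Taylor complex on $(m_1,m_2)$, supported on a $1$-simplex; the Cartesian product of two points is a point.

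The correct polyhedral complex for $F*G$ is the topological \emph{join} $X*Y$, whose faces are $\sigma*\tau$ with $\sigma$ or $\tau$ possibly empty and $\dim(\sigma*\tau)=\dim\sigma+\dim\tau+1$. Its face poset (including empty faces) is exactly $\po(X)\times\po(Y)$, matching the observation $\po(F*G)=\po(F)\times\po(G)$ that you correctly invoke, and its cellular boundary is the Leibniz formula you write down; joins of polyhedral cell complexes are again polyhedral (realize $X$ and $Y$ in skew affine subspaces and take convex hulls of pairs of faces), so polyhedrality is preserved. Once $F*G$ is identified, as a labeled complex, with the one supported on $X*Y$, the acyclicity of the labeled subcomplexes comes for free from Theorem~\ref{thm:genTaylorRes}, and quasitransverseness gives minimality. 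With this fix, your treatment of the intersection also becomes clean: $F\dstar G$ is supported on the genuine Cartesian product of $X$ and $Y$ with their empty faces deleted (equipped with its own empty face and lcm labels), because for a generator of $(F\dstar G)_n$ one has $|f|+|g|=n+1$ with $|f|,|g|\ge 1$, so $\dim\sigma+\dim\tau=n-1$ --- exactly the right dimension, with no extra homological shift needed; the ``$-1$'' from passing to $\dstar$ and the ``$+1$'' in the join dimension cancel, contrary to what your write-up suggests.
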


\section{Algebra Structure on the Generalized Taylor Complex}\label{sec:DGAonTaylor}

In this section, we construct an explicit DG-algebra structure on the generalized Taylor complex $F^1 * \cdots * F^r$, assuming that each $F^i$ is a DG-algebra. This recovers the algebra structure on the classical Taylor resolution by choosing each $F^i$ to be a length $1$ complex. We then obtain a generalization of a result due to Katth\"an and use this to reprove a well-known example of Avramov does not have minimal free resolution admitting the structure of an associative DG-algebra.

\begin{setup}
Let $I_1 , \dots , I_r$ be a collection of monomial ideals. Let $F^i$ be a multigraded DG-algebra free resolution of $R/I_i$, for each $1 \leq i \leq n$. The product on each $F^i $ takes the following form:
\begingroup\allowdisplaybreaks
\begin{align*}
    &f_i \cdot_F f_i' = \sum_{a_i} \alpha^i_{a_i,f_i,f_i'} \frac{m_{f_i} m_{f_i'}}{m_{a_i}} \cdot a_i , 
\end{align*}
\endgroup
for coefficients $\alpha^i_{a_i,f_i,f_i'} \in k$.
\end{setup}

The following general lemma will be a very useful method for showing that many of the DG-algebra products in this paper satisfy the needed axioms without many intensive computations. 

\begin{lemma}\label{lem:DGAafterLocal}
Let $\phi : F  \to G $ be a morphism of complexes of free $R$-modules and $S \subset R$ a multiplicative subset. Assume furthermore that $\phi : F  \to G $ is a morphism of (not necessarily DG) algebras. If $F $ is a DG-algebra and $S^{-1} \phi$ is an isomorphism of complexes, then $G $ is a DG-algebra.
\end{lemma}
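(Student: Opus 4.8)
The plan is to transport the DG-algebra axioms from $F$ to $G$ along the isomorphism $S^{-1}\phi$, using that all the structure maps in sight (differentials, multiplications, and $\phi$ itself) commute with localization. First I would observe that since $F$ consists of free $R$-modules, the natural map $F \to S^{-1}F$ is injective, and similarly for $G$; this is the mechanism that will let me deduce identities in $G$ from identities in $S^{-1}G$. Next, form the localized morphism $S^{-1}\phi : S^{-1}F \to S^{-1}G$. By hypothesis this is an isomorphism of complexes; because $\phi$ is a morphism of (not necessarily DG) algebras, $S^{-1}\phi$ is also multiplicative, hence it is an isomorphism of graded algebras-with-differential. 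The key point is then that $S^{-1}F$ inherits the structure of a DG-algebra from $F$ (localization is exact and compatible with $\otimes_R$, so axioms (a)--(d) of Definition \ref{def:dga} and the Leibniz rule pass to $S^{-1}F$ verbatim), and transporting this structure across the isomorphism $S^{-1}\phi$ shows $S^{-1}G$ is a DG-algebra.

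It then remains to descend from $S^{-1}G$ to $G$. The graded-commutativity $x_ix_j = (-1)^{ij}x_jx_i$ and the relation $x_i^2 = 0$ for $i$ odd are identities between elements of $G$; since $G \hookrightarrow S^{-1}G$ and these identities hold in $S^{-1}G$, they hold in $G$. The same reasoning handles the Leibniz rule $d(x_ix_j) = d(x_i)x_j + (-1)^i x_i d(x_j)$: both sides are elements of $G$ agreeing after localization, hence equal. Associativity and unitality are argued identically. The only subtlety is the closure axiom (a), $G_iG_j \subseteq G_{i+j}$: the multiplication $G\otimes_R G \to G$ already present (from $G$ being an algebra) is $R$-linear and graded by hypothesis on $\phi$ being an algebra morphism — wait, I should be careful here, the cleanest route is simply to note that the multiplication map $G \otimes_R G \to G$ is the one we already have on the algebra $G$, and localizing it gives the multiplication on $S^{-1}G$ obtained by transport; since $G \otimes_R G \hookrightarrow S^{-1}(G\otimes_R G) = S^{-1}G \otimes_{S^{-1}R} S^{-1}G$ (again using freeness), the image of $G_i \otimes G_j$ lands in $G_{i+j} = G \cap S^{-1}G_{i+j}$ because it does so after localization.

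The main obstacle is making precise that ``the DG-algebra structure on $S^{-1}G$ restricts to one on $G$'': one must be sure that the multiplication being transported along $S^{-1}\phi$ actually agrees, on the subalgebra $G \subseteq S^{-1}G$, with the multiplication $G$ already carries. This is exactly where the hypothesis that $\phi$ is an algebra morphism (not merely a chain map) is used: it guarantees $S^{-1}\phi$ intertwines the given product on $S^{-1}G$ with the one on $S^{-1}F$, so the transported product and the original product on $S^{-1}G$ coincide, and hence their common restriction to $G$ is the original product on $G$. Once this compatibility is pinned down, every axiom is an equation between honest elements of $G$ that is already known to hold in the overring $S^{-1}G$, and injectivity of $G \to S^{-1}G$ (freeness of $G$ over $R$) finishes the argument.
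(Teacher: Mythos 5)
Your proof is correct and takes essentially the same approach as the paper: transport the DG-algebra identities from $F$ to $S^{-1}G$ via the isomorphism $S^{-1}\phi$ (using that $\phi$ is a morphism of algebras), then descend to $G$ via the injectivity of $G\hookrightarrow S^{-1}G$, which holds because $G$ is free and therefore torsion-free. The paper phrases the last step as ``some $s\in S$ annihilates the defect, and $G$ has no torsion,'' which is the same observation.
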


\begin{proof}
By functoriality of localization and the assumption that $S^{-1} \phi$ is an isomorphism, one has
$$S^{-1} d^G = S^{-1} \phi \circ S^{-1} d^F \circ (S^{-1} \phi)^{-1}.$$
Since $F $ is assumed to be a DG-algebra and $\phi$ is a morphism of algebras, one uses the above equality to see that some $s \in S$ annihilates
$$d^G (g \cdot_G g' ) - d^G (g) \cdot_G g - (-1)^{|g|} g \cdot_G d^G (g').$$
Since $G $ is a complex of free $R$-modules, there are no torsion elements in any homological degree, whence the above expression is $0$. Likewise, the associativity of $\cdot_F$ implies that any term of the form $(g \cdot_G g') \cdot_G g'' - g \cdot_G (g' \cdot_G g'')$ is annihilated by some $s \in S$. By identical reasoning, the product $\cdot_G$ is associative.
\end{proof}

The following definition comes from \cite{katthan2019structure}; the fact that this definition is well-defined is proved in Lemma $1.4$ of that paper.

\begin{definition}
Let $V$ be a multigraded free $R$-module where all basis elements of $V$ have squarefree multidegree. Then every $v \in V$ may be written $v = r v'$ for a monomial $r \in S$ and an element $v'$ with squarefree multidegree; both $r$ and $v'$ are uniquely defined. 

The \emph{squarefree part} of $v$, denoted $v^{\sq}$, is defined to be the unique squarefree element $v'$ as above.
\end{definition}

The following proposition can be stated more conceptually by saying that a DG-algebra minimal free resolution of the quotient by a squarefree monomial ideal is essentially generated in homological degree $1$. 

\begin{prop}[{\cite[Proposition 3.4]{katthan2019structure}}]\label{prop:sqfreeGeneration}
Let $I \subset R$ be a squarefree monomial ideal with DG-algebra multigraded minimal free resolution $F$. Then every $f \in F$ of squarefree degree (with $|f| \geq 1$) may be written
$$f = \sum_j (g_{1,j} \cdot_F \cdots \cdot_F g_{|f|,j} )^{\sq}$$
for $g_{\ell,j} \in F_1$ for all $1 \leq \ell \leq |f|$ and $j$.
\end{prop}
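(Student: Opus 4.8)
The plan is to induct on the homological degree $|f|$. The base case $|f| = 1$ is immediate, since any $f \in F_1$ of squarefree degree is already of the required form (an empty-or-trivial product, i.e. $f = g^{\sq}$ with $g = f$). For the inductive step, suppose the statement holds for all squarefree-degree elements of homological degree less than $|f|$, and let $f \in F$ have squarefree multidegree $m_f$ with $|f| \geq 2$. The key idea is to produce, from $f$, a squarefree-degree element in lower homological degree by "peeling off" a degree-$1$ factor, and the natural tool is the differential together with the Leibniz rule; alternatively one uses the structure of $F$ as a minimal resolution of a squarefree ideal. Concretely, I would look at the multidegree $m_f$: since $m_f$ is squarefree and $R/I$ is resolved minimally, there is some variable $x_\ell$ dividing $m_f$, and one wants to find $g \in F_1$ with $m_g$ dividing $m_f$ such that $f$ appears (after taking squarefree parts) in a product $g \cdot_F h$ for some $h$ of homological degree $|f|-1$ and squarefree degree $m_f / m_g$ (times a unit-adjusting monomial).

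The cleanest route I expect to work: consider the subcomplex/quotient behavior under restriction of the multigrading. Because $F$ is a minimal multigraded free resolution of the squarefree ideal $I$, the strand of $F$ in multidegrees dividing $m_f$ is itself (up to the relevant truncation) a minimal free resolution of a quotient by a squarefree monomial ideal in fewer variables, and in that strand $f$ sits in top-or-near-top homological degree. One then invokes that for a squarefree monomial ideal the product map $F_1 \otimes F_{|f|-1} \to F_{|f|}$ is, after passing to squarefree parts, surjective onto the squarefree-degree part of $F_{|f|}$ — this is essentially the content being proved, so the induction must be set up so that this surjectivity in degree $|f|$ follows from the inductive hypothesis in degree $|f|-1$ applied inside the smaller-variable strand. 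Writing $f$ as a $k$-linear combination of such products $g_{1} \cdot_F h$ with $|h| = |f|-1$, and then applying the inductive hypothesis to each $h$ to expand it as a sum of squarefree parts of products of degree-$1$ elements, gives the claimed expansion for $f$ after one more application of $(-)^{\sq}$ and the observation that $(\,(g_1 \cdots g_{|f|})^{\sq}\,)$ is unchanged by inserting/removing the intermediate squarefree-part operations (since taking squarefree part is idempotent and compatible with the monomial prefactors).

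The main obstacle I anticipate is precisely the compatibility of the squarefree-part operator with the multiplication: one needs that $(g_1 \cdot_F h)^{\sq}$, where $h$ is itself replaced by $\sum_j (g_{2,j} \cdots g_{|f|,j})^{\sq}$, can be rewritten as $\sum_j (g_1 \cdot_F g_{2,j} \cdots \cdot_F g_{|f|,j})^{\sq}$. This requires a small lemma (essentially Lemma 1.4 of \cite{katthan2019structure}, which is cited as guaranteeing well-definedness of $v^{\sq}$) saying that the squarefree part of a product depends only on the squarefree parts of the factors, up to the monomial that records the non-squarefree discrepancy; since all the basis elements in play have squarefree multidegree and $F$ is multigraded, the multidegrees bookkeep exactly which monomial prefactor appears, so this should go through. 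The second, more structural obstacle is justifying the surjectivity of the product onto squarefree-degree elements in degree $|f|$; if a direct argument via restriction to a smaller polynomial ring is awkward, the alternative is to argue by contradiction using minimality — if some squarefree-degree $f \in F_{|f|}$ were not in the span of such products, one could split it off and contradict either acyclicity or the squarefree-degree structure of $\Tor^R(R/I,k)$, which is concentrated in the "right" range by Hochster's formula. I would aim to keep the argument self-contained by packaging the needed surjectivity as the inductive statement itself, so that the only external input is the well-definedness of $(-)^{\sq}$.
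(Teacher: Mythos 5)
The paper does not actually prove this statement: it is quoted verbatim from Katth\"an (cited as \cite[Proposition 3.4]{katthan2019structure}) and used as a black box in the proof of Proposition~\ref{prop:DGquotient}. So there is no in-paper proof to compare against; I will evaluate your attempt on its own terms.

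Your write-up is a plan rather than a proof, and the plan has a genuine gap at the very place you flag. The induction reduces the claim to the following surjectivity: every squarefree-degree $f\in F_i$ with $i\geq 2$ lies in the span of elements $(g\cdot_F h)^{\sq}$ with $g\in F_1$, $h\in F_{i-1}$ of squarefree degree. Neither of the two routes you sketch establishes this. The restriction to the strand $F_{\leq m_f}$ (which is indeed a minimal multigraded free resolution of the subideal generated by the generators dividing $m_f$, and inherits the DG structure) only normalizes $m_f$ to be the product of all variables; it does not change the homological degree $i$, so you are left with exactly the same ``generated in degree $1$'' question inside the smaller ring, and the inductive hypothesis in degree $i-1$ does not speak to it. The Hochster-formula contradiction is also not enough: Hochster's formula identifies the graded vector-space structure of $\tor^R(R/I,k)$ in each squarefree degree, but gives no control over the multiplication $F_1\otimes F_{i-1}\to F_i$, and ``splitting off'' a non-product element of $F_i$ does not visibly contradict acyclicity, because $F_i$ could simply have a basis vector not hit by products without any tension with the Betti numbers.

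The argument that actually closes the gap (this is what Katth\"an does) is not an induction on $|f|$ at all. One builds a single multigraded chain map $\psi\colon T\to F$ from the Taylor complex $T$ of $I$ by sending a Taylor basis element $e_\sigma=e_{\sigma_1}\wedge\cdots\wedge e_{\sigma_k}$ to the appropriate monomial multiple of $(e_{\sigma_1}\cdot_F\cdots\cdot_F e_{\sigma_k})^{\sq}$ so that multidegrees match; the squarefree hypothesis on $I$ (via Hochster) guarantees the relevant squarefree parts land in the right multidegree, and one checks directly that this $\psi$ commutes with differentials and is an algebra morphism. Since $\psi$ is then a comparison map between a free resolution and a minimal free resolution of the same module, it is a split surjection of complexes. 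Surjectivity of $\psi$ is exactly the surjectivity you need, and from it the claimed expansion of $f$ follows by expressing a preimage in $T$ in the $e_\sigma$-basis and applying the compatibility of $(-)^{\sq}$ with products (the Lemma~1.4 ingredient you correctly identify). The key idea your attempt is missing is that minimality of $F$ makes \emph{any} chain map $T\to F$ lifting the identity on $R/I$ automatically surjective, so the problem is not to prove surjectivity by induction but to exhibit a chain map that is an algebra map with the desired form of image.
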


The following theorem shows, assuming each complex separately has an algebra structure, that the generalized Taylor complex also admits the structure of a DG-algebra.

\begin{theorem}\label{thm:DGAgenTaylor}
Let $I_1 , \dots , I_r$ be a collection of monomial ideals with multigraded DG-algebra free resolutions $F^i $ of $R/ I_i$, for each $1 \leq i \leq n$. Then the complex $F^1 * \cdots * F^r$ is a DG-algebra with product
$$(f_1 * \cdots * f_r)\cdot (f_1' * \cdots * f'_r) := \sum_{a_1 , \dots , a_r} \alpha^1_{f_1,f_1',a_1} \cdots \alpha^r_{f_r, f_r' , a_r} \frac{m_{f_1 * \cdots * f_r} m_{f_1' * \cdots * f_r'}}{m_{a_1* \cdots a_r}} a_1 * \cdots * a_r.$$
Moreover, this product is multigraded.
\end{theorem}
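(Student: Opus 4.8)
The plan is to verify the DG-algebra axioms of Definition~\ref{def:dga} for the proposed product on $F^1 * \cdots * F^r$ by exploiting Lemma~\ref{lem:DGAafterLocal}, which reduces everything to a statement about the localization at the variables of $R$. First I would observe that inverting all the variables $x_1,\dots,x_n$ (i.e.\ taking $S = \{x^{\mathbf{a}} : \mathbf{a} \in \bbn^n\}$) turns each $F^i$ into a complex over the localization $S^{-1}R$ in which every monomial becomes a unit; consequently $S^{-1}(F^1 * \cdots * F^r)$ becomes isomorphic to the ordinary tensor product $S^{-1}F^1 \otimes_{S^{-1}R} \cdots \otimes_{S^{-1}R} S^{-1}F^r$, compatibly with the proposed product, which after localization is simply the tensor product of the DG-algebra structures $\cdot_F$ on the $F^i$. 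Since each $F^i$ is assumed to be a DG-algebra, the tensor product $S^{-1}F^1 \otimes \cdots \otimes S^{-1}F^r$ is a DG-algebra by the standard construction (with the Koszul sign rule on the Leibniz identity), so $S^{-1}(F^1 * \cdots * F^r)$ is a DG-algebra. To apply Lemma~\ref{lem:DGAafterLocal} I need a morphism $\phi$; here the natural choice is the localization map $F^1 * \cdots * F^r \to S^{-1}(F^1 * \cdots * F^r)$ itself, viewed as a morphism of (non-DG) algebras whose target is already known to be DG, with $S^{-1}\phi$ an isomorphism — or, more directly, I would simply observe that the proof technique of Lemma~\ref{lem:DGAafterLocal} applies verbatim: the Leibniz and associativity defects of $\cdot$ on $F^1 * \cdots * F^r$ are annihilated by some $s \in S$ because they vanish after localizing, and since $F^1 * \cdots * F^r$ is free (hence torsion-free) over $R$ they vanish on the nose.

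The remaining axioms (a), (c), (d) are checked by inspection of the formula. Degree-additivity~(a) is immediate since $F^i_j * \cdot \subseteq (F^1 * \cdots * F^r)$ in the appropriate degree and the product only combines $a_1 * \cdots * a_r$ with $|a_k| = |f_k| + |f_k'|$ in the nonzero terms (because $\alpha^k_{f_k,f_k',a_k} = 0$ unless $|a_k| = |f_k|+|f_k'|$, as $\cdot_F$ respects the grading on $F^k$). Graded-commutativity~(c) and $x^2 = 0$ for $x$ of odd degree~(d) follow from the corresponding properties of each $\cdot_F$ together with the Koszul sign bookkeeping built into the definition of $(F^1 * \cdots * F^r)_n = \bigoplus_{i_1 + \cdots + i_r = n} F^1_{i_1} * \cdots * F^r_{i_r}$; concretely, one tracks that the sign acquired by swapping $f_1 * \cdots * f_r$ past $f_1' * \cdots * f_r'$ is $(-1)^{(\sum|f_k|)(\sum|f_k'|)}$, which matches what the localized tensor-product structure produces, so (c) and (d) transfer along the isomorphism $S^{-1}\phi$ exactly as in Lemma~\ref{lem:DGAafterLocal}. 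Unitality is witnessed by $e_1 * \cdots * e_r$ where $e_k \in F^k_0$ is the unit of $F^k$. Finally, multigradedness: every factor in the product formula carries an explicit multidegree coefficient $\frac{m_{f_1 * \cdots * f_r} m_{f_1' * \cdots * f_r'}}{m_{a_1 * \cdots * a_r}}$, and since $m_{f_1 * \cdots * f_r} = [m_{f_1}, \dots, m_{f_r}]$ and the $\alpha^k$ already encode multigraded maps, one checks the total multidegree of each term equals $m_{f_1 * \cdots * f_r} + m_{f_1' * \cdots * f_r'}$ (additively in $\bbz^n$), so the product is a multigraded homomorphism.

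The main obstacle I anticipate is making the localization-to-tensor-product identification completely precise and checking that it is compatible with the \emph{product}, not merely with the differential: one must verify that under the isomorphism $S^{-1}(F^1 * \cdots * F^r) \cong S^{-1}F^1 \otimes \cdots \otimes S^{-1}F^r$ the proposed product formula goes over exactly to the tensor-product multiplication $\bigotimes_k \cdot_{F^k}$ with its sign rule — this is where the monomial coefficients $\frac{m_{f} m_{f'}}{m_{a}}$ conspire to cancel the denominators that appear when monomials are inverted. Once that compatibility is in hand, everything else is either immediate from the formula or a transparent consequence of Lemma~\ref{lem:DGAafterLocal}. A secondary bookkeeping point is the sign convention: I would fix the identification of $F^1_{i_1} * \cdots * F^r_{i_r}$ with $F^1_{i_1} \otimes \cdots \otimes F^r_{i_r}$ up front and use it consistently so that the Koszul signs in axioms (b), (c), (d) are the ones the tensor product dictates, avoiding any sign ambiguity in the statement of the product.
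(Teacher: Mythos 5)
Your proposal is essentially the paper's proof: both localize at the variables so that the product is identified with the ordinary tensor-product DG-algebra structure on $F^1 \otimes \cdots \otimes F^r$, and both invoke Lemma~\ref{lem:DGAafterLocal} together with torsion-freeness to transfer the Leibniz and associativity identities back to $F^1 * \cdots * F^r$; the only caveat is that the precise $\phi$ the lemma needs is not the localization map (whose target is not a complex of free $R$-modules) but rather $\phi : F^1 \otimes \cdots \otimes F^r \to F^1 * \cdots * F^r$ sending $f_1 \otimes \cdots \otimes f_r \mapsto (m_{f_1}, \dots, m_{f_r}) f_1 * \cdots * f_r$. The compatibility check you flag as the ``main obstacle''---that the gcd/lcm coefficients make this $\phi$ an algebra morphism---is exactly the short computation the paper carries out explicitly in the two-factor case.
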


\begin{proof}
Recall that the morphism
\begingroup\allowdisplaybreaks
\begin{align*}
    \phi : F^1 \otimes \cdots \otimes F^r &\to F^1 * \cdots * F^r, \\
    f_1 \otimes \cdots \otimes f_r &\mapsto (m_{f_1} , \dots , m_{f_r}) f_1 * \cdots * f_r
\end{align*}
\endgroup
is a morphism of complexes. If $S$ denotes the multiplicative subset generated by all powers of the variables, then $S^{-1} \phi$ is an isomorphism of complexes. In view of Lemma \ref{lem:DGAafterLocal}, it suffices to show that $\phi$ is a morphism of algebras, where $F^1 \otimes \cdots \otimes F^r$ is endowed with the natural tensor product DG-algebra structure. By induction (and for ease of notation), it suffices only to consider the case $n=2$; one computes:
\begingroup\allowdisplaybreaks
\begin{align*}
    &\phi  (f_1 \otimes f_2)  \cdot_{F^1*F^2} \phi(f_1' \otimes  f_2') \\
   =& (m_{f_1} , m_{f_2}) \cdot (m_{f_1'} , m_{f_2'}) (f_1 * f_2) \cdot_{F^1*F^2} (f_1' * f_2') \\
    =&(m_{f_1} , m_{f_2}) \cdot (m_{f_1'} , m_{f_2'})  \sum_{a_1 , a_2} \alpha^1_{f_1 , f_1' , a_1} \alpha^2_{f_2 , f_2' , a_2} \frac{m_{f_1*f_2} m_{f_1'*f_2'}}{ m_{a_1*a_2}}  a_1 * a_2  \\
    =& \sum_{a_1 , a_2} \alpha^1_{f_1 , f_1' , a_1} \alpha^2_{f_2 , f_2' , a_2} \frac{m_{f_1}m_{f_1'}}{m_{a_1}} \cdot \frac{m_{f_2} m_{f_2'}}{a_2}  (m_{a_1} , m_{a_2}) a_1 * a_2  \\
    =& \phi ( f_1 \cdot_{F^1} f_1' \otimes f_2 \cdot_{F^2} f_2') \\
    =& \phi \big( (f_1 \otimes f_2) \cdot_{F^1 \otimes F^2} (f_1' \otimes f_2') \big).
\end{align*}
\endgroup
The fact that this product is multigraded is clear by construction.
\end{proof}

\begin{remark}
Observe that if $m_1 , \dots , m_r$ is a collection of monomials and $F_i = 0 \to R \xrightarrow{m_i} R$, then $T  := F_1 * \cdots * F_r$ is isomorphic to the Taylor complex on the ideal $(m_1 , \dots , m_r)$. Moreover, one can verify that the product endowed by Theorem \ref{thm:DGAgenTaylor} is precisely the standard DG-algebra structure on the Taylor complex.
\end{remark}

The next proposition is a direct generalization of \cite[Theorem 3.6]{katthan2019structure}, and is proved in an analogous fashion using the DG-algebra of Theorem \ref{thm:DGAgenTaylor}.

\begin{prop}\label{prop:DGquotient}
Let $I_1 , \dots , I_r$ be a family of squarefree monomial ideals such that $R / I_i$ admits a DG-algebra minimal free resolution $F^i $ for each $1 \leq i \leq r$. Assume that the minimal free resolution $F $ of $R/(I_1 + \cdots + I_r)$ is a multigraded DG-algebra. Then there exists a DG-ideal $J \subset T  :=  F^1* \cdots * F^r$ such that $F \cong  T / J$ as multigraded DG-algebras.
\end{prop}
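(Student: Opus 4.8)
The plan is to follow Katth\"an's proof of \cite[Theorem 3.6]{katthan2019structure}, with the generalized Taylor complex $T := F^1 * \cdots * F^r$ and its multigraded DG-algebra structure from Theorem \ref{thm:DGAgenTaylor} playing the role of the classical Taylor complex. Put $I := I_1 + \cdots + I_r$, again a squarefree monomial ideal; by iterating Theorem \ref{thm:genTaylorRes} the complex $T$ resolves $R/I$, and since $I$ and each $I_i$ are squarefree, every multigraded basis element of $F$, of each $F^i$, and hence of $T$, has squarefree multidegree. The whole statement reduces to producing a \emph{surjective} morphism of multigraded DG-algebras $\psi : T \to F$ lifting the identity of $R/I$: then $J := \ker\psi$ is tautologically a multigraded DG-ideal, $\psi$ is a quasi-isomorphism (both $T$ and $F$ resolve $R/I$, and $\psi$ lifts $\mathrm{id}_{R/I}$), and $T/J \cong \operatorname{im}\psi = F$ as multigraded DG-algebras.

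Two structural facts will drive the construction of $\psi$. First, each $F^i$ embeds as a sub-DG-algebra of $T$ via $f \mapsto f * 1 * \cdots * 1$ (with $f$ in the $i$-th slot), as one reads off from the product of Theorem \ref{thm:DGAgenTaylor}; moreover, inverting the multiplicative set $S$ generated by the variables turns the morphism $\phi$ of that theorem into an isomorphism $S^{-1}T \cong S^{-1}F^1 \otimes \cdots \otimes S^{-1}F^r$, and Proposition \ref{prop:sqfreeGeneration} applied to each $F^i$ shows that each $S^{-1}F^i$ is generated over $S^{-1}R$ as an algebra in homological degree $1$ (taking squarefree parts becomes multiplication by a unit). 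Hence $S^{-1}T$ is generated as an $S^{-1}R$-algebra by $T_1 = \bigoplus_i F^i_1$, just as for the classical Taylor complex. Second, Proposition \ref{prop:sqfreeGeneration} applied to $F$ shows that every element of $F$ of positive homological degree is a sum of squarefree parts of products of elements of $F_1$.

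To build $\psi$, I would first fix it in low degrees: $\psi_0 = \mathrm{id}_R$, and $\psi_1 : T_1 \to F_1$ sending the basis vector indexed by a generator $g$ of some $I_i$ to the basis vector of $F_1$ corresponding to $g$ when $g$ is a minimal generator of $I$, and to an appropriate monomial multiple of the basis vector of a minimal generator dividing $g$ otherwise, arranged so that $d^F \circ \psi_1 = \psi_0 \circ d^T$; since the minimal generators of $I$ occur among those of the $I_i$, this $\psi_1$ is surjective. Next, extend $\psi$ to each sub-DG-algebra $F^i \subseteq T$ by the squarefree-part formula $\psi\big((g_1 \cdots g_{|h|})^{\sq}\big) := \big(\psi_1(g_1) \cdots \psi_1(g_{|h|})\big)^{\sq}$ for $g_\ell \in F^i_1$, made linear via Proposition \ref{prop:sqfreeGeneration}. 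Because $F$ is graded-commutative, these sub-DG-algebra maps $F^i \to F$ assemble into an algebra map on $S^{-1}F^1 \otimes \cdots \otimes S^{-1}F^r \cong S^{-1}T$, and one checks that the squarefree normalization forces its image into $F$, giving $\psi : T \to F$; since $T$ and $F$ are torsion-free, the chain-map and multiplicativity identities for $\psi$ may be verified after inverting $S$, exactly as in the proof of Lemma \ref{lem:DGAafterLocal}. Surjectivity of $\psi$ then follows from the second structural fact together with surjectivity of $\psi_1$ and the compatibility of $\psi$ with products and with $(-)^{\sq}$.

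The hard part is the construction of $\psi$ itself: one must check that the squarefree-part formula is well defined despite the non-uniqueness of the expressions furnished by Proposition \ref{prop:sqfreeGeneration}, that it commutes with the differential, and---the point absent from the classical case---that the maps on the various sub-DG-algebras $F^i$ are mutually consistent on $T$, whose terms are \emph{not} an exterior algebra on $T_1$. As in \cite{katthan2019structure}, the technical heart is controlling how $d^F$ and the product of $F$ interact with the operation $(-)^{\sq}$; the reduction to $S^{-1}R$, where $T$ becomes an honest tensor product of the $F^i$, is what makes this tractable.
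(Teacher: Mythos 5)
Your proposal follows essentially the same route as the paper: define $\psi \colon T \to F$ via the squarefree-part formula $\psi(f_1 * \cdots * f_r) = \bigl(\psi(f_1) \cdot_F \cdots \cdot_F \psi(f_r)\bigr)^{\sq}$, use Proposition~\ref{prop:sqfreeGeneration} on both $T$ and $F$, verify the algebra and chain-map properties after inverting the variables, and take $J := \ker\psi$. Your explicit treatment of $\psi_1$ on generators of some $I_i$ that fail to be minimal generators of $I_1 + \cdots + I_r$ (sending them to monomial multiples of basis vectors of $F_1$) is a useful clarification of a point the paper's proof leaves implicit when it sets $\psi(g_1^i) := g_1^i$.
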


\begin{proof}
Let $S  := F^1 * \cdots * F^r$. Using the definition of the product in Theorem \ref{thm:DGAgenTaylor}, observe that for any choice of $f_i \in F^i$, one has
$$f_1 * \cdots * f_r = \frac{f_1 \cdot_S \cdots \cdot_S f_r}{(m_{f_1} , \dots , m_{f_r})} = ( f_1 \cdot_S \cdots \cdot_S f_r)^{\sq},$$
where each $f_i$ is identified with its image under the natural inclusion $F^i \subset S$. By Proposition \ref{prop:sqfreeGeneration}, one can write
$$f_i = \sum_j (g_{1,j}^i \cdot_{F^i} \cdots \cdot_{F^i} g_{|f_i|,j}^i )^{\sq},$$
where each $g_{k,j}^i$ is an element in homological degree $1$. It thus suffices to prove the statement for
$$f_i = (g_1^i \cdot_{F^i} \cdots \cdot_{F^i} g_{|f_i|}^i )^{\sq}.$$
To this end, define $\psi$ in homological degree $1$ via $\psi (g_1^i) := g_1^i$, interpreted as an element of the algebraic Scarf complex. Using this, let $\psi (f_i) := (\psi (g_1^i) \cdot_{F} \cdots \cdots_{F} \psi(g_{|f_i|}^i) )^{\sq}$ and
\begingroup\allowdisplaybreaks
\begin{align*}
    \psi (f_1 * \cdots * f_r) &:= \frac{\psi (f_1) \cdot_{F} \cdots \cdot_F \psi (f_r)}{(m_{f_1} , \dots , m_{f_r})} \\
    &= \big( \psi (f_1) \cdot_{F} \cdots \cdot_F \psi (f_r) \big)^{\sq}.
\end{align*}
\endgroup
By construction, $\psi$ is a morphism of algebras; moreover, since $\psi$ is surjective in homological degree $1$, one deduces from Proposition \ref{prop:sqfreeGeneration} that $\psi : S \to F$ is a surjection.
\end{proof}

We conclude this section with a quick application of Proposition \ref{prop:DGquotient}.

\begin{example}[See also {\cite[Example 3.7]{katthan2019structure}}]\label{ex:DGAexample}
Let $R = k[x_1 , \dots , x_6]$ and $I = (a,b,c,d,e)$ with $a = x_1 x_2$, $b = x_2 x_3$, $c = x_3 x_4$, $d = x_4 x_5$, and $e = x_5 x_6$. This is a variant of a well-known ideal defining a quotient ring with no associative DG-algebra minimal free resolution (see \cite[Example I of 2.2]{avramov1981obstructions}).

Observe that $I_1 := (a,b,c)$ and $I_2 := (d,e)$ are both minimally resolved by their corresponding algebraic Scarf complexes, denoted $S^1 $ and $S^2 $, respectively. Notice that in the Scarf complex, $g_a \cdot_{S^1} g_c = x_1 g_{b,c} + x_4 g_{a,b}$; using the algebra structure provided by Theorem \ref{thm:DGAgenTaylor},
$$(g_a * 1) \cdot_{S^1 * S^2} (g_c * g_{d,e} ) = g_{a,b} * g_{d,e} + x_4 g_{b,c} * g_{d,e}.$$
Assume that there exists a DG-ideal $J \subset S^1 * S^2$ such that  $S^1 * S^2 / J$ is the minimal free resolution of $R/I$. Notice that $g_c * g_{d,e}$ is the unique element of multidegree $x_3 x_4 x_5 x_6$ in $S^1 * S^2$. Since $\beta_{3, x_3 x_4 x_5 x_6} (R/ I ) = 0$, this means $g_c * g_{d,e}  \in J$; since $J$ is an ideal, this implies that $g_{a,b} * g_{d,e} + x_4 g_{b,c} * g_{d,e} \in J$. But $g_{a,b} * g_{d,e}$ is the unique element of $S^1 * S^2$ of multidegree $x_1 x_2 x_3 x_4 x_5 x_6$, so that after quotienting by $J$ one must have $\beta_{4 , x_1 x_2 x_3 x_4 x_5 x_6} (R/I) = 0$; however, direct computation with Macaulay2 shows that $\beta_{4 , x_1 x_2 x_3 x_4 x_5 x_6} (R/I) = 1$, a contradiction.
\end{example}

\section{A Generalization of the Scarf Complex}\label{sec:genScarfComplex}

In this section, we introduce a generalization of the Scarf complex; the Scarf complex was originally introduced by Scarf in \cite{scarf1973computation}. The first use of the Scarf complex for purposes of deducing information about resolutions of monomial ideals was in work by Bayer, Peeva, and Sturmfels \cite{bayer1998monomial}. Recall that the standard Scarf complex is constructed by restricting the Taylor complex to the subcomplex generated by all basis elements with \emph{unique} multidegree. We construct the generalized Scarf complex in a similar fashion, and prove that the generalized Scarf complex is always a direct summand of the minimal free resolution of the sum of the appropriate ideals. 

In the following, recall that the notation $G(I)$ for a monomial ideal denotes the unique minimal generating set of $I$ consisting of monic monomials.

\begin{definition}\label{def:genScarf}
Let $I_1 , \dots , I_r$ be a collection of monomial ideals with $G(I_i) \cap G(I_j) = \varnothing$ for $i \neq j$ and $F^i $ a minimal free resolution of $R / I_i$ for each $1 \leq i \leq r$. The \emph{generalized Scarf complex} $S $ on the family $I_1 , \dots , I_r$ is defined to be the subcomplex of $F  := F^1 * \cdot * F^r$ defined inductively via:
\begin{enumerate}
    \item $S_0 = R$ and $S_1 = F_1$, and
    \item $S_i$ is free on the multigraded basis elements $f_1 * \cdots * f_r \in F_i$ satisfying:
    \begin{enumerate}
        \item $d^F (f_1 * \cdots * f_r) \in S_{i-1}$, and
        \item $f_1* \cdots * f_r$ is the unique basis element of $F $ with multidegree $m_{f_1 * \cdots * f_r}$,
    \end{enumerate}
    for $i \geq 2$.
\end{enumerate}
\end{definition}

\begin{obs}
Let $I = (m_1 , \dots , m_r)$ be a monomial ideal. Then the standard Scarf complex of $I$ is obtained as the generalized Scarf complex on the family of ideals $(m_1) , \dots , (m_r)$.
\end{obs}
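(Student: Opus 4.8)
The plan is to reduce the statement to the standard identification of a generalized Taylor complex on length-one inputs with the classical Taylor complex, and then to check that the recursive selection rule of Definition~\ref{def:genScarf} specializes to the usual ``unique multidegree'' rule. Since $I_i=(m_i)$ is principal, its minimal free resolution is $F^i=(0\to R\xrightarrow{m_i}R)$, with one basis element in homological degree $1$ of multidegree $m_i$ and the basis element $1$ of multidegree $\mathbf 0$ in degree $0$. By construction (Definition~\ref{def:genTaylor}; see also the remark following Theorem~\ref{thm:DGAgenTaylor}), $F:=F^1*\cdots*F^r$ is then the Taylor complex $T$ on $I=(m_1,\dots,m_r)$: a basis element $f_1*\cdots*f_r$ of $F$ in homological degree $i$ corresponds to the Taylor basis element $e_\sigma$ with $\sigma=\{\,j: f_j\in F^j_1\,\}$, and one has $|\sigma|=i$ together with $m_{f_1*\cdots*f_r}=\lcm(m_j:j\in\sigma)=:m_\sigma$, the usual Taylor label. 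First I would fix this isomorphism and transport Definition~\ref{def:genScarf} across it, so that everything takes place inside $T$.

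Under this dictionary, the uniqueness-of-multidegree requirement in Definition~\ref{def:genScarf}(2) says exactly that $e_\sigma$ is the only basis element of $T$ carrying the label $m_\sigma$, i.e. that $m_\sigma=m_\tau$ forces $\sigma=\tau$ --- which is precisely the condition that $\sigma$ be a face of the Scarf simplicial complex $\Delta_I$ of $I$. (Here I take $m_1,\dots,m_r$ to be the minimal monomial generators of $I$, consistently with the hypothesis $G(I_i)\cap G(I_j)=\varnothing$ in Definition~\ref{def:genScarf}; without minimality the comparison already fails in degree $1$, since $S_1:=F_1$.) Thus the generalized Scarf complex $S$ involves only basis elements $e_\sigma$ with $\sigma\in\Delta_I$, and the whole content is that it involves \emph{all} of them --- equivalently, that the recursive condition $d^F(f_1*\cdots*f_r)\in S_{i-1}$ in Definition~\ref{def:genScarf}(2) imposes nothing new here. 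The key input is the classical fact (Bayer--Peeva--Sturmfels~\cite{bayer1998monomial}) that $\Delta_I$ is a simplicial complex: if $\sigma\in\Delta_I$ and $j\in\sigma$, then $\sigma\setminus\{j\}\in\Delta_I$ --- if instead $m_{\sigma\setminus\{j\}}=m_\tau$ for some $\tau\neq\sigma\setminus\{j\}$, then $m_{\tau\cup\{j\}}=\lcm(m_\tau,m_j)=\lcm(m_{\sigma\setminus\{j\}},m_j)=m_\sigma$, so $\sigma\in\Delta_I$ forces $\tau\cup\{j\}=\sigma$, whence $\tau=\sigma\setminus\{j\}$ or $\tau=\sigma$, the latter contradicting $m_\sigma=m_{\sigma\setminus\{j\}}$.

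I would then conclude by induction on homological degree. In degrees $0$ and $1$, $S_0=R$ and $S_1=F_1$ agree with the Scarf complex because (by minimality of the generators) every singleton is a face of $\Delta_I$. For the step, assume $S_{i-1}$ is spanned by $\{e_\tau:\tau\in\Delta_I,\ |\tau|=i-1\}$. If $\sigma\in\Delta_I$ with $|\sigma|=i$, the uniqueness condition holds, and $d^F(e_\sigma)$ is supported on the codimension-one faces $\{e_{\sigma\setminus\{j\}}:j\in\sigma\}$; each $\sigma\setminus\{j\}$ lies in $\Delta_I$ by the simplicial-complex property, hence each $e_{\sigma\setminus\{j\}}$ lies in $S_{i-1}$ by induction, so $d^F(e_\sigma)\in S_{i-1}$ and $e_\sigma\in S_i$; conversely every basis element of $S_i$ satisfies the uniqueness condition and so indexes a face of $\Delta_I$. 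Hence $S$ is, inside $T$, the subcomplex spanned by the faces of $\Delta_I$ with the restricted differential, which is by definition the standard Scarf complex of $I$. The argument is short; the only thing to be careful about is the translation of the recursive selection rule and of the multidegrees through the isomorphism $F\cong T$, together with recalling why the condition $d^F(\cdot)\in S_{i-1}$ becomes vacuous --- namely the downward-closedness of $\Delta_I$ --- and this downward-closedness is the one non-formal ingredient.
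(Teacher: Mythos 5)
Your proof is correct, and it matches the reasoning the paper implies: the paper does not spell out an argument for this observation, but the sentence immediately following it states exactly your key point, namely that for the standard Scarf complex ``restricting to the generators of the Taylor complex with unique multidegrees yields a subcomplex; this means it is unnecessary to check condition 2(a).'' Your contribution is to make this vacuity explicit via the downward-closedness of $\Delta_I$, which is the standard Bayer--Peeva--Sturmfels fact, and to set up the dictionary between $F^1*\cdots*F^r$ (with each $F^i$ of length one) and the classical Taylor complex; both are correct and align with the remark after Theorem~\ref{thm:DGAgenTaylor}.

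One small clarification worth flagging: the hypothesis $G(I_i)\cap G(I_j)=\varnothing$ in Definition~\ref{def:genScarf}, specialized to principal ideals $I_i=(m_i)$, only says the $m_i$ are pairwise distinct; it does not by itself say that $m_1,\dots,m_r$ is a minimal generating set for $I$ (e.g., it allows $m_1\mid m_2$). You correctly observe that minimality of the $m_i$ is needed for the comparison to hold already in degree $1$ (since Definition~\ref{def:genScarf} fixes $S_1=F_1$ unconditionally), but this minimality is a separate convention, not a consequence of the stated disjointness hypothesis. This is a reading point rather than a gap in your argument, and the observation is standardly stated under that convention anyway.
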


In the case of the standard Scarf complex, restricting to the generators of the Taylor complex with unique multidegrees yields a subcomplex; this means it is unnecessary to check condition $2 (a)$ in Definition \ref{def:genScarf}. In general, restricting to the unique multidegrees in the generalized Taylor complex does \emph{not} yield a subcomplex, so imposing $2(a)$ is necessary (see Example \ref{ex:qScarfEx}). 

\begin{prop}
The generalized Scarf complex $S $ of Definition \ref{def:genScarf} satisfies:
\begin{enumerate}
    \item $S $ is a subcomplex of $F^1 * \cdots * F^r$ satisfying $d^S (S_i) \subseteq \m S_{i-1}$ for all $i$, and
    \item $S $ is a direct summand of the minimal free resolution of $I_1 + \cdots + I_r$.
\end{enumerate}
\end{prop}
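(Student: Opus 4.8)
The plan is to prove (1) and (2) in turn, leveraging the structure of $S$ as a subcomplex built from unique-multidegree basis elements, and then invoking a general splitting principle for subcomplexes of minimal free resolutions that agree with the ambient complex on their supports.

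For part (1), that $S$ is a subcomplex is immediate from condition $2(a)$ in Definition \ref{def:genScarf}: we have built $S_i$ so that $d^F$ carries its basis into $S_{i-1}$, so $d^S := d^F|_S$ is well-defined and squares to zero. The content is the minimality statement $d^S(S_i) \subseteq \m S_{i-1}$. Here I would argue as follows. Suppose $f_1 * \cdots * f_r$ is a basis element of $S_i$ for $i \geq 2$; its differential is a sum of terms $\pm \frac{m_{f_1 * \cdots * f_r}}{m_{a_1 * \cdots * a_r}} \, a_1 * \cdots * a_r$ ranging over basis elements $a_1 * \cdots * a_r$ of $F_{i-1}$ that cover it in $\po(F)$. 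Since each $F^i$ is a \emph{minimal} free resolution, each individual differential $d^{F^i}$ has entries in $\m$; consequently, along every covering relation $a_\ell \lessdot f_\ell$ in $\po(F^\ell)$ we have $m_{a_\ell} \neq m_{f_\ell}$, hence $\deg m_{a_\ell} < \deg m_{f_\ell}$ (as $m_{a_\ell} \mid m_{f_\ell}$). Because a covering relation in $\po(F) = \prod_\ell \po(F^\ell)$ differs in exactly one coordinate, this forces $\deg m_{a_1 * \cdots * a_r} < \deg m_{f_1 * \cdots * f_r}$ (the lcm multidegrees are monotone and strictly increase in the coordinate that moved), so the coefficient $\frac{m_{f_1 * \cdots * f_r}}{m_{a_1 * \cdots * a_r}}$ is a nonunit monomial, i.e. lies in $\m$. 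This gives $d^S(S_i) \subseteq \m S_{i-1}$.

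For part (2), the key point is that $S$ sits inside $F^1 * \cdots * F^r$, which by Theorem \ref{thm:genTaylorRes} (iterated) is a free resolution of $R/(I_1 + \cdots + I_r)$; call it $\mathbb{F}$. The minimal free resolution $\mathbb{G}$ of $R/(I_1+\cdots+I_r)$ is a direct summand of $\mathbb{F}$, so $\mathbb{F} \cong \mathbb{G} \oplus \mathbb{P}$ with $\mathbb{P}$ a trivial (contractible) complex of free modules with identity-type differentials on a matched-up basis. I would then show that every basis element $f_1 * \cdots * f_r$ lying in $S_i$ survives into the minimal part $\mathbb{G}$. The mechanism is the unique-multidegree condition $2(b)$: a basis element $e$ of $\mathbb{F}_i$ of unique multidegree $m_e$ cannot be part of a cancelling pair in the decomposition $\mathbb{F} \cong \mathbb{G} \oplus \mathbb{P}$, because the differential is multigraded and a cancellation of $e$ (an entry $\pm 1$ in the differential matrix between $e$ and some $e'$ in adjacent degree) would require $m_{e'} = m_e$, contradicting uniqueness; thus $e$ must map to a basis element of $\mathbb{G}$ under the projection $\mathbb{F} \to \mathbb{G}$. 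Hence $S$ maps isomorphically onto a multigraded-basis subcomplex of $\mathbb{G}$, and since by part (1) $d^S$ has all entries in $\m$, this subcomplex is a direct summand of $\mathbb{G}$ (a pure-$\m$-differential subcomplex spanned by a subset of a multigraded basis splits off, as the complementary basis elements span a subcomplex as well because the differential is multigraded and unique-multidegree basis elements never appear in the differential of a non-unique one — here I'd use $2(a)$ again on the ambient resolution). Therefore $S$ is a direct summand of $\mathbb{G}$.

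The main obstacle is the bookkeeping in part (2): making precise that the unique-multidegree condition forces survival into the minimal resolution, and that the resulting image is genuinely a \emph{direct summand} rather than merely a subcomplex. The cleanest route is probably to work multidegree-by-multidegree — for a fixed multidegree $\alpha$, the strand of $\mathbb{F}$ in multidegree $\alpha$ is a finite complex of $k$-vector spaces, $S$ picks out exactly those one-dimensional strands that are unique and closed under $d^F$, and one checks these are precisely nonzero on homology/survive Gaussian elimination of the $\pm1$ entries. I would also need to confirm that the "complement" of $S$ inside $\mathbb{G}$ is itself a subcomplex, which again follows from multigradedness together with condition $2(a)$: no basis element of $\mathbb{G}$ outside $S$ can have a unique-multidegree, $d^F$-closed element in its differential image without that element already being in $S$. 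Modulo this, the argument is routine and parallels the classical Scarf case verbatim.
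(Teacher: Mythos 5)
Your argument for part (1) has a genuine gap.  You assert that along a covering relation $(a_1,\dots,a_r) \lessdot (f_1,\dots,f_r)$ in $\po(F) = \prod_\ell \po(F^\ell)$, the lcm multidegree strictly decreases because the single coordinate that moved strictly decreases.  This is false in general.  If $a_1 \lessdot f_1$ and $a_\ell = f_\ell$ for $\ell \geq 2$, the lcm $[m_{a_1}, m_{f_2},\dots,m_{f_r}]$ equals $[m_{f_1}, m_{f_2},\dots,m_{f_r}]$ whenever $m_{f_1}/m_{a_1}$ already divides $[m_{f_2},\dots,m_{f_r}]$.  Indeed, the paper's Observation in Section~\ref{sec:genTaylor} says precisely that the strict-decrease property along all covers of $\po(F)\times\po(G)$ is \emph{equivalent} to $I$ and $J$ being quasitransverse, which is not assumed in the Scarf setting.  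Your conclusion ($d^S(S_i)\subseteq\m S_{i-1}$) is correct, but the reason is condition 2(b), which your argument never invokes: since $f_1*\cdots*f_r$ is the \emph{unique} basis element of $F$ with its multidegree, any distinct basis element $a_1*\cdots*a_r$ appearing in $d^F(f_1*\cdots*f_r)$ must have a strictly smaller multidegree, forcing the coefficient into $\m$.  The paper dismisses part (1) as ``immediate from the definition'' for exactly this reason.

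For part (2), your approach is essentially the paper's: use the multigraded splitting of $F = F^1 * \cdots * F^r$ into the minimal free resolution plus trivial complexes $0 \to R(-m) \to R(-m) \to 0$, and observe that unique-multidegree basis elements can never be part of a cancelling pair, so they survive into the minimal summand.  You add a discussion of why the image is a \emph{direct summand} rather than merely a subcomplex, which the paper does not spell out; however, your justification for why the complement of $S$ inside the minimal resolution is itself a subcomplex (``unique-multidegree basis elements never appear in the differential of a non-unique one'') is asserted, not proved, and it is not self-evident.  That point deserves either a proof or an acknowledgment that it is being taken on faith, but you are at least flagging a subtlety the paper's own one-line proof glosses over.
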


\begin{proof}
Observe that $(1)$ is immediate from Definition \ref{def:genScarf}, stated explicitly for convenience. For $(2)$, the generalized Taylor complex splits as a direct sum of the minimal free resolution of $I_1 + \cdots + I_r$ and trivial complexes of the form $0 \to R ( -m) \to R(-m) \to 0$, where $m$ is some multidegree of $F^1 * \cdots * F^r$. Since each multidegree of the generalized Scarf complex is unique, it must be contained in the minimal free resolution. 
\end{proof}

In the following definition, we will tacitly assume that a multigraded minimal free resolution has been chosen for the quotient defined by each of the respective monomial ideals, since the generalized Scarf complex of Definition \ref{def:genScarf} depends on the choice of minimal free resolutions; recall that an ideal is \emph{scarf} if the associated algebraic Scarf complex is a minimal free resolution.

\begin{definition}
A family of monomial ideals $I_1 , \dots , I_r$ is \emph{quasiscarf} if the generalized Scarf complex is a minimal free resolution of $R/ (I_1 + \cdots + I_r)$. 
\end{definition}

\begin{obs}
Let $I = (m_1 , \dots , m_r)$ be a monomial ideal. Then $I$ is a scarf ideal if and only if the family of ideals $(m_1) , \dots , (m_r)$ is quasiscarf.
\end{obs}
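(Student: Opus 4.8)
The plan is to prove both directions by unwinding definitions and comparing the generalized Scarf complex built from the singleton families $(m_1),\dots,(m_r)$ with the classical (algebraic) Scarf complex of $I=(m_1,\dots,m_r)$. The key input is the earlier observation that the generalized Scarf complex on the family $(m_1),\dots,(m_r)$ \emph{is} the standard Scarf complex of $I$; thus the statement reduces to the assertion that for this particular family, the generalized Scarf complex is a minimal free resolution of $R/(I_1+\cdots+I_r)=R/I$ if and only if the algebraic Scarf complex of $I$ is a minimal free resolution of $R/I$. Since these two complexes coincide, and since $I_1+\cdots+I_r = (m_1)+\cdots+(m_r) = I$, the equivalence is essentially a tautology once the identification is in place.

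Concretely, first I would recall that for each $i$ the complex $F^i = 0\to R(-m_i)\xrightarrow{m_i} R$ is the minimal free resolution of $R/(m_i)$, so that $F^1*\cdots*F^r$ is isomorphic to the Taylor complex $T$ on $I=(m_1,\dots,m_r)$ (this is the remark following Theorem~\ref{thm:DGAgenTaylor}). Under this isomorphism a basis element $f_1*\cdots*f_r$ with each $f_i\in\{1,g_{m_i}\}$ corresponds to the Taylor basis element $e_\sigma$ indexed by the subset $\sigma=\{i: f_i = g_{m_i}\}$, with multidegree $\operatorname{lcm}(m_i : i\in\sigma)$. Second, I would check that Definition~\ref{def:genScarf} specializes, under this identification, exactly to the classical recipe ``restrict $T$ to the basis elements of unique multidegree.'' Condition $2(b)$ is visibly the uniqueness-of-multidegree condition; condition $2(a)$ is automatic here because in the classical Scarf situation the subcomplex spanned by unique-multidegree faces is already closed under the differential (as noted in the paragraph before Proposition~3.? in the excerpt). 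Hence the generalized Scarf complex on $(m_1),\dots,(m_r)$ equals the algebraic Scarf complex of $I$ as subcomplexes of $T$.

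With that identification, the equivalence is immediate: by definition the family $(m_1),\dots,(m_r)$ is quasiscarf iff its generalized Scarf complex is a minimal free resolution of $R/\big((m_1)+\cdots+(m_r)\big)=R/I$, which by the identification is exactly the statement that the algebraic Scarf complex of $I$ is a minimal free resolution of $R/I$, i.e.\ that $I$ is scarf. I would also remark that the minimality hypothesis in Definition~\ref{def:genScarf} is automatic in this case since each $F^i$ is a minimal free resolution. The only mild subtlety, and the step I would be most careful about, is verifying that condition $2(a)$ genuinely imposes nothing new for singleton families—equivalently, that the set of unique multidegrees in the Taylor complex of $I$ is downward closed under the covering relation of $\operatorname{po}(T)$; this is the standard fact underlying the well-definedness of the classical Scarf complex, and I would cite or briefly reprove it via the observation that if $\tau\subsetneq\sigma$ and $\operatorname{lcm}(\sigma)$ has multidegree equal to $\operatorname{lcm}(\tau)$ then $\sigma$ cannot have unique multidegree. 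No other step presents real difficulty.
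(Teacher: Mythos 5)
Your proposal is correct and follows the same route the paper implicitly intends: it reduces the claim to the preceding observation that the generalized Scarf complex on the singleton family $(m_1),\dots,(m_r)$ coincides with the standard algebraic Scarf complex of $I$, then unwinds the two definitions (noting $I_1+\cdots+I_r = I$ and that condition $2(a)$ is vacuous here, as the paper itself remarks). The paper states this as an observation without proof, so your careful spelling-out is a faithful and accurate expansion.
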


The following example serves to illustrate as an explicit example of an ideal whose minimal free resoution is more closely approximated by choosing a generalized Scarf complex instead of just the standard Scarf complex.

\begin{example}\label{ex:qScarfEx}
Let $R = k[x_1 , \dots ,x_6]$ and let $I$ be as in Example \ref{ex:DGAexample}. Let $I_1 = (a)$ and $I_2 = (b,c,d,e)$; let $F^1$ and $F^2$ be the multigraded minimal free resolutions of $R/I_1$ and $R/I_2$, respectively. Then, by only considering the unique multidegrees, one finds that there are $18$ unique multidegrees in $F^1 * F^2$ (compared with $15$ in the standard Scarf complex); however, restricting to these multidegrees does \emph{not} form a subcomplex of $F^1 * F^2$. If one restricts to the unique multidegrees that do form a subcomplex, there is one additional multidegree that is not included in the Scarf complex: the multidegree $x_2x_3x_4x_5x_6$, contributed by the term $1 * g$, where $g \in F^2_3$ is the unique multigraded basis element with multidegree $x_2 x_3 x_4 x_4 x_6$. One can compute the generalized Scarf complex associated to $F^1$ and $F^2$ as
\begingroup\allowdisplaybreaks
\begin{align*}&R^{1}\,\xleftarrow{\begin{pmatrix}
        {x}_{1}{x}_{2}&{x}_{2}{x}_{3}&{x}_{3}{x}_{4}&{x}_{4}{x}_{5}&{x}_{5}{x}_{6}\end{pmatrix}}\,R^{5}\\&R^{5}\,\xleftarrow{\begin{matrix}\vphantom{-{x}_{3}0-{x}_{4}{x}_{5}0-{x}_{5}{x}_{6}00}\\\vphantom{{x}_{1}-{x}_{4}000-{x}_{5}{x}_{6}0}\\\vphantom{0{x}_{2}0-{x}_{5}000}\\\vphantom{00{x}_{1}{x}_{2}{x}_{3}00-{x}_{6}}\\\vphantom{0000{x}_{1}{x}_{2}{x}_{2}{x}_{3}{x}_{4}}\end{matrix}\begin{pmatrix}
        -{x}_{3}&0&-{x}_{4}{x}_{5}&0&-{x}_{5}{x}_{6}&0&0\\
        \vphantom{\left\{2\right\}}{x}_{1}&-{x}_{4}&0&0&0&-{x}_{5}{x}_{6}&0\\
        \vphantom{\left\{2\right\}}0&{x}_{2}&0&-{x}_{5}&0&0&0\\
        \vphantom{\left\{2\right\}}0&0&{x}_{1}{x}_{2}&{x}_{3}&0&0&-{x}_{6}\\
        \vphantom{\left\{2\right\}}0&0&0&0&{x}_{1}{x}_{2}&{x}_{2}{x}_{3}&{x}_{4}\end{pmatrix}}\,R^{7}\\&R^{7}\,\xleftarrow{\begin{matrix}\vphantom{{x}_{5}{x}_{6}0}\\\vphantom{00}\\\vphantom{0{x}_{6}}\\\vphantom{00}\\\vphantom{-{x}_{3}-{x}_{4}}\\\vphantom{{x}_{1}0}\\\vphantom{0{x}_{1}{x}_{2}}\end{matrix}\begin{pmatrix}
        {x}_{5}{x}_{6}& 0 & 0 \\
        0&0 & -x_5 x_6\\
        0&{x}_{6} & 0\\
        0&0 & -x_2 x_6\\
        -{x}_{3}&-{x}_{4} & 0\\
        {x}_{1}&0 & x_4\\
        0&{x}_{1}{x}_{2} & -x_2x_3 \end{pmatrix}}\,R^{3}\\&R^{3}\,\xleftarrow{}\,0.\end{align*}
\endgroup
        Even though the above complex is not a minimal free resolution, it is closer to being a resolution than the standard Scarf complex.
\end{example}

Seeing that the generalized Scarf complex can be strictly larger than the standard Scarf complex, it is reasonable to assume that there are classes of ideals that are quasiscarf with respect to some splitting of the generators, even though the original ideal is not scarf. This motivates the following question:

\begin{question}
Are there easily describable quasiscarf families of ideals (distinct from scarf ideals)?
\end{question}

\section{Koszul Homology}\label{sec:KoszulHom}

In this section we investigate properties of the Koszul homology for (squarefree) families of quasitransverse ideals. We first deduce that arbitrary intersections of quasitransvere monomial ideals are always Golod. In the squarefree case, we give an explicit description of the generators of the Koszul homology for sums and intersections of quasitransverse families of monomial ideals and use these descriptions to deduce that other properties of Koszul homology will be preserved in these cases.

Throughout this section, $(R , \m)$ will denote a standard graded polynomial ring over a field. In the following statement, let $K $ denote the Koszul complex on the minimal generators of $\m$. The basis elements for $K_1$ will be denoted $e_1 , \dots , e_r$, and the notation $e_\sigma$ is shorthand for $e_{\sigma_1} \w \cdots \w e_{\sigma_\ell}$, where $\sigma = (\sigma_1 < \cdots < \sigma_\ell)$. Recall that if $M$ is an $R$-module, then the Koszul homology of $M$ is defined to be $H_\bullet (M \otimes K)$.

\begin{prop}\label{prop:golodQtrans}
Let $I$ and $J$ be quasitransverse monomial ideals. Then $I \cap J$ defines a Golod ring.
\end{prop}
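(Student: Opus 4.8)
The plan is to exhibit a trivial Massey operation on the Koszul complex of $R/(I\cap J)$, adapting the construction of Herzog and Steurich \cite{HerzogSteurich}, who built such an operation for an intersection of ideals out of resolutions of the two quotients. Fix multigraded minimal free resolutions $F$ of $R/I$ and $G$ of $R/J$. Since $I$ and $J$ are quasitransverse, Proposition~\ref{prop:resOfInt} gives that $F\dstar G$ is the minimal free resolution of $R/(I\cap J)$; by construction its strand in homological degree $n\geq 1$ is the strand of $F_{\geq 1}*G_{\geq 1}$ in homological degree $n+1$. Hence, as a multigraded vector space, $\tor^R_\bullet(R/(I\cap J),k)$ consists of $k$ in degree $0$ together with $\bigoplus_{i,j\geq 1}F_i*G_j$, the latter with homological degrees lowered by one; in particular $I\cap J$ is minimally generated by the least common multiples $[\,m_f,m_g\,]$ with $f\in F_1$ and $g\in G_1$.

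The key input from quasitransversity is twofold. First, no minimal generator of $I$ divides a minimal generator of $J$, and conversely: otherwise a covering relation $(f_0,g)\lessdot(f_1,g)$ in $\po(F)\times\po(G)$ would fail to strictly increase internal degree, contradicting the characterization of quasitransverse ideals via the product poset given in Section~\ref{sec:genTaylor}. Second, propagating this strictness up through $\po(F)\times\po(G)$, every basis element of $F\dstar G$ in homological degree $n$ has multidegree a least common multiple of minimal generators of $I\cap J$ whose internal degree strictly exceeds the internal degree of every basis element in smaller homological degree (so in particular each such generator lies in $\m I\cap\m J$). Granting this, I would argue as in the transverse case of Herzog and Steurich: a product of two Koszul homology classes in positive homological degrees $a$ and $b$ has multidegree a vector sum of two multidegrees of the above kind, hence a nontrivial product — rather than a least common multiple — of minimal generators of $I\cap J$, and such a multidegree does not occur among the basis elements of $F\dstar G$ in homological degree $a+b$, where the product would have to land; the same accounting annihilates all higher Massey products. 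Thus $\tor^R_{\geq 1}(R/(I\cap J),k)$ carries the trivial product, a trivial Massey operation exists, and $R/(I\cap J)$ is Golod.

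The step I expect to be the main obstacle is precisely this multidegree bookkeeping: one must identify exactly which multidegrees occur in which homological degree of $F\dstar G$ (equivalently, of $F_{\geq 1}*G_{\geq 1}$) and verify that the vector sum of two multidegrees appearing in homological degrees $a$ and $b$ is never a multidegree appearing in homological degree $a+b$. The strict growth of internal degree supplied by quasitransversity, together with the shift by one relating $F\dstar G$ to $F*G$, is exactly what makes this go through; without the quasitransverse hypothesis $F*G$ need not be minimal and the argument collapses. The statement for an arbitrary quasitransverse family $I_1,\dots,I_r$ then follows by iterating Proposition~\ref{prop:resOfInt} (cf.\ the remark following it), the iterated construction still enjoying the shift-by-one and strict-growth features used above.
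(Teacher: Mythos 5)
Your proposal takes a genuinely different (and substantially longer) route than the paper, and the step you flag as ``the main obstacle'' is not carried out and, as sketched, contains real gaps. The paper's proof is much shorter: it invokes the theorem of Herzog and Steurich (the very result you allude to) as a black box, which reduces Golodness of $R/(I\cap J)$ to the \emph{injectivity} of the natural map $\tor^R(R/I,k)\oplus\tor^R(R/J,k)\to\tor^R(R/(I+J),k)$; no trivial Massey operation needs to be built by hand. Since $F$ and $G$ are direct summands of $F*G$ via $f\mapsto f*1$ and $g\mapsto 1*g$, and quasitransversity makes $F*G$ minimal, this Tor map is literally the direct-summand inclusion $(F\oplus G)\otimes k\hookrightarrow(F*G)\otimes k$, which is injective, and the proof ends there.

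The multidegree bookkeeping you defer is genuinely problematic, not merely tedious. Quasitransversity gives strict growth of internal degree only along \emph{covering relations} in $\po(F)\times\po(G)$; it does not yield that every multidegree occurring in homological degree $n$ of $F\dstar G$ strictly exceeds every multidegree occurring in lower homological degree, so the ``strictness propagated up the poset'' you invoke is unjustified. The claim that a product (vector sum) of two multidegrees of the resolution can never itself be a multidegree of the resolution also fails when the two multidegrees are coprime, since then the product coincides with the least common multiple, which may well occur among the $[m_{f_i},m_{g_j}]$. Finally, even granting $H_{\geq 1}^2=0$, that alone does not produce a trivial Massey operation: one must still construct the higher values $\mu(h_{\lambda_1},\dots,h_{\lambda_p})$ with the prescribed boundaries, and ``the same accounting annihilates all higher Massey products'' is not a substitute for that construction. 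All of these difficulties evaporate once one applies the Herzog--Steurich criterion directly, as the paper does.
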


\begin{proof}
Let $F $ and $G $ denote minimal free resolutions of $R/I$ and $R/J$, respectively. By work of Herzog and Steurich (see \cite{herzog1979golodideale}), it suffices to show that the induced map
$$H_\bullet (R/I) \oplus H_\bullet (R/J) \to H_\bullet (R/I+J)$$
is an injection. The above map on homology can be identified with the induced morphism of complexes
$$F  \otimes k \oplus G  \otimes k \to F * G \otimes k.$$
This map is evidently an injection, however, since it can be identified as the inclusion of the direct summands $F $ and $G $ into $F * G $; tensoring with $k$ preserves injectivity.
\end{proof}

\begin{notation}
Let $I$ be an ideal. A cycle in the Koszul algebra $K  \otimes R/I$ with homological degree $i$ will be denoted by $z_i^I$. The class of a cycle $z_i^I$ in the homology algebra will be denoted $[ z_i^I ]_{R/I \otimes K}$, or just $[z_i^I]$ when no confusion can occur.
\end{notation}

The following lemma shows that quasitransverse ideals satisfy a ``rescaled" version of the K\"unneth formula (one \emph{could} call it a quasik\"unneth formula, but we will not).

\begin{lemma}\label{lem:theKoszulHom}
Let $I$ and $J$ be quasitransverse squarefree monomial ideals. Then the map
\begingroup\allowdisplaybreaks
\begin{align*}
    \bigoplus_{i+j = r} H_i (R/I) \otimes_k H_j (R/J) &\to H_r (R/(I+J)) \\
    [z_i^I] \otimes [z_j^J] &\mapsto \Big[ \frac{z_i^I \w z_j^J}{( m_{z_i^I},m_{z_j^J})} \Big] \\
\end{align*}
\endgroup
is an isomorphism of vector spaces.
\end{lemma}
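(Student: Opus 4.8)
The strategy is to leverage Proposition~\ref{prop:resOfInt}, Theorem~\ref{thm:genTaylorRes}, and the quasitransverse hypothesis to reduce everything to a statement about the generalized Taylor complex, where the K\"unneth-type splitting is essentially built in. Let $F$ and $G$ be the minimal free resolutions of $R/I$ and $R/J$, so that by quasitransverseness $F*G$ is the minimal free resolution of $R/(I+J)$. The Koszul homology $H_\bullet(R/(I+J))$ is then computed as $\Tor^R_\bullet(R/(I+J),k) = H_\bullet(F*G \otimes_R k)$, and similarly for $R/I$ and $R/J$. The first step is to make precise the claim that the differentials of $F*G \otimes_R k$ vanish (minimality), so that $H_r(R/(I+J)) \cong (F*G)_r \otimes_R k$, and that under the identification $(F*G)_r = \bigoplus_{i+j=r} F_i * G_j$ each summand $F_i * G_j \otimes_R k$ matches $(F_i \otimes_R k) \otimes_k (G_j \otimes_R k) = H_i(R/I) \otimes_k H_j(R/J)$ (again by minimality of $F$ and $G$). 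So on the level of vector spaces the map is manifestly an isomorphism provided one checks it is the right map.

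\textbf{Identifying the map.} The substantive part is to verify that the abstract isomorphism coming from the above bookkeeping is realized by the stated formula $[z_i^I] \otimes [z_j^J] \mapsto \big[\tfrac{z_i^I \w z_j^J}{(m_{z_i^I}, m_{z_j^J})}\big]$. Here one must recall how a basis element $f$ of the minimal free resolution $F$ corresponds to a Koszul cycle: via a comparison map $F \to K$ (the Koszul complex on $x_1,\dots,x_n$), a multigraded basis element $f$ of multidegree $m_f$ maps to a cycle $z_f$ of multidegree $m_f$ in $K \otimes R/I$, and $[z_f]$ runs over a $k$-basis of $H_{|f|}(R/I)$. The point is then that the comparison map $F*G \to K$ can be taken to be the ``rescaled wedge'': $f * g \mapsto \tfrac{z_f \w z_g}{(m_f, m_g)}$, which is well-defined because $m_{f*g} = [m_f, m_g] = \tfrac{m_f m_g}{(m_f,m_g)}$ so the element $\tfrac{z_f \w z_g}{(m_f,m_g)}$ sits in multidegree $m_{f*g}$. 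One checks this is a chain map directly against the differential formula in Definition~\ref{def:genTaylor} — the factor $(-1)^{|f|}$ and the $\tfrac{m_{f*g}}{m_{a*g}}$, $\tfrac{m_{f*g}}{m_{f*b}}$ coefficients are exactly what is needed to match the Leibniz rule for the wedge differential on $K$ after rescaling — and that it restricts correctly to the comparison maps $F \to K$ and $G \to K$ on the respective factors (these are the $i=0$ or $j=0$ strands). Passing to homology gives exactly the stated map.

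\textbf{Main obstacle.} I expect the only real friction to be the well-definedness and chain-map verification for the rescaled comparison map $f*g \mapsto \tfrac{z_f \w z_g}{(m_f,m_g)}$: one needs the cycles $z_f, z_g$ to be chosen compatibly (multigraded, of the correct multidegrees) so that the rescaling produces a genuine element of $K \otimes R/(I+J)$ rather than something with denominators, and one needs the signs and monomial ratios to match. The squarefree hypothesis is what guarantees that all the relevant multidegrees are squarefree and that $(m_f, m_g)$ divides things the way one wants, keeping every rescaling an honest monomial operation; this is presumably why the lemma is stated in the squarefree case. Once the chain map is in place, surjectivity and injectivity are automatic from minimality (the map is a bijection already at the chain level after tensoring with $k$), so there is no separate rank computation to do. Finally, a remark on bases: since both sides are computed from bases of the minimal resolutions and $[z_i^I]$, $[z_j^J]$ range over bases of $H_i(R/I)$, $H_j(R/J)$, the map visibly sends a $k$-basis to a $k$-basis, giving the isomorphism of vector spaces.
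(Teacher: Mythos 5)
Your overall framework matches the paper's: quasitransverseness gives minimality of $F*G$, so $H_r(R/(I+J)) \cong (F*G)_r\otimes k$, and the block decomposition $(F*G)_r = \bigoplus_{i+j=r} F_i*G_j$ together with minimality of $F$ and $G$ immediately gives the abstract K\"unneth splitting. The substantive content — which you correctly isolate as the ``main obstacle'' — is pinning down that the induced isomorphism is realized by the stated rescaled-wedge formula. Here your route diverges from the paper's, and it is worth being clear about what each is doing and where yours is incomplete.

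The paper does not construct a chain comparison map at all; it invokes the fact that, because $F*G$ carries a DG-algebra structure (Theorem~\ref{thm:DGAgenTaylor}), the identification $F*G\otimes k \cong H_\bullet(K\otimes R/(I+J))$ is an isomorphism of \emph{algebras}. It then observes that the summand inclusions $F\hookrightarrow F*G$ and $G\hookrightarrow F*G$ are compatible with $K\otimes R/I \to K\otimes R/(I+J)$ and $K\otimes R/J\to K\otimes R/(I+J)$, so that $\overline{f*1}\mapsto z_i^I$ and $\overline{1*g}\mapsto z_j^J$, and finally uses the product identity $(f*1)\cdot(1*g) = (m_f,m_g)\,f*g$ to deduce the image of $\overline{f*g}$. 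This is precisely the point where the DG-algebra structure does the work; without it one has no control over $\overline{f*g}$ beyond its summands in low homological degree.

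Your alternative is to posit a chain map $F*G\to K\otimes R/(I+J)$ given by $f*g \mapsto z_f\wedge z_g / (m_f,m_g)$, ``checked directly'' against Definition~\ref{def:genTaylor}. There are two problems with this as stated. First, there is no $R$-linear chain map $F\to K$ under which a basis element $f$ goes to the Koszul cycle $z_f$: the identification $F\otimes k \cong H(K\otimes R/I)$ arises from a zig-zag in the double complex $F\otimes_R K$, not from a naive map between the resolutions, so the phrase ``comparison map $F\to K$'' is already misleading. Second, the map you write down sends basis elements to \emph{cycles} of $K\otimes R/(I+J)$, so $d^K\circ\psi = 0$; but $\psi\circ d^{F*G}(f*g)$ is manifestly nonzero (it is a combination of terms $z_a\wedge z_g/(m_a,m_g)$ etc.\ with nonzero monomial coefficients), so $\psi$ cannot literally be a chain map out of $F*G$. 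What you actually want is a map $F*G\otimes k\to K\otimes R/(I+J)$ landing in cycles and inducing the Tor isomorphism — which is a legitimate object, but proving that it has the stated form is exactly the content of the lemma, not something one ``checks directly against the differential formula.'' This is the gap: you have re-stated the problem rather than solved it. The paper's reliance on the multiplicative structure of Theorem~\ref{thm:DGAgenTaylor} is the tool that replaces this missing verification, and it is worth noting that your proposal never invokes that theorem, even though it is the main structural input the lemma depends on.
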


\begin{proof}
First, it is necessary to show that $\frac{z_i^I \w z_j^J}{(m_{z_i^I},m_{z_j^J})}$ is well-defined. Write $z_i^I = \sum r_\sigma e_\sigma$, $z_j^J = \sum r_\tau e_\tau$; using the squarefree assumption, one finds
$$\frac{z_i^I \w z_j^J}{(m_{z_i^I},m_{z_j^J})} = \sum_{\substack{\sigma , \tau \\
\sigma \cap \tau = \varnothing \\}} [r_ \sigma , r_\tau ] e_\sigma \w e_\tau,$$
which is a well-defined multigraded cycle in $K  \otimes R/(I+J)$. To complete the proof, it suffices to show that all elements of the form $\frac{z_i^I \w z_j^J}{(m_{z_i^I},m_{z_j^J})}$ descend to a basis in homology, assuming $z_i^I$ and $z_j^J$ represent basis elements in their respective homology algebras.

Recall that the isomorphism $F*G \otimes k \cong H_\bullet (R/(I+J) \otimes K )$ is also an isomorphism of algebras. Let $f$ and $g$ denote representatives for the images of $z_i^I$ and $z_j^J$ under the isomorphisms $H_\bullet (R/I \otimes K) \cong F \otimes k$ and $H_\bullet (R/ J \otimes K) \cong G \otimes k$, respectively. Since $f*g = \frac{(f*1) \cdot (1*g)}{(m_f , m_g)}$ by definition of the product of Theorem \ref{thm:DGAgenTaylor}, it follows that the image of $\overline{f*g}$ under the above isomorphism is precisely $\frac{z_i^I \w z_j^J}{(m_f , m_g)}$; since $f *g$ represents a basis element in $F*G \otimes k$, the result follows.


\end{proof}

\begin{cor}\label{cor:KoszulforInt}
Let $I$ and $J$ be quasitransverse squarefree monomial ideals. Then the map
\begingroup\allowdisplaybreaks
\begin{align*}
    \bigoplus_{\substack{i+j = r+1 \\
    i,j \geq 1 \\}} H_i (R/I) \otimes_k H_j (R/J) &\to H_r (R/I \cap J)) \\
    [z_i^I] \otimes [z_j^J] &\mapsto \Big[ \frac{d(z_i^I) \w z_j^J}{(m_{z_i^I},m_{z_j^J})} \Big] \\
\end{align*}
\endgroup
is an isomorphism of vector spaces for all $i \geq 1$.
\end{cor}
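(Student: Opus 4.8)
The plan is to deduce Corollary \ref{cor:KoszulforInt} from Lemma \ref{lem:theKoszulHom} by exploiting the same short exact sequence of complexes that appeared in the proof of Proposition \ref{prop:resOfInt}, only now tensored with $k$. Concretely, recall from Proposition \ref{prop:resOfInt} that $F \dstar G$ is the minimal free resolution of $R/(I\cap J)$ (using quasitransverseness and minimality of $F$, $G$), so $H_\bullet(R/(I\cap J)\otimes K)$ is computed by $(F\dstar G)\otimes k$ as a graded vector space. On the other hand, $H_\bullet(R/(I+J)\otimes K)$ is computed by $(F*G)\otimes k$, and Lemma \ref{lem:theKoszulHom} identifies the latter, via the rescaled K\"unneth map, with $\bigoplus_{i+j=r} H_i(R/I)\otimes_k H_j(R/J)$. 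So the task is to transport that identification across the relationship between $F\dstar G$ and $F*G$.

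The key comparison is the short exact sequence
$$0 \to F_{\geq 1} \oplus G_{\geq 1} \to (F*G)_{\geq 1} \to (F\dstar G)_{\geq 1}[-1] \to 0$$
from the proof of Proposition \ref{prop:resOfInt}. First I would tensor this with $k$ over $R$; since all three complexes are complexes of free modules and the sequence is degreewise split (it is a sequence of free modules that splits after inverting the variables, hence the cokernel is free, hence it splits), tensoring with $k$ preserves exactness, yielding a short exact sequence of complexes over $k$. Taking the long exact sequence in homology and using that the connecting maps in the original sequence correspond, under the identifications $H_\bullet(F\otimes k)\cong H_\bullet(R/I\otimes K)$ etc., to the maps computed explicitly in Proposition \ref{prop:resOfInt} — namely $[f_1*g_1]\mapsto(-\tfrac{m_{f_1*g_1}}{m_{f_1}}[f_1],\ \tfrac{m_{f_1*g_1}}{m_{g_1}}[g_1])$ — one reads off that $H_r((F\dstar G)_{\geq 1}\otimes k)$ is the cokernel of the inclusion-of-summands map $H_{r+1}(F\otimes k)\oplus H_{r+1}(G\otimes k)\hookrightarrow H_{r+1}((F*G)\otimes k)$ in each degree $r+1\geq 2$. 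By Lemma \ref{lem:theKoszulHom} this cokernel is exactly $\bigoplus_{i+j=r+1,\ i,j\geq 1} H_i(R/I)\otimes_k H_j(R/J)$, the summands with $i=0$ or $j=0$ being precisely the image of the two included copies. Tracking a basis element $f*g$ with $|f|=i\geq 1$, $|g|=j\geq 1$ through the connecting homomorphism of the long exact sequence shows it corresponds to the boundary of $f*g$ inside $(F\dstar G)$, which on the Koszul side is the class $\big[\tfrac{d(z_i^I)\w z_j^J}{(m_{z_i^I},m_{z_j^J})}\big]$ after applying the algebra isomorphism and the formula $f*g=\tfrac{(f*1)\cdot(1*g)}{(m_f,m_g)}$ exactly as in the proof of Lemma \ref{lem:theKoszulHom}; this pins down the stated map and shows it is an isomorphism.

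The main obstacle I anticipate is bookkeeping rather than conceptual: one must be careful that the degree shift $[-1]$ in the short exact sequence is tracked correctly (so that $H_r$ of the intersection matches $H_{r+1}$ of the pair, hence the index condition $i+j=r+1$), and that the explicit connecting map identified in Proposition \ref{prop:resOfInt} really does become, after $-\otimes k$, the map whose cokernel Lemma \ref{lem:theKoszulHom} describes — in particular that the coefficients $\tfrac{m_{f_1*g_1}}{m_{f_1}}$ become units in $k$ only after reducing squarefree degrees, which is why the squarefree hypothesis is needed here just as in the lemma. A secondary subtlety is confirming that the differential $d$ appearing in the corollary's formula is the Koszul (internal) differential on $z_i^I$ and not some artifact of the resolution differential; this is handled by the same translation $H_\bullet(R/I\otimes K)\cong F\otimes k$ that underlies Lemma \ref{lem:theKoszulHom}, under which the boundary map in the resolution $F\dstar G$ corresponds to the Koszul differential applied to the cycle representatives. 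Once these identifications are laid out, the proof is a short diagram chase through the long exact sequence.
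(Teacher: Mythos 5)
Your approach is genuinely different from the paper's. The paper works with the module short exact sequence $0 \to I\cap J \to I\oplus J \to I+J \to 0$, passes to the long exact sequence of Koszul homology, computes the connecting morphism there explicitly, and uses the isomorphism $H_{r}(R/(I\cap J)) \cong H_{r-1}(I\cap J)$ (via $[z]\mapsto[d(z)]$) together with the Leibniz rule to produce the $d$ in the formula. You instead use the short exact sequence of resolutions from Proposition \ref{prop:resOfInt}, tensor with $k$, and identify $H_r((F\dstar G)\otimes k)$ as a cokernel. That cokernel identification (and the resulting dimension match with $\bigoplus_{i+j=r+1,\,i,j\geq 1} H_i(R/I)\otimes_k H_j(R/J)$) is fine, and is arguably cleaner than tracking the module-level connecting map.

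However, there is a genuine gap in your final step. You claim that "tracking a basis element $f*g$ through the connecting homomorphism... shows it corresponds to the boundary of $f*g$ inside $F\dstar G$, which on the Koszul side is the class $[(d(z_i^I)\wedge z_j^J)^{\sq}]$." This does not hold up. After tensoring with $k$, all connecting maps in your long exact sequence vanish (by minimality), so there is no nonzero connecting homomorphism to chase; the identification with the cokernel is just the surjection $(F*G)_{r+1}\otimes k \to (F\dstar G)_r\otimes k$, which sends $\overline{f*g}$ to $\overline{f*g}$, not to a boundary. To get the stated formula you would need an explicit chain map $F\dstar G \to K\otimes R/(I\cap J)$ and a computation of where it sends $f*g$; the algebra isomorphism argument of Lemma \ref{lem:theKoszulHom} concerns $F*G$ and $H_\bullet(R/(I+J))$, and does not transfer to $F\dstar G$ and $H_\bullet(R/(I\cap J))$ (the product on $F\dstar G$ in Proposition \ref{prop:DGAdstar} is different, and in any case involves no differential). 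The Koszul differential $d$ appearing in the corollary's formula is precisely what your argument fails to account for; the paper obtains it through the isomorphism $H_r(R/(I\cap J))\cong H_{r-1}(I\cap J)$, the explicit connecting morphism of the module sequence, and a Leibniz-rule identity, none of which your proposal reproduces. A correct completion along your lines would either reconstruct that chain-level comparison or simply revert to the paper's module-level argument for the last step.
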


\begin{proof}
Let $K$ denote the Koszul complex resolving the residue field $k$. We will instead compute the Koszul homology $H_i (I\cap J)$ and use the isomorphism $H_{i+1} (R/I\cap J) \cong H_i (I \cap J)$ given by sending a cycle $[z]_{R/I \cap J \otimes K} \mapsto [d(z)]_{I \cap J \otimes K}$ (where $d$ denotes the Koszul differential). By Lemma \ref{lem:theKoszulHom}, a basis for the Koszul homology of $I+J$ is represented by all elements of the form
$$\frac{d(z_i^I) \w z_j^J}{(m_{z_i^I} , m_{z_j^J})} + (-1)^i \frac{z_i^I \w d(z_j^J)}{(m_{z_i^I} , m_{z_j^J})}.$$
Using the short exact sequence
$$0 \to I \cap J \to I \oplus J \to I + J \to 0,$$
the connecting morphism is computed as the map
$$\Big[ \frac{d(z_i^I \w z_j^J)}{(m_{z_i^I} , m_{z_j^J})} \Big]_{(I+J) \otimes K} \mapsto \begin{cases}
\Big[ \frac{d(z_i^I) \w d(z_j^J)}{(m_{z_i^I} , m_{z_j^J})} \Big]_{I \cap J \otimes K} & \textrm{if} \ 1 \leq i \leq n-1 \\
0 & \textrm{otherwise}. \\
\end{cases}$$
By the proof of Proposition \ref{prop:golodQtrans}, the map $H_r(I ) \oplus H_r (J) \to H_r (I+J)$ is an injection. This means that the long exact sequence of homology splits into short exact sequences
$$0 \to H_r (I) \oplus H_r(J) \to H_r (I+J) \to H_{n-1} (I \cap J) \to 0.$$
This immediately implies that $H_{n-1} (I \cap J)$ has basis given by all cycles of the form $\Big[ \frac{d(z_i^I) \w d(z_j^J)}{(m_{z_i^I} , m_{z_j^J})} \Big]_{I \cap J \otimes K}$; finally, to conclude the proof observe that the Leibniz rule implies
$$\Big[ \frac{d(d(z_i^I) \w z_j^J)}{(m_{z_i^I} , m_{z_j^J})} \Big]_{I \cap J \otimes K} = (-1)^{i-1} \Big[ \frac{d(z_i^I) \w d(z_j^J)}{(m_{z_i^I} , m_{z_j^J})} \Big]_{I \cap J \otimes K}.$$
\end{proof}

\begin{definition}
Let $I$ be a monomial ideal. Then the Koszul homology $H_\bullet (R/I)$ has \emph{expected form} if it is generated by all elements of the form $\overline{u} e_\sigma$, where $u \in I : (x_{\sigma_1} , \dots , x_{\sigma_\ell})$ (and $|\sigma| = \ell$). 
\end{definition}

\begin{remark}
By \cite{peeva19960} and \cite{gasharov2002resolutions}, the Koszul homology of any $\mfa$-stable monomial ideal has expected form. Likewise, a recent result of Dao and De Stefani (see \cite{dao2020monomial}) shows that any monomial ideal in $k[x,y,z]$ has Koszul homology of expected form.
\end{remark}

\begin{example}[{\cite[Example 2.9]{dao2020monomial}}]
The Koszul homology of $R/I$, where $$I = (xz , xw , yz , yw , x^2 , y^2 , z^2 , w^2 ) \subset k[x,y,z,w],$$ does \emph{not} have expected form. In particular, $x e_{yzw} - y e_{xzw}$ represents a basis element that can not be rewritten in terms of ``expected" generators.
\end{example}

Using the Lemma \ref{lem:theKoszulHom} and Corollary \ref{cor:KoszulforInt}, we find that having Koszul homology of expected form is preserved for both sums and intersections of quasitransverse ideals.

\begin{cor}
Let $I$ and $J$ be quasitransverse squarefree monomial ideals. If the Koszul homology of $R/I$ and $R/J$ has expected form, then the Koszul homology of both $R/(I+J)$ and $R/(I \cap J)$ has expected form.
\end{cor}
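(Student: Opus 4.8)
The plan is to transport expected‑form generators of $H_\bullet(R/I)$ and $H_\bullet(R/J)$ through the rescaled Künneth isomorphisms of Lemma \ref{lem:theKoszulHom} and Corollary \ref{cor:KoszulforInt}. The engine is the following trivial but useful observation: if $\overline{c}\,e_\sigma$ is a single‑monomial‑term Koszul cycle in $K\otimes R/L$ (with $c$ a monomial and $\sigma$ a subset of the variable indices), then applying the Koszul differential yields $\sum_{p\in\sigma}\pm\,\overline{c x_p}\,e_{\sigma\smallsetminus p}$, and since the $e_{\sigma\smallsetminus p}$ lie in distinct summands of $K\otimes R/L$, each $c x_p\in L$; hence $c\in L:(x_p:p\in\sigma)$ and $\overline c\,e_\sigma$ is \emph{already} of expected form. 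So in both cases it suffices to produce a spanning set of the relevant Koszul homology consisting of single‑term cycles. First I would fix $k$‑bases of $H_\bullet(R/I)$ and $H_\bullet(R/J)$ consisting of (classes of) expected‑form cycles $\overline{u}e_\sigma$, $\overline{v}e_\tau$; these span by hypothesis, and since the Koszul homology of a squarefree monomial ideal is concentrated in squarefree multidegrees, the monomials $u,v$ may be taken squarefree with $\supp(u)\cap\sigma=\supp(v)\cap\tau=\varnothing$, exactly as in the proof of Lemma \ref{lem:theKoszulHom}.

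For $R/(I+J)$: by Lemma \ref{lem:theKoszulHom}, $H_r(R/(I+J))$ is spanned by the images of the pairs $[\overline{u}e_\sigma]\otimes[\overline{v}e_\tau]$, and — reading off the well‑definedness computation inside that proof, where the rescaled wedge of two squarefree cycles is again a single squarefree term — this image is (up to sign) $\bigl[\,\overline{\lcm(u,v)}\,e_{\sigma\cup\tau}\bigr]$ when $\sigma\cap\tau=\varnothing$ and is $0$ otherwise; one arranges $\sigma\cap\tau=\varnothing$ using the freedom in the choice of the squarefree representatives. Each such image is therefore a single‑term Koszul cycle for $R/(I+J)$, hence of expected form by the observation above, so $H_\bullet(R/(I+J))$ has expected form.

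For $R/(I\cap J)$: I would run the same strategy through Corollary \ref{cor:KoszulforInt}, using the representatives $z_i^I=\overline{u}e_\sigma$, $z_j^J=\overline{v}e_\tau$ compatible with the minimal resolutions underlying that corollary. Applying the Koszul differential to $z_i^I$ and reducing modulo $I\cap J$, a term $u x_{\sigma_k}e_{\sigma\smallsetminus\sigma_k}$ is automatically in $I$ (as $u\in I:x_{\sigma_k}$) but survives in $I\cap J$ only when also $u x_{\sigma_k}\in J$, and wedging with $\overline{v}e_\tau$ kills every term with $(\sigma\smallsetminus\sigma_k)\cap\tau\neq\varnothing$; when $|\sigma\cap\tau|\ge 2$ the image is $0$ and when $\sigma\cap\tau=\{\sigma_{k_0}\}$ it collapses to a single term $\pm\,\overline{w}\,e_{(\sigma\cup\tau)\smallsetminus\sigma_{k_0}}$, which is of expected form by the observation. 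In the remaining case $\sigma\cap\tau=\varnothing$ one is left with a genuine multi‑term Koszul cycle, and I would show it is homologous to a sum of single‑term expected‑form cycles by organizing its terms according to the splitting $w x_p\in I$ versus $w x_p\in J$ of $w x_p\in I+J$ and using that $w x_p\in I$ together with $w x_q\in J$ forces $w x_p x_q\in I\cap J$; since these span $H_\bullet(R/(I\cap J))$ by Corollary \ref{cor:KoszulforInt}, the conclusion follows.

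The hard part will be precisely this last point in the intersection case: whereas for the sum the image of a product of expected‑form generators is literally a single monomial term, for the intersection it need not be, so one must argue that the resulting multi‑term cycle lies in the span of single‑term expected‑form cycles. This is where care with the choice of representatives is essential and where one has to keep track of how the identity $(I\cap J):x^\sigma=(I:x^\sigma)\cap(J:x^\sigma)$ interacts with the $\lcm$‑rescaling in Corollary \ref{cor:KoszulforInt}; everything else reduces to routine monomial bookkeeping.
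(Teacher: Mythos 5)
Your preliminary observation and the argument for $R/(I+J)$ are both correct: by Lemma \ref{lem:theKoszulHom}, feeding single-term representatives $\overline{u}e_\sigma$, $\overline{v}e_\tau$ into the rescaled K\"unneth map produces a single-term cycle in $K\otimes R/(I+J)$ (or $0$), which is automatically of expected form by exactly the mechanism you describe. Since the paper records no proof of the corollary beyond pointing to Lemma \ref{lem:theKoszulHom} and Corollary \ref{cor:KoszulforInt}, this is presumably the intended argument for the sum.

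The difficulty you flag for $R/(I\cap J)$ when $\sigma\cap\tau=\varnothing$, however, is not a gap in your exposition but a genuine failure of the statement, and your plan to show the multi-term cycle is ``homologous to a sum of single-term expected-form cycles'' cannot be carried out in general. Take $R=k[y,z,b,c]$, $I=(y,z)$, $J=(b,c)$: these are transverse (hence quasitransverse), squarefree, and $H_\bullet(R/I)$, $H_\bullet(R/J)$ are exterior algebras, all of whose classes are of expected form. Here $I\cap J=(yb,yc,zb,zc)$ is the edge ideal of a $4$-cycle; its minimal free resolution has ranks $1,4,4,1$, and $H_3(R/(I\cap J))$ is one-dimensional, concentrated in multidegree $yzbc$. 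Checking the four candidates $\overline{c}e_{yzb},\ \overline{b}e_{yzc},\ \overline{z}e_{ybc},\ \overline{y}e_{zbc}$ shows none is a cycle in $K\otimes R/(I\cap J)$ (in each case exactly one boundary term, e.g.\ $\overline{cb}e_{yz}$, survives modulo $I\cap J$). So there are no expected-form generators in this multidegree at all, while the generator of $H_3$ is the class of the two-term cycle $\overline{y}e_{zbc}-\overline{z}e_{ybc}$ (homologous to $\overline{c}e_{yzb}-\overline{b}e_{yzc}$ via $d(\overline{1}e_{yzbc})$). Hence $H_\bullet(R/(I\cap J))$ does not have expected form, the intersection half of the corollary is false, and the obstruction you correctly identified in the $\sigma\cap\tau=\varnothing$ case is exactly where it fails.
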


Since we have an explicit generating set for the Koszul homology of squarefree quasitransverse ideals, we will be able to construct an explicit trivial Massey operation and reprove the fact that intersections of quasitransverse ideals are Golod; this will also yield an explicit construction of the minimal free resolution of the residue field over the quotient defined by the intersection of a quasitransverse family of ideals (this follow by a result of Golod \cite{golod1962}; see also the construction \cite[Theorem 5.2.2]{avramov1998infinite}. The following proposition, proved in \cite{vandebogert2021Golod}, will allow us to easily construct the desired trivial Massey operation. 

\begin{prop}\label{prop:golodnessLem}
Let $I \subset R$ be any ideal and enumerate a $k$-basis $\cat{B} = \{ [z_a ] \}_{a \in A}$ for the Koszul homology algebra $H_{\geq 1} (R/I)$. Assume that there exists a map $\nu : \{ z_a \mid a \in A \} \to R/I \otimes K $ such that
$$(*) \qquad z_a \w z_b = z_a \w d( \nu (z_b)) \quad \textrm{for all} \ a,  \ b \in A.$$
Then the map $\mu : \coprod_{i=1}^\infty \cat{B}^i \to R/I \otimes K $ defined by
$$\mu ([z_1] , \dots , [z_p] ) := z_1 \w \nu (z_2) \w \cdots \w \nu (z_p)$$
is a trivial Massey operation on $R/I \otimes K $.
\end{prop}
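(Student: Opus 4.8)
The plan is to verify directly that the map $\mu$ defined by iterated wedging against $\nu$ satisfies the two defining axioms of a trivial Massey operation: that $\mu([z_a]) = z_a$ represents $[z_a]$ (which is immediate from the definition, since $\mu$ restricted to $\cat{B}^1$ is just $z_a \mapsto z_a$), and that $d\mu([z_1], \dots, [z_p]) = \sum_{j=1}^{p-1} \overline{\mu([z_1], \dots, [z_j])}\, \mu([z_{j+1}], \dots, [z_p])$. The entire content is in the second identity, and the strategy is a straightforward induction on $p$ using the Leibniz rule for the Koszul differential on the DG-algebra $R/I \otimes K$, together with the hypothesis $(*)$.

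First I would record the base case and set up notation. For $p=1$, $\mu([z_a]) = z_a$ is a cycle, so $d\mu([z_a]) = 0$, matching the empty sum on the right. For the inductive step, write $\mu([z_1], \dots, [z_p]) = z_1 \w \nu(z_2) \w \cdots \w \nu(z_p)$ and apply the graded Leibniz rule termwise. Since $z_1$ is a cycle, $d(z_1 \w \nu(z_2) \w \cdots \w \nu(z_p))$ contributes no $dz_1$ term; the remaining terms are $\pm z_1 \w \nu(z_2) \w \cdots \w d\nu(z_j) \w \cdots \w \nu(z_p)$ for $2 \le j \le p$. Now I would invoke $(*)$ in the form $z_a \w d\nu(z_b) = z_a \w z_b$: the factor $\nu(z_{j-1}) \w d\nu(z_j)$ (or, working leftward, the product of everything to the left of position $j$ wedged with $d\nu(z_j)$) should be replaced so that $d\nu(z_j)$ becomes $z_j$. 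One must be careful here because $(*)$ is stated as $z_a \w z_b = z_a \w d\nu(z_b)$ with a bare cycle $z_a$ on the left, not a general chain, so the replacement has to be applied inductively from the left, peeling off one $\nu$ at a time and using that $\overline{\mu([z_1],\dots,[z_j])} = \overline{z_1 \w \nu(z_2) \w \cdots \w \nu(z_j)}$ is the class of a cycle in homology — which is where the inductive hypothesis (that $\mu([z_1],\dots,[z_j])$ is a cycle modulo the Massey relation, hence its class is well-defined) feeds in. Collecting terms, the $j$-th term becomes $\overline{\mu([z_1],\dots,[z_j])}\,\mu([z_{j+1}],\dots,[z_p])$, and I would track the signs to confirm they come out as $+1$ after accounting for the degree shifts in the Leibniz rule (the sign bookkeeping mirrors the standard verification that the bar-type construction yields a Massey operation).

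The main obstacle I anticipate is precisely this sign-and-homology-class bookkeeping: $(*)$ only lets one replace $d\nu(z_j)$ by $z_j$ when what sits to its left is (the class of) a genuine cycle, so the argument cannot be a naive termwise substitution — it must be organized as a nested induction, at step $p$ using the already-established Massey identity at all smaller values to know that each prefix $\mu([z_1],\dots,[z_j])$ has a well-defined homology class. Once that scaffolding is in place, the computation is a routine application of Leibniz plus $(*)$, and the degrees/signs fall into line exactly as in the classical case (cf. the bar construction in \cite[Section 5.2]{avramov1998infinite}). I would also note the one-line sanity check already observed after the definition of trivial Massey operation: taking $p=2$, the identity reads $d\mu([z_1],[z_2]) = \overline{z_1}\, z_2$, i.e. $d(z_1 \w \nu(z_2)) = z_1 \w d\nu(z_2) = z_1 \w z_2$, which is exactly $(*)$ — so the hypothesis is visibly the $p=2$ instance, and the induction bootstraps from there.
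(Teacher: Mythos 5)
The paper does not reproduce a proof of this proposition --- it cites \cite{vandebogert2021Golod} --- so your argument has to stand on its own. Your overall strategy (check the $p=1$ base case, then expand $d\mu$ by the Leibniz rule and compare term by term with the Massey identity, invoking $(*)$) is the right one, and you correctly flag the one nontrivial step: justifying the replacement of $d\nu(z_j)$ by $z_j$ in the $j$-th Leibniz term when what sits to its left is the chain $z_1 \w \nu(z_2) \w \cdots \w \nu(z_{j-1})$ rather than a bare cycle $z_a$. But your proposed resolution of that step does not work. You appeal to ``the inductive hypothesis (that $\mu([z_1],\dots,[z_j])$ is a cycle modulo the Massey relation, hence its class is well-defined),'' yet $\mu([z_1],\dots,[z_j])$ is \emph{not} a cycle for $j\geq 2$ --- its differential is exactly the nonzero sum on the right-hand side of the Massey identity --- so it has no homology class, and the bar $\overline{\,\cdot\,}$ appearing in that identity is only a sign on a chain, not a passage to homology. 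Knowing the value of $d\mu$ on shorter strings gives you no annihilation statement that would let a general chain be fed into $(*)$, so the ``nested induction, peeling off one $\nu$ at a time'' leaves the gap open.

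The missing ingredient is elementary and makes the argument short: the Koszul algebra $R/I \otimes K$ is graded-commutative. Set $w_j := d\nu(z_j) - z_j$. Then $(*)$ applied with $z_a = z_1$ and $z_b = z_j$ says precisely that $z_1 \w w_j = 0$ for every $j$. The discrepancy between the $j$-th Leibniz term and the corresponding term of the Massey identity is
\[
\pm\, z_1 \w \nu(z_2) \w \cdots \w \nu(z_{j-1}) \w w_j \w \nu(z_{j+1}) \w \cdots \w \nu(z_p),
\]
and graded commutativity lets you slide $w_j$ leftward past each $\nu(z_i)$ (picking up only signs) until it sits next to $z_1$, at which point the factor $z_1 \w w_j = 0$ kills the whole term. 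This validates the substitution $d\nu(z_j) \rightsquigarrow z_j$ in every Leibniz term at once; no nested induction on $p$ is needed, and no homology classes of prefixes ever enter. Once this is in place, what remains of your outline is only the sign bookkeeping against the paper's (unstated) convention for $\overline{\,\cdot\,}$, which is routine.
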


For ease of notation in the below statement, use the notation
$$(z_i^I \w z_j^J)^{\sq} := \frac{z_i^I \w z_j^J}{(m_{z_i^I} , m_{z_j^J})}.$$

\begin{cor}
Let $I$ and $J$ be quasitransverse monomial ideals. Let
\begingroup\allowdisplaybreaks
\begin{align*}
    &\mu : \coprod_{i =1}^\infty \cat{B}^i \to H_{\geq 1} (R/I \cap J), \\
    &\mu([(d(z_{a_1}^I) \w z_{b_1}^J)^{\sq} ], \cdots , [(d(z_{a_p}^I) \w z_{b_p}^J)^{\sq} ] )  \\
     :=& (d(z_{a_1}^I ) \w z_{b_1}^J)^{\sq} \w (z_{a_2}^I \w z_{b_2}^J)^{\sq} \w \cdots \w (z_{a_p}^I \w z_{b_p}^J)^{\sq}. \\
\end{align*}
\endgroup
Then $\mu$ is a trivial Massey operation on $R/I \cap J \otimes_R K $.
\end{cor}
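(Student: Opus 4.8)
The plan is to verify the hypothesis $(*)$ of Proposition~\ref{prop:golodnessLem} for the ideal $I \cap J$, with the enumerated basis of $H_{\geq 1}(R/I \cap J)$ supplied by Corollary~\ref{cor:KoszulforInt} and with the map $\nu$ defined on basis-representing cycles by
\[
\nu\big( (d(z_a^I) \w z_b^J)^{\sq} \big) := (z_a^I \w z_b^J)^{\sq}.
\]
Once $(*)$ is checked, the trivial Massey operation produced by Proposition~\ref{prop:golodnessLem} is exactly the map $\mu$ in the statement, since $\mu([z_1],\dots,[z_p]) = z_1 \w \nu(z_2) \w \cdots \w \nu(z_p)$ unwinds to the stated wedge of squarefree parts. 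So the entire content is the identity
\[
(d(z_a^I) \w z_b^J)^{\sq} \w (d(z_c^I) \w z_d^J)^{\sq} = (d(z_a^I) \w z_b^J)^{\sq} \w d\big( (z_c^I \w z_d^J)^{\sq} \big)
\]
in $R/(I\cap J) \otimes K$, for all basis indices.

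First I would reduce to a computation with the Leibniz rule. Applying $d$ to $(z_c^I \w z_d^J)^{\sq}$ and using that $d$ is a graded derivation (together with the fact, established in the proof of Lemma~\ref{lem:theKoszulHom}, that the squarefree normalization $\tfrac{-}{(m_{z_c^I},m_{z_d^J})}$ is a genuine well-defined operation compatible with $\w$), one gets
\[
d\big( (z_c^I \w z_d^J)^{\sq} \big) = (d(z_c^I) \w z_d^J)^{\sq} + (-1)^{|z_c^I|} (z_c^I \w d(z_d^J))^{\sq}.
\]
Wedging on the left by $(d(z_a^I) \w z_b^J)^{\sq}$, the desired identity becomes the assertion that the extra term
\[
(-1)^{|z_c^I|}\, (d(z_a^I) \w z_b^J)^{\sq} \w (z_c^I \w d(z_d^J))^{\sq}
\]
vanishes in $R/(I\cap J)\otimes K$. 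Now $d(z_a^I)$ and $d(z_d^J)$ are cycles of $I\otimes K$ and $J\otimes K$ respectively, so each monomial coefficient appearing in $d(z_a^I)$ lies in $I$ and each one appearing in $d(z_d^J)$ lies in $J$; after taking squarefree parts and forming the iterated lcm, every coefficient of the displayed element is divisible by an element of $I$ and by an element of $J$. The key point is that this product of coefficients actually lands in $I\cap J$: this is precisely the quasitransverse hypothesis, which via the proof of Proposition~\ref{prop:resOfInt} identifies $H_1(F\dstar G) \cong I\cap J$ through the map $f_1 \dstar g_1 \mapsto m_{f_1*g_1}$, i.e.\ lcm's of the relevant monomial data lie in $I\cap J$. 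Hence the element is zero modulo $I\cap J$, which is exactly what we need.

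The main obstacle is bookkeeping: one must be careful that the various ``squarefree part'' normalizations are applied consistently and that the Leibniz computation above is legitimate at the level of cycles in $R/(I\cap J)\otimes K$ rather than only up to boundary. I would handle this exactly as in Lemma~\ref{lem:theKoszulHom}: transport the computation along the algebra isomorphism $F*G \otimes k \cong H_\bullet(R/(I+J)\otimes K)$ (and its analogue for the intersection coming from the short exact sequence $0 \to I\cap J \to I\oplus J \to I+J \to 0$ used in Corollary~\ref{cor:KoszulforInt}), so that the squarefree wedge $(z^I \w z^J)^{\sq}$ corresponds to the product $f * g$ in $F*G$ and the claimed identities reduce to the already-verified behavior of the DG-algebra product of Theorem~\ref{thm:DGAgenTaylor}. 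With that translation in place, the vanishing of the extra term is a formal consequence of $I \cdot J$-type coefficients lying in $I \cap J$ under quasitransverseness, and Proposition~\ref{prop:golodnessLem} then delivers that $\mu$ is a trivial Massey operation, completing the proof (and, as remarked, reproving Golodness of $R/(I\cap J)$ with an explicit Massey structure).
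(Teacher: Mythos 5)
Your overall approach matches the paper's: define $\nu\big((d(z_i^I)\w z_j^J)^{\sq}\big) := (z_i^I \w z_j^J)^{\sq}$, verify condition $(*)$ of Proposition~\ref{prop:golodnessLem}, and conclude. The paper's proof is just that one line, so you are filling in the verification of $(*)$, and the Leibniz reduction you perform is correct: the only thing to check is that the extra term $\pm\,(d(z_a^I)\w z_b^J)^{\sq}\w(z_c^I\w d(z_d^J))^{\sq}$ vanishes in $R/(I\cap J)\otimes K$.

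However, the reason you give for that vanishing is misattributed. You write that ``this is precisely the quasitransverse hypothesis'' and invoke the identification $H_1(F\dstar G)\cong I\cap J$ from the proof of Proposition~\ref{prop:resOfInt}, but that is not what drives the computation. The vanishing is elementary and holds for \emph{any} pair of ideals: each coefficient of $(d(z_a^I)\w z_b^J)^{\sq}$ is an lcm divisible by a coefficient of $d(z_a^I)$, hence lies in $I$; likewise each coefficient of $(z_c^I\w d(z_d^J))^{\sq}$ lies in $J$; a coefficient of their wedge is a product of one from each, hence lies in $IJ\subseteq I\cap J$, and so is zero in $R/(I\cap J)$. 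Quasitransverseness (together with the squarefree hypothesis, which the corollary's statement actually needs but omits) enters earlier, in Lemma~\ref{lem:theKoszulHom} and Corollary~\ref{cor:KoszulforInt}, to guarantee that the set $\cat{B}$ on which $\mu$ is built really is a basis of $H_{\geq 1}(R/(I\cap J))$; it plays no role in the coefficient-vanishing step you flag as the ``key point.'' Similarly, the proposed detour through the algebra isomorphism of Theorem~\ref{thm:DGAgenTaylor} is unnecessary here; the direct coefficient check suffices.
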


\begin{proof}
Define $\nu ( (d(z_i^I) \w z_j^J)^{\sq} ) := (z_i^I \w z_j^J)^{\sq}$. Then $\nu$ satisfies the condition $(*)$ of Proposition \ref{prop:golodnessLem}, whence the result.
\end{proof}

\section{Vanishing of Avramov Obstructions for Intersections of Quasitransverse Ideals}\label{sec:obstructions}

In this section, we conduct a further study of the complex $F^1 \dstar \cdots \dstar F^r$ introduced in Definition \ref{def:resOfInt}. We use an algebra structure constructed by Geller \cite{geller2021DG} to prove that $F^1 \dstar \cdots \dstar F^r$ admits the structure of an associative DG-algebra under suitable hypotheses on the complexes $F^1 , \dots , F^r$. We then prove that, under rather mild hypotheses, the complex $F^1 \dstar \cdots \dstar F^r$ will admit a DG-module structure over the exterior algebra $\bigwedge^\bullet (F^1 \dstar \cdots \dstar F^r)_1$; this shows that the Avramov obstructions of Definition \ref{def:avramovobstructions} are always $0$ for quotients of intersections of quasitransverse families of ideals. 

We first begin by establishing the following notation to define the product of Proposition \ref{prop:DGAdstar}; much of this notation comes from \cite{geller2021DG}.

\begin{notation}
Let $\Delta$ denote a simplicial complex. For ease of notation, the vertices of $\Delta$ will be identified with their respective indices for any choice of enumeration. This allows one to order the verties with respect to the natural ordering on the integers $\bbn$. 

 If $\Omega \in \Delta$ is a face of $\Delta$, then vertices of $\Omega$ will be denoted by the corresponding lower-case Greek letter. For instance, vertices of $\Omega \in \Delta$ are denoted $\omega_i$ and vertices of $\Gamma \in \Delta$ are denoted $\gamma_i$, where $i$ denotes the position in $\Omega$ (or $\Gamma$) with respect to the order given above. 
\end{notation}

\begin{definition}
Let $P$ denote a statement that can be evaluated as \texttt{true} or \texttt{false}. The \emph{indicator function} $1_P$ corresponding to $P$ has values
$$1_P = \begin{cases}
1 & \textrm{if} \ P \ \textrm{is true} \\
0 & \textrm{if} \ P \ \textrm{is false.} \\
\end{cases}$$
\end{definition}

\begin{definition}
A multigraded complex $G $ is \emph{supported on a simplicial complex} $\Delta$ (or just \emph{simplicial}) if all basis elements of $G $ are indexed by the faces of $\Delta$ and the differentials of $G $ are induced by the topological differentials on $\Delta$. Given a basis element $g_{\Omega} \in G $ (where $\Omega \in \Delta$ and $|g_\Omega| = |\Omega|$), the differential takes the form
$$d^G (g_\Omega) = \sum_{\omega \in \Omega} \epsilon (\omega) \frac{m_{\Omega}}{m_{\Omega \backslash \omega}} g_{\Omega \backslash \omega},$$
where $\epsilon (\omega)$ is shorthand for the incidence function $\epsilon (\Omega \backslash \omega , \Omega)$. Given $G $ a simplicial complex as above, let $P_\ell : G_i \to G_{i-1}$ to be the map defined on basis vectors via
$$P_\ell (g_\Omega ) := \proj_{g_{\Omega \backslash \omega_\ell}} (d^G (g_{\Omega} )) = \epsilon (\omega_\ell) \frac{m_{\Omega}}{m_{\Omega \backslash \omega_\ell}} g_{\Omega \backslash \omega_\ell},$$
where $\omega_\ell$ is understood to be the $\ell$th element of $\Omega$.

If $G $ admits the structure of a multigraded DG-algebra, then the multiplication is \emph{simplicial} if
$$g_{\Omega} \cdot_G g_{\Gamma} = \lambda g_{\Omega \cup \Gamma}, \quad \textrm{for some} \ \lambda \in k.$$
\end{definition}

Notice that the following proposition has a quick proof only because the hard work has already been done in the construction of the DG-algebra structure on $\st (F,G)$ established under suitable hypotheses by Geller \cite{geller2021DG}; this means that we only need to invoke Lemma \ref{lem:DGAafterLocal} to deduce the result.

\begin{prop}\label{prop:DGAdstar}
Let $I$ and $J$ be monomial ideals. Let $F $ and $G $ be multigraded DG-algebra resolutions of $R/I$ and $R/J$, respectively. Assume that $G $ is a simplicial resolution supported on $\Delta$ with simplicial multiplication. Then $F \dstar G$ admits the structure of a multigraded DG-algebra with product:
\begingroup\allowdisplaybreaks
\begin{align*}
    (f * g_{\Omega} ) \cdot_{F \dstar G} (f' * g_{\Gamma}) &:= 1_{[\omega_1 \leq \gamma_1 < \omega_2]} (-1)^{(|\Omega|-1)(|f'|-1)} (f * P_1 (f_{\Omega})) \cdot_{F * G} (f' * g_{\Gamma}) \\
    &- 1_{[\omega_1 < \gamma_1]} 1_{[|f'|=1]} (f * g_{\Omega}) \cdot_{F*G} (d^F (f') * g_{\Gamma} ) \\
    & 1_{[\gamma_1 < \omega_1 < \gamma_2]} (-1)^{(|\Omega| -1)|f'|} (f * g_{\Omega} ) \cdot_{F*G} (f' * P_1 (g_\Gamma) ) \\
    &- 1_{[\gamma_1 < \omega_1]} 1_{[|f|=1]} (-1)^{|\Omega| (|f'|-1)} (d^F (f) * g_\Omega) \cdot_{F*G} (f' * g_\Gamma). \\
\end{align*}
\endgroup
\end{prop}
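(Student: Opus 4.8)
The plan is to invoke Lemma~\ref{lem:DGAafterLocal} with the natural comparison map relating $\st(F,G)$ to $F\dstar G$, exactly as was done for the generalized Taylor complex in Theorem~\ref{thm:DGAgenTaylor} and for Proposition~\ref{prop:DGAdstar}'s sibling statements. Concretely, let $S$ denote the multiplicative set generated by the variables $x_1,\dots,x_n$, and recall from the discussion after Definition~\ref{def:resOfInt} that $F\dstar G$ localizes at $S$ to $\st(F,G)$; in fact there is a degreewise-monomial rescaling map $\phi\colon \st(F,G)\to F\dstar G$ (built in each homological degree $\geq 2$ from the rescaling $F_{\geq 1}\otimes G_{\geq 1}\to F_{\geq 1}*G_{\geq 1}$ of Theorem~\ref{thm:DGAgenTaylor}, and the identity in degrees $0,1$) which becomes an isomorphism after applying $S^{-1}(-)$. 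By Geller's theorem~\cite{geller2021DG}, under the stated hypothesis that $G$ is a simplicial multigraded DG-resolution with simplicial multiplication, the complex $\st(F,G)$ carries an associative multigraded DG-algebra structure. Thus, by Lemma~\ref{lem:DGAafterLocal}, it suffices to check that $\phi$ is a morphism of (not necessarily DG) algebras when $F\dstar G$ is given the product in the statement; the Leibniz rule, associativity, graded-commutativity, and $x^2=0$ for odd $x$ then transfer automatically to $F\dstar G$ because it is a complex of free (hence torsion-free) $R$-modules.

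The one genuine task, then, is to verify the compatibility $\phi(u\cdot_{\st(F,G)} v)=\phi(u)\cdot_{F\dstar G}\phi(v)$ on basis elements. First I would recall Geller's formula for the product on $\st(F,G)$: it is supported on the appropriate join/product of simplicial data and involves precisely the indicator functions $1_{[\omega_1\leq\gamma_1<\omega_2]}$, $1_{[\gamma_1<\omega_1<\gamma_2]}$, the ``drop the first vertex'' projections $P_1$, and the boundary terms $d^F$ that appear verbatim in the displayed product. The product written in the statement is, by design, the image of Geller's product under the rescaling $\phi$ composed with the passage through $F_{\geq 1}*G_{\geq 1}$; so the verification amounts to bookkeeping that the monomial rescaling factors $(m_{f_1},\dots)$ introduced by $\phi$ combine correctly on both sides, together with tracking the Koszul signs $(-1)^{(|\Omega|-1)(|f'|-1)}$, $(-1)^{(|\Omega|-1)|f'|}$, $(-1)^{|\Omega|(|f'|-1)}$ that arise from re-indexing the simplicial faces and from commuting the degree-shift in the $\dstar$-construction past the tensor factors. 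I would do this by splitting into the four cases recorded by the indicator functions, and in each case reducing to the already-established compatibility of $\phi$ with the product on $F*G$ (Theorem~\ref{thm:DGAgenTaylor}) together with the definition of $P_1$ as a component of $d^G$.

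The main obstacle I anticipate is purely notational rather than conceptual: matching the sign and index conventions of Geller~\cite{geller2021DG} for $\st(F,G)$ to the conventions of Definition~\ref{def:resOfInt} and Theorem~\ref{thm:DGAgenTaylor} used here, since the $\dstar$-complex is $(F_{\geq 1}*G_{\geq 1})$ shifted, so homological degree $n$ in $F\dstar G$ corresponds to degree $n+1$ in $F*G$, and the product must be the ``shifted'' product, which is what forces the extra signs depending on $|\Omega|$. Once the dictionary between the two shift conventions is pinned down, each of the four cases is a one-line check. I would therefore organize the proof as: (1) state the rescaling map $\phi$ and observe $S^{-1}\phi$ is an isomorphism; (2) cite Geller for the DG-structure on $\st(F,G)$; (3) verify $\phi$ is an algebra map by the four-case computation, reducing to Theorem~\ref{thm:DGAgenTaylor}; (4) apply Lemma~\ref{lem:DGAafterLocal} to conclude, noting multigradedness is immediate since every operation in sight ($\phi$, $P_1$, $d^F$, the product on $F*G$) is multigraded.
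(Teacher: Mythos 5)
Your proposal matches the paper's proof in all essentials: define the rescaling map $\phi\colon \st(F,G)\to F\dstar G$, note it localizes to an isomorphism, cite Geller's theorem for the DG-algebra structure on $\st(F,G)$, verify $\phi$ is a morphism of algebras by reducing to the compatibility already established for $\cdot_{F*G}$ in Theorem~\ref{thm:DGAgenTaylor}, and conclude via Lemma~\ref{lem:DGAafterLocal}. The only cosmetic difference is that the paper dispenses with your anticipated four-case bookkeeping by observing directly that, since $\cdot_{F\dstar G}$ is defined as a linear combination of $\cdot_{F*G}$-products (and Geller's product on $\st(F,G)$ is the corresponding combination of $\cdot_{F\otimes G}$-products), the algebra-morphism property of $\phi$ is inherited termwise from Theorem~\ref{thm:DGAgenTaylor} with no additional sign-checking needed.
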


\begin{proof}
The proof of this theorem is immediate from Theorem $1.1$ of \cite{geller2021DG}. Observe that there is a monomorphism of complexes
\begingroup\allowdisplaybreaks
\begin{align*}
    \phi: \st (F , G) & \to F \dstar G \\
    f \otimes g &\mapsto (m_f , m_g) f * g. 
\end{align*}
\endgroup
By the proof of Theorem \ref{thm:DGAgenTaylor}, the map $\phi$ is a morphism of algebras with respect to the products $\cdot_{F \otimes G}$ and $\cdot_{F*G}$. Since $\cdot_{F \dstar G}$ is defined in terms of linear combinations of $\cdot_{F*G}$, one immediately finds that $\phi$ is a morphism of algebras when $\cdot_{\st (F,G)}$ is chosen to be the product of \cite[Theorem 1.1]{geller2021DG}. By Lemma \ref{lem:DGAafterLocal}, the result follows. 
\end{proof}

\begin{remark}
Let $I_1 , \dots , I_r$ be a collection of monomial ideals and $T^i $ the Taylor resolution on $I_i$ for each $1 \leq i \leq r$. Then, Proposition \ref{prop:DGAdstar} can be iterated to show that 
$$T^1 \dstar \cdots \dstar T^r$$
admits the structure of an associative DG-algebra. 
\end{remark}

Next, we adopt the following setup to prove that the associated Avramov obstructions are trivial for general intersections of quasitransverse monomial ideals.

\begin{setup}\label{setup:obstructionsetup}
Let $I_1 , \dots , I_r$ denote a family of monomial ideals. Let $(F^i, d^i)$ denote a free resolution of $R/I_i$ for each $i=1 , \dots , n$. Assume that for each $i$ and $j$, there exists a product $\cdot : F_1^i \otimes F_j^i \to F_{j+1}^i$ satisfying
\begin{enumerate}[(a)]
    \item $d_{j+1}^i (f_1^i \cdot f_j^i ) = d_1^i (f_1^i) f_j^i - f_1^i \cdot d_j (f_j^i)$, and
    \item $f_1^i \cdot (f_1^i \cdot f_j^i) = 0$, where $f_j^i \in F_j^i$.
\end{enumerate}
\end{setup}

\begin{remark}
If $I_1 , \dots , I_r$ is a family of monomial ideals, each of which admits a minimal free resolution with the structure of an associative DG algebra, then the hypotheses of Setup \ref{setup:obstructionsetup} are satisfied by Definition \ref{def:dga}.
\end{remark}

The following theorem is a consequence of the computation done in \cite[Theorem 5.5]{vandebogert2020vanishing}.

\begin{theorem}\label{thm:Dgmodules}
Adopt notation and hypotheses as in Setup \ref{setup:obstructionsetup}. Let $S  := F^1 \dstar \cdots \dstar F^r$. Then there exists a product
$$S_1 \otimes S_j \to S_{j+1}$$
satisfying
\begin{enumerate}[(a)]
    \item $d_{j+1}^i (f_1^i \cdot f_j^i ) = d_1^i (f_1^i) f_j^i - f_1^i \cdot d_j (f_j^i)$, and
    \item $f_1^i \cdot (f_1^i \cdot f_j^i) = 0$.
\end{enumerate}
\end{theorem}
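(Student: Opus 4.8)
The plan is to reduce Theorem \ref{thm:Dgmodules} to the already-established computation of \cite[Theorem 5.5]{vandebogert2020vanishing}, using Proposition \ref{prop:resOfInt} and the localization trick of Lemma \ref{lem:DGAafterLocal} to transport the structure from $\st(F^1,\dots,F^r)$ to $F^1 \dstar \cdots \dstar F^r$. First I would recall that, after localizing at the multiplicative set $S$ generated by the variables, the monomorphism of complexes $\phi \colon \st(F^1,\dots,F^r) \to F^1 \dstar \cdots \dstar F^r$ becomes an isomorphism, exactly as in the proof of Proposition \ref{prop:resOfInt} and Proposition \ref{prop:DGAdstar}; this is the mechanism by which a product on the star product transfers to $\dstar$.

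Next I would invoke \cite[Theorem 5.5]{vandebogert2020vanishing}, which under the hypotheses of Setup \ref{setup:obstructionsetup} furnishes a product $\st(F^1,\dots,F^r)_1 \otimes \st(F^1,\dots,F^r)_j \to \st(F^1,\dots,F^r)_{j+1}$ satisfying the Leibniz rule (a) and the squaring identity (b). The key step is then to define the product on $S = F^1 \dstar \cdots \dstar F^r$ by conjugating through $S^{-1}\phi$: for $f_1 \in S_1$ and $f_j \in S_j$, set $f_1 \cdot f_j := \phi\big( (S^{-1}\phi)^{-1}(f_1) \cdot (S^{-1}\phi)^{-1}(f_j) \big)$, and then check that this lands back in $S$ rather than merely in $S^{-1}S$. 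This integrality check is where care is needed: one argues as in Lemma \ref{lem:DGAafterLocal} that the ambiguity is a torsion element killed by some $s \in S$, and since $S$ is a complex of free (hence torsion-free) $R$-modules in each degree, the product is honestly defined over $R$. Because $\phi$ is a morphism of complexes and (a) holds on the star product, the Leibniz identity (a) passes to $S$ by the same localize-and-clear-denominators argument; similarly (b) passes because it is an equation of elements of a free module that holds after localization.

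The main obstacle I anticipate is \emph{not} the abstract transfer — that is essentially formal once $S^{-1}\phi$ is known to be an isomorphism — but rather making sure the hypotheses of \cite[Theorem 5.5]{vandebogert2020vanishing} genuinely match Setup \ref{setup:obstructionsetup}, i.e.\ that the cited result is stated (or can be read off) for the star product $\st$ of an arbitrary family $F^1,\dots,F^r$ with only the partial-product hypotheses (a), (b) on each factor, rather than requiring full associative DG-algebra structures. If the cited theorem is stated only for two factors or only for Taylor resolutions, the remaining work is an induction on $r$: having equipped $F^1 \dstar \cdots \dstar F^{r-1}$ with a product satisfying (a) and (b), one applies the two-factor case to $(F^1 \dstar \cdots \dstar F^{r-1}) \dstar F^r$, using associativity of $\dstar$ up to the canonical isomorphisms. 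I would also double-check the degree bookkeeping — the complex $F\dstar G$ is built from $(F_{\geq 1} * G_{\geq 1})_{n+1}$ in degrees $n \geq 2$ with a separate degree-$1$ piece — so that the source and target of $S_1 \otimes S_j \to S_{j+1}$ correspond under $\phi$ to the expected graded pieces of the star product, and the sign conventions inherited from $\st$ are consistent with the shift. Once these matching issues are settled, the proof is short.
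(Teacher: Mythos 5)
Your overall strategy matches the paper's: reduce to the two-factor case by induction, use the comparison map $\phi\colon\st(F,G)\to F\dstar G$, invoke \cite[Theorem~5.5]{vandebogert2020vanishing} for the product on $\st(F,G)$, and appeal to the localization argument of Lemma~\ref{lem:DGAafterLocal} to verify properties (a) and (b). But there is a genuine gap at the integrality step. You propose to \emph{define} the product on $S$ by conjugation,
$$f_1\cdot f_j := \phi\bigl((S^{-1}\phi)^{-1}(f_1)\cdot(S^{-1}\phi)^{-1}(f_j)\bigr),$$
and then argue that this element, a priori in $S^{-1}S_{j+1}$, actually lies in $S_{j+1}$ because $S_{j+1}$ is free hence torsion-free. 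That reasoning does not hold: torsion-freeness tells you that if $s\cdot x=0$ for $s$ a nonzerodivisor then $x=0$, which is exactly what Lemma~\ref{lem:DGAafterLocal} uses to verify an \emph{identity} between two elements already known to live in the free module. It does not show that an abstract element of $S^{-1}S_{j+1}$ has trivial denominators. Since $(S^{-1}\phi)^{-1}$ genuinely introduces denominators of the form $(m_f,m_g)^{-1}$, the cancellation required to land back in $S$ is a computational fact, not a formal consequence of torsion-freeness.

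The paper avoids this by reversing the order of operations: it first writes down an explicit, manifestly denominator-free formula for the product on $F\dstar G$ in terms of $\cdot_{F*G}$ (the product of Theorem~\ref{thm:DGAgenTaylor}), namely
$$(f_1 * g_1)\cdot(f_a * g_b) := (-1)^a\, (d^F_1(f_1) * g_1)\cdot_{F*G}(f_a * g_b) + 1_{[b=1]}\,(f_1 * g_1)\cdot_{F*G}(f_a * d_b^G(g_b)),$$
then checks that $\phi$ intertwines this with the product on $\st(F,G)$, and only \emph{then} invokes the Lemma~\ref{lem:DGAafterLocal}-style argument to transfer properties (a) and (b). So the localization trick is used to verify identities, never to produce an element of $S$ out of thin air. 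To repair your proposal you would need to carry out the conjugation computation explicitly and observe that the denominators cancel — which amounts to rediscovering the formula above — rather than appeal to torsion-freeness. Your remaining concerns (matching the hypotheses of \cite[Theorem~5.5]{vandebogert2020vanishing} with Setup~\ref{setup:obstructionsetup}, and the degree bookkeeping for $\dstar$) are legitimate but minor; the paper handles both implicitly by working with the two-factor case and iterating.
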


\begin{proof}
By induction it suffices to consider the case that $I$ and $J$ are monomial ideals with $F $, $G $ free resolutions of $R/I$ and $R/J$, respectively. For ease of notation, elements of $F_i$ will denoted denoted $f_i$ and elements of $G_j$ will be denoted $g_j$. Define
$$(f_1 \otimes g_1) \cdot (f_a \otimes g_b ) := (-1)^a (d^F_1 (f_1) * g_1) \cdot_{F*G} (f_a *  g_b) +  1_{[b=1]} (f_1 * g_1) \cdot_{F*G} (f_a * d_b^G(g_b)). $$
By construction, the map $\phi : \st (F,G) \to F \dstar G$ satisfies
$$\phi \big( (f_1 \otimes g_1) \cdot_{\st (F,d)} (f_a \otimes f_b) \big) = \phi (f_1 \otimes g_1) \cdot_{F \dstar G} \phi (f_a \otimes g_b),$$
where $\cdot_{\st (F,G)}$ is the product defined in Theorem $5.5$ of \cite{vandebogert2020vanishing}. By the proof of Lemma \ref{lem:DGAafterLocal}, the result follows.
\end{proof}

\begin{cor}\label{cor:injTor}
Adopt notation and hypotheses as in Setup \ref{setup:obstructionsetup} and define $J := I_1 \cap \cdots \cap I_r$. If $\mfa \subset J$ is any complete intersection, then the induced map
$$\tor_i^R (R/J,k) \to \tor^S_i (R/J , k)$$
is injective for all $i \geq 2$, where $S = R/\mfa$. 
\end{cor}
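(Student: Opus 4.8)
The plan is to deduce Corollary~\ref{cor:injTor} directly from Theorem~\ref{thm:Dgmodules} together with the vanishing of the Avramov obstructions, exactly as Proposition~\ref{prop:vanishofObs} suggests but run in reverse. First I would record that $S := F^1 \dstar \cdots \dstar F^r$ is, by the iterated version of Proposition~\ref{prop:resOfInt}, a free resolution of $R/J$ where $J = I_1 \cap \cdots \cap I_r$; it need not be minimal, but that is irrelevant for computing $\tor$. The product $S_1 \otimes S_j \to S_{j+1}$ from Theorem~\ref{thm:Dgmodules} satisfies the Leibniz rule (a) and the square-zero condition (b), so it extends multiplicatively to a DG-module action of the exterior algebra $\Lambda := \bigwedge^\bullet S_1$ on $S$: condition (b) is precisely what is needed for the map $\Lambda \otimes_R S \to S$ to be well-defined, and (a) gives the Leibniz compatibility with differentials. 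Since $\Lambda$ is the Koszul complex on the generators of $\mfa$ (after identifying $S_1$ with a free module on generators corresponding to a chosen complete intersection $\mfa \subseteq J$ — here one uses that $\mfa$ complete intersection means $\Lambda$ resolves $R/\mfa$), we get a DG-module structure of the minimal $R$-free resolution of $R/\mfa$ on the resolution $S$ of $R/J$.

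Next I would invoke the obstruction machinery. With $f : R \to R/\mfa =: S$ the quotient map (note the clash: the ring $R/\mfa$ is called $S$ in the statement of the corollary, whereas the complex is also called $S$ — I would rename the ring to avoid confusion, say write $S' := R/\mfa$), Theorem~\ref{thm:obstructions} says that the existence of a DG-module structure over the minimal $R$-free resolution of $R/\mfa$ on a (minimal) $R$-free resolution of $R/J$ forces the Avramov obstruction $o^f(R/J)$ to vanish. More precisely, because $\mfa$ is generated by a regular sequence, $o^f$ decomposes into graded pieces $o_i^f(R/J) = \ker\big(\tor_i^R(R/J,k)/(\tor_1^R(S',k)\cdot \tor_{i-1}^R(R/J,k)) \to \tor_i^{S'}(R/J,k)\big)$, and the DG-module structure on the (possibly non-minimal, hence one first passes to a minimal subresolution or uses that the existence of a DG-module structure descends to the minimal model) resolution kills all $o_i^f(R/J)$. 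The key point is that vanishing of $o_i^f(R/J)$ says exactly that the natural map $\tor_i^R(R/J,k)/(\tor_1^R(S',k)\cdot\tor_{i-1}^R(R/J,k)) \to \tor_i^{S'}(R/J,k)$ is injective; since $\tor_1^R(S',k)\cdot \tor_{i-1}^R(R/J,k) = 0$ whenever this product lands in degrees where it must vanish — and here is where the Golodness from Section~\ref{sec:KoszulHom} enters: by Proposition~\ref{prop:golodQtrans}, iterated, $R/J$ is Golod, so $\tor_+^R(S',k)\cdot \tor_\bullet^R(R/J,k) = 0$ — the quotient is just $\tor_i^R(R/J,k)$ itself, and we conclude $\tor_i^R(R/J,k) \to \tor_i^{S'}(R/J,k)$ is injective for $i \geq 2$. (For $i=1$ one does not expect injectivity since $\tor_1^R(S',k)$ maps to zero, which is why the statement restricts to $i \geq 2$.)

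Carrying this out, the steps in order are: (1) rename the ring $R/\mfa$ to $S'$ and fix a regular sequence generating $\mfa$; (2) apply Theorem~\ref{thm:Dgmodules} with the $F^i$ taken to be DG-algebra (or at least ``(a)+(b)'') minimal free resolutions of $R/I_i$, to get the product on $S = F^1 \dstar \cdots \dstar F^r$; (3) observe that (a)+(b) upgrade this product to a genuine DG-module action of $\Lambda^\bullet S_1 \cong$ Koszul complex on $\mfa$, i.e.\ over the minimal $R$-free resolution of $S'$; (4) invoke Theorem~\ref{thm:obstructions} / Proposition~\ref{prop:vanishofObs}-type reasoning to conclude $o_i^f(R/J) = 0$ for all $i > 0$; (5) use that $R/J$ is Golod (Proposition~\ref{prop:golodQtrans} iterated) so that $\tor_+^R(S',k)\cdot \tor_\bullet^R(R/J,k) = 0$, hence the obstruction kernel is literally $\ker(\tor_i^R(R/J,k)\to\tor_i^{S'}(R/J,k))$; (6) conclude injectivity for $i \geq 2$.

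The main obstacle, I expect, is step~(3)–(4): one must be careful that the complex $S$ need not be the \emph{minimal} free resolution of $R/J$, whereas Theorem~\ref{thm:obstructions} is phrased for minimal resolutions. The clean fix is that a DG-module structure over the minimal resolution $\Lambda$ of $R/\mfa$ on \emph{any} free resolution of $R/J$ induces one on the minimal one (split off the trivial summands compatibly, or homotopy-transfer the module structure), so passing to $S$'s minimal subcomplex is harmless; alternatively, one works at the level of $\tor$ directly, noting that $S \otimes_R k$ computes $\tor^R_\bullet(R/J,k)$ and the $\Lambda\otimes k$-module structure on it, which is all Theorem~\ref{thm:obstructions}'s proof actually uses. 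A secondary subtlety is making sure the hypotheses of Setup~\ref{setup:obstructionsetup} are genuinely available — i.e.\ that each $R/I_i$ has a free resolution with a product satisfying (a) and (b); for the corollary as intended one takes the $I_i$ quasitransverse with DG-algebra minimal resolutions (or just Taylor resolutions, which always work), so this is not a real obstruction, but it should be stated explicitly.
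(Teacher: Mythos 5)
Your route is exactly the paper's: use Theorem~\ref{thm:Dgmodules} to force $o_i^f(R/J)=0$, then use Proposition~\ref{prop:golodQtrans} (iterated) for Golodness so that $\tor_1^R(S,k)\cdot\tor_{i-1}^R(R/J,k)=0$ for $i\geq 2$, whence the obstruction kernel reduces to $\ker\bigl(\tor_i^R(R/J,k)\to\tor_i^S(R/J,k)\bigr)$ and injectivity follows. Your worries about minimality of the complex $S$ and about quasitransverseness being needed to invoke Proposition~\ref{prop:golodQtrans} are both valid and both tacit in the paper's statement.

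One step in your outline fails as literally written, though the conclusion you draw from it is correct. In step~(3) you assert that $\Lambda := \bigwedge^\bullet S_1$ is the Koszul complex on the generators of $\mfa$ after ``identifying $S_1$ with a free module on generators corresponding to a chosen complete intersection $\mfa\subseteq J$.'' No such identification exists: $S_1$ is free of rank equal to the number of minimal generators of $J=I_1\cap\cdots\cap I_r$, whereas $\mfa$ is an arbitrary (in general much smaller) complete intersection inside $J$. So $\bigwedge^\bullet S_1$ is the Koszul complex on a generating set of $J$, not on $\mfa$, and it need not resolve $R/\mfa$. The repair is standard: writing a regular sequence generating $\mfa$ in terms of generators of $J$ gives a chain map of DG-algebras $K^{\mfa}\to\bigwedge^\bullet S_1$, and restricting along it the $\bigwedge^\bullet S_1$-module structure that conditions (a) and (b) of Theorem~\ref{thm:Dgmodules} produce yields the DG $K^{\mfa}$-module structure on $S$, i.e.\ over the minimal $R$-free resolution of $R/\mfa$, which is what the contrapositive of Theorem~\ref{thm:obstructions} needs. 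With that correction inserted, the remainder of your argument matches the paper's compressed proof.
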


\begin{proof}
By Proposition \ref{prop:golodQtrans}, one has
$$\tor_1^R (S , k ) \cdot \tor_{i-1} (R/J , k) = 0$$
for all $i \geq 2$. The statement of the corollary is then a rephrasing of the fact that $o_i^f (R/J) = 0$, where $f : R \to S$ is the natural quotient.
\end{proof}

\bibliographystyle{amsplain}
\bibliography{biblio}
\addcontentsline{toc}{section}{Bibliography}

\end{document}